\pdfoutput=1
\documentclass[11pt]{amsart}
\usepackage{amsmath,amssymb,mathtools}
\usepackage[margin=1.05in]{geometry}
\usepackage[hypertexnames=false]{hyperref}
\usepackage{booktabs}
\usepackage{longtable}
\usepackage{array}
\usepackage{siunitx}
\usepackage{algorithm,algpseudocode}
\usepackage{graphicx}
\graphicspath{{figures/}}

\newtheorem{theorem}{Theorem}[section]
\newtheorem{lemma}[theorem]{Lemma}
\newtheorem{proposition}[theorem]{Proposition}
\newtheorem{corollary}[theorem]{Corollary}
\theoremstyle{definition}
\newtheorem{definition}[theorem]{Definition}
\newtheorem{remark}[theorem]{Remark}

\newcommand{\RR}{\mathbb{R}}
\newcommand{\QQ}{\mathbb{Q}}
\newcommand{\CC}{\mathbb{C}}
\newcommand{\ZZ}{\mathbb{Z}}
\newcommand{\NN}{\mathbb{N}}
\newcommand{\cO}{\mathcal{O}}
\newcommand{\rg}{\operatorname{rg}}
\newcommand{\Id}{\mathrm{I}}
\newcommand{\brem}{\mathbin{\%}}
\newcommand{\RN}[1]{\textup{\uppercase\expandafter{\romannumeral#1}}}
\newcommand{\nR}{R^n}
\newcommand{\nm}[1]{{#1}^n_{m}}
\newcommand{\npmz}[2]{{#1}^n_{(#2)}}
\newcommand{\cR}{\mathcal{R}}
\newcommand{\Sone}{\npmz{S}{1}}
\DeclareMathOperator{\blockdiag}{blockdiag}

\title[Explicit oblique inverses of the ADRT]{Explicit oblique inverses of the approximate discrete Radon transform}
\author{Donsub Rim}
\address{Department of Mathematics, Washington University in St.~Louis, St.~Louis, MO 63130, USA}
\email{rim@wustl.edu}
\date{\today}

\begin{document}
\begin{abstract}
The approximate discrete Radon transform (ADRT) is a fast, hierarchical
approximation of the Radon transform whose levels have explicit spectral
decompositions, but its Moore--Penrose inverse, needed for data outside the
range, has no closed form.  We construct explicit left inverses of the ADRT,
oblique projections onto its range, that are exact for every choice of their
parameters and amplify noise by a constant factor relative to the
Moore--Penrose inverse.  Realized with polynomial filters on the butterfly
structure of the levels, they are fixed linear maps of cost
$\cO(N^{5/2}\log^{3/2}N/\sigma_{\min})$ for an $N\times N$ image, where
$\sigma_{\min}$ is the smallest singular value of the ADRT; with the
measured growth $\sigma_{\min}\sim N^{1/4}$ this is $\cO(N^{2.25}\log^{3/2}N)$.
A hybrid that
keeps the explicit inverse of the cross-quadrant level is cheaper and closer to
the Moore--Penrose inverse.  In compiled implementations both outperform a
polynomial solver of the normal equations from $N=256$ on, and their
conditioning is traced to the aliasing of integer-sampled lines.
\end{abstract}
\maketitle

\section{Introduction}

The ADRT \cite{Gotz96,Brady98} approximates the continuous Radon transform
\cite{Natterer} by sums of pixel values over \emph{digital lines}, and it is
fast: $\cO(N^2\log N)$ operations for an $N\times N$ image, $N=2^n$ (here and
throughout $\log$ is the logarithm of base two).  The single-quadrant ADRT has
an exact and fast inverse \cite{Rim20}, its range over square images is
characterized by $N(N-1)/2$ linear constraints \cite{LRR}, and each level of
the full ADRT has an explicit spectral decomposition which yields the explicit
inverse SPIFE of complexity $\cO(N^2\log^2N)$ for data in the range
\cite{LORR}.  For general data the problem of interest is
\begin{equation}\label{eq:prob}
\min_{x\in\RR^{N\times N}}\lVert\nR[x]-b\rVert ,\qquad
b\in\RR^{4\times N\times(\frac32N-\frac12)},
\end{equation}
whose solution $(\nR)^\dagger b$ is computed iteratively (CG, or the multigrid
of Press \cite{Press06}; see \cite{LORR,Otness23}), and \cite{LORR} lists as
open the construction of an explicit fast projection to the range.

\subsubsection*{Relation to earlier work.}  This paper continues two earlier
studies of the ADRT.  The first \cite{LRR} characterizes the range of the
single-quadrant ADRT over square images: a datum lies in the range exactly
when it satisfies $N(N-1)/2$ explicit linear constraints, mass conditions
between neighbouring slopes and support conditions on the offsets, derived
level by level from the arithmetic of digital lines; out of range, the exact
inverse of \cite{Rim20} produces images that grow polynomially in the offset.
Proposition~\ref{prop:pole} below is the Fourier form of that statement, a
pole of order $\log N$ at offset frequency zero, and the polynomial filters
of Section~\ref{sec:poly} are a way of handling it without the growth.  The
second \cite{LORR} factors the ADRT into levels that act as convolutions up to
permutations, gives each single-quadrant level an explicit singular value
decomposition by discrete sine transforms, derives a spectral decomposition
of the cross-quadrant level from a block-sinusoidal basis, and assembles the
explicit inverse SPIFE of cost $\cO(N^2\log^2N)$, competitive with iterative
methods for data in the range; it also observes that SPIFE is unstable for
data outside the range and leaves open an explicit projection onto the
range.  The offset-Fourier butterflies of Section~\ref{sec:fourier} are the
level decompositions of \cite{LORR} in a cyclic setting, and the hybrid of
Section~\ref{sec:hyb} keeps its cross-quadrant inverse.

Our approach is spectral, and its continuous counterparts are classical.  The
Radon transform and its inversion are treated in
\cite{Radon1917,Cormack63,Deans83,Helgason99,Natterer,Epstein07}; singular value
decompositions are known for full, exterior, interior and fan-beam geometries
\cite{Marr74,Quinto83,Louis84,Maass87,Maass92,KazantsevBukhgeim04}, and
regularization by spectral filtering, in particular the Tikhonov--Phillips
regularization of the Radon transform \cite{Louis85} and the design of filters
\cite{LouisSchuster96}, is the continuous analogue of the filters $p_k$ and of
the family $L^n_\tau$ below.  The single quadrant sees a limited range of
angles, whose instability is analysed in \cite{LouisRieder89}.  The cyclic
offsets of Section~\ref{sec:fourier} turn each quadrant into a Radon
transform on a torus; for such periodic transforms Fourier analysis is
explicit \cite{Ilmavirta15,Ilmavirta20,Railo20,IlmavirtaMonard19}, and discrete
periodic Radon transforms have exact fast inverses
\cite{Kelley93,Hsung96,Matus93}.  Fast but ill-conditioned formal inverses
also exist for Fourier-based Radon transforms in two and three dimensions
\cite{OlivaGarcia22,Averbuch03}, which is the same tension between speed and
stability that motivates this paper.  The ADRT serves as a fast Radon
transform in dimensional splitting of hyperbolic equations
\cite{Rim18split}, displacement interpolation \cite{Rim18mr} and sliced
Wasserstein barycenters \cite{Bonneel15}, and its continuous model underlies
electron tomography \cite{Frank96,Midgley03}; in these uses data are rarely
exactly in the range, which is where an inverse that is stable for general
data is needed.  For the Chebyshev semi-iteration used throughout see
\cite{GolubVarga61,Greenbaum97,Saad}.

\subsubsection*{Other discretizations.}  It is natural to ask which
discretizations of the Radon transform come with an inverse that is explicit,
fast and stable for data outside the range, and which have an explicit
description of their range.  Transforms built on the geometry of a finite
affine plane have both: for the finite Radon transform on $\ZZ_p\times\ZZ_p$
\cite{Matus93}, which sums over the $p(p+1)$ lines $\{(x,mx+b)\}$ and $\{(c,y)\}$
modulo $p$, any two distinct points lie on exactly one line and each point on
$p+1$ lines, so $R^\top R=p\,\Id+\mathbf 1\mathbf 1^\top$ ($\mathbf 1$ the vector of ones),
$R^\dagger=\frac1p\big(\Id-\frac{\mathbf 1\mathbf 1^\top}{p(p+1)}\big)R^\top$ is explicit and costs one
back-projection, and the range is cut out by the $p$ conditions that all $p+1$
projections have the same total sum.  The same algebra underlies
the variants of Kingston and Svalbe \cite{Kingston06}.  The Mojette transform
has exact linear-time inverses for Katz-admissible angle sets
\cite{Guedon05}, but these are sensitive to noise, and its robust inverses are
either much slower or iterative.  In all of these the lines wrap around the
periodic array, so the transforms are not approximations of the continuous
Radon transform in the sense of \cite{Brady98}.

Among discretizations that keep the Euclidean geometry of the lines, Beylkin's
discrete Radon transform \cite{Beylkin87} sums over a family of slopes with
periodic offsets; its matrix is block circulant, and its least-squares inverse
$(R^*R)^{-1}R^*$ is computed exactly, one offset frequency at a time, on the
band where the per-frequency systems are invertible.  This is the structure of
the single-quadrant transforms in Lemma~\ref{lem:blockdiag} and of the
quadrant-wise pseudo-inverses $\cR^+_q$ below; it does not survive the
combination of quadrants, which is where the present construction begins.  The
pseudo-polar Fourier transform and the fast slant stack \cite{Averbuch08}
follow the lines through the Fourier slice theorem.  They are inverted
iteratively by preconditioned conjugate gradients, with tens of iterations in
the reported experiments, and also \emph{directly}, by a fixed sequence of
well-conditioned one-dimensional least-squares resamplings from the
pseudo-polar onto a Cartesian grid, in $\cO(N^2\log N)$ operations
\cite{Averbuch08,Averbuch16}; applied to arbitrary data, the direct algorithm
is a fast, stable left inverse with a fixed operation count, of the same kind
as the inverses constructed here, though its noise amplification relative to
the least-squares solution has not, to our knowledge, been analysed.  For polar
Fourier sampling, which is too ill-conditioned for direct inversion, direct
least-squares solvers based on hierarchical semiseparable matrices have
recently been proposed \cite{LiLiu26}.  None of these Fourier-based
transforms, to our knowledge, comes with an explicit description of its
range: data on the pseudo-polar grid are in the range exactly when they are
the samples of a trigonometric polynomial of the right degree, a condition
that the resampling checks only implicitly, and for the continuous transform
the range is described by the Helgason--Ludwig moment conditions
\cite{Natterer,Helgason99}, of which sampled data satisfy discretized versions
only approximately.  (For the planar X-ray transform the range has also been
characterized through the Fourier coefficients of the data on the torus
\cite{SadiqTamasan22}.)

The ADRT, whose lines are integer-sampled and do not wrap, differs from both
groups.  Its range is described explicitly: for a single quadrant by the
$N(N-1)/2$ mass and support conditions of \cite{LRR}, linear constraints with
integer coefficients that are local in the offset and follow the levels of
the transform.  Its inverses, on the other hand, were either exact only on the
range \cite{Rim20,LORR} or iterative (Press's multigrid \cite{Press06},
conjugate gradients \cite{Otness23}), and the fast formal inverses are severely
ill-conditioned (Press estimates a condition number of about $10^{18}$ at
$N=256$, and see \cite{OlivaGarcia22}).  The construction of this paper gives an
explicit inverse of the ADRT that is stable for general data, in closed form
and with a proved complexity bound, a small power of $N$ above the optimal
$N^2\log^2N$ (Section~\ref{sec:cond}), by changing the question from the
orthogonal to an oblique projection onto the range.

This paper takes the oblique point of view.  A left inverse $L$ of $\nR$
($L\nR=\Id$) solves \eqref{eq:prob} exactly when $b\in\rg(\nR)$ and, for
$b=\nR[x]+\nu$, returns $x+L\nu$; $\nR L$ is the oblique
projector onto $\rg(\nR)$ along $\ker L$.  SPIFE is such an $L$, with
$\ker L$ spanned by the forward lifts of the level kernels $\ker{\npmz{S}{m}}^\top$.
Its instability comes from the small singular values $2\sin\frac{k\pi}{4N+2}$ of
the levels, which correspond to low offset frequencies in ``anti-phase''
combinations of adjacent strips \cite{LORR}.  Our construction replaces the
level-wise inversion by a quadrant-wise one in the offset-Fourier domain, keeps
each quadrant only on the subspace where it is stable (a spectral projector of
the quadrant's Gram matrix at every offset frequency), and recombines the four
quadrants through a well conditioned frame operator (the sum of the four
projectors, Section~\ref{sec:partition}).

The resulting family of left inverses $L^n_\theta$ (Theorem~\ref{thm:main}) has
the following properties.
\begin{enumerate}
\item[(i)] $L^n_\theta\nR=\Id$ exactly, for every threshold $\theta$; the proof
rests on one identity, $P_kA_k^+A_k=P_k$, and the same argument admits any
filter of the quadrant Gram matrices in place of the spectral projector
(Remark~\ref{rem:filter}), giving a one-parameter family $L^n_\tau$ that
tends to the Moore--Penrose inverse as $\tau\to\infty$.
\item[(ii)] Every factor of $L^n_\theta$ is given in closed form from the
offset-Fourier (butterfly) representation of the single-quadrant levels
(Lemma~\ref{lem:blockdiag}), except one symmetric positive definite operator,
the window-compressed frame operator, whose condition number stays below $10$
for $N\le256$ and whose inverse is therefore realized by a Chebyshev polynomial
of fixed, data-independent degree.
\item[(iii)] The amplification of data noise is a constant multiple of that
of the Moore--Penrose inverse: $1.8$ for the spectral projectors, $1.36$ and
$1.07$ for the Tikhonov filter with $\tau=0.2N$ and $\tau=N$, independently of
$N$ up to $256$ (Sections~\ref{sec:stab} and \ref{sec:num}).
\item[(iv)] With the filters taken as Chebyshev polynomials of the quadrant
Gram matrices, applied by the same butterflies that compute the ADRT, the
degrees needed at the offset frequencies sum to $\cO(N\sqrt{\log N})$, which
is proved for the explicit bounds \eqref{eq:Lambda}, and the total cost is
$\cO(N^2\log^{3/2}N\cdot\sqrt{\kappa(E^\top F_pE)})$ for fixed precision
(Theorem~\ref{thm:fast}), where $\kappa(E^\top F_pE)\approx\tau/\lambda_{\min}(R^{n\top}R^n)$
is governed by the smallest singular value of the ADRT (Section~\ref{sec:cond}); no iteration is run to a tolerance, all polynomial
degrees being fixed in a data-independent setup for which explicit formulas
are given (Section~\ref{sec:poly}).
\item[(v)] Keeping the explicit inverse of the cross-quadrant level $\Sone$ of
\cite{LORR} and filtering only the single-quadrant levels gives a second exact
left inverse, the hybrid of Section~\ref{sec:hyb}, which costs $24$--$30\%$ less
per application and amplifies noise by $1.20$ instead of $1.36$ at $\tau=0.2N$.
On the way the level-1 Gram matrix ${\Sone}^\top\Sone$ is shown to be the
nearest-neighbour stencil $8\Id+K\otimes J+J\otimes K$ with spectrum in $[4,16]$
(Lemma~\ref{lem:g1}), and the projection trick of \cite[Thm.~5.1]{LORR} to be
the inverse of $4\Id+8\Pi$, $\Pi$ the $2\times2$-cell average (Lemma~\ref{lem:lorr}).
\end{enumerate}
\subsubsection*{Notes on terminology.}
The inverses are \emph{explicit} in the following
sense: each is a fixed linear map, given in closed form with every factor
explicit, in which the two matrix functions ($p_k(M_k)$ per frequency and
$(E^\top F_pE)^{-1}$) are replaced by Chebyshev polynomials of predetermined
degree, evaluated by their three-term recurrences.  These recurrences are
semi-iterations in the sense of Golub and Varga \cite{GolubVarga61}, and we do
not call the construction non-iterative; what distinguishes it from an
iteration stopped at a tolerance is that the left-inverse identity (i) holds
exactly for every degree, that the degrees are fixed a priori and
independently of the data, and that no residual is ever tested.
\subsubsection*{Press's conjecture.}  Press observed that his full multigrid
inverse reduces the error by a fixed factor per $\cO(N^2\log^2N)$ operations,
reaching a given accuracy in $\cO(N^2\log^3N)$ work, and conjectured that an
iterative inverse of complexity $\cO(N^2\log^qN)$ with $2\le q\le3$ exists, $q=2$
being optimal, i.e.\ $\cO(N^2\log^2N\,\log\varepsilon^{-1})$ for accuracy $\varepsilon$
\cite{Press06}; see the accounts in \cite[\S1]{LRR} and \cite[\S1]{LORR}, where
SPIFE attains $q=2$ for data in the range.  These bounds were not proved.
The inverse constructed here costs
$\cO(N^{5/2}\log^{3/2}N\log\varepsilon^{-1}/\sigma_{\min}(R^n))$ by a fixed-degree polynomial
formula: the factor $N^2\log^{3/2}N\log\varepsilon^{-1}$ for the frame operator is
proved (Theorem~\ref{thm:fast}(iii) with the explicit bounds \eqref{eq:Lambda},
which fix the degrees), and the remaining factor, the degree of the outer
polynomial, is $\sqrt{\kappa(E^\top F_pE)}\log\varepsilon^{-1}$ with
$\kappa(E^\top F_pE)\approx\tau/\lambda_{\min}(R^{n\top}R^n)$ (Section~\ref{sec:cond}).  The measured $\sigma_{\min}(R^n)\sim N^{1/4}$ makes the cost about
$\cO(N^{2.25}\log^{3/2}N)$, so that the conjectured $q=2$ is missed by a small
power of $N$, not by a logarithm.  Section~\ref{sec:cond} traces this factor to the digital
lines themselves, namely to the aliasing of integer-sampled lines at offset
frequencies near $2\pi/3$, rather than to the inverse: the construction has
cost $\cO(N^2\log^{3/2}N\log\varepsilon^{-1})$ for any transform of the same
structure whose normal operator has $\lambda_{\min}\gtrsim N$.  Two further qualifications: the inner
filters need only a fixed, crude tolerance ($\varepsilon_{\rm in}=0.1$ changes neither
the left-inverse identity nor the noise amplification,
Section~\ref{sec:inner}), so that $\varepsilon$ enters through the outer degree
alone; and the inverse is oblique, with a constant factor
$1.06$--$1.4$ in noise amplification over the Moore--Penrose inverse.  For the
Moore--Penrose inverse itself the methods of \cite{Press06,LORR} and the
symbol-preconditioned iterations remain iterative, with
observed complexities $\cO(N^2\log^3N)$ to $\cO(N^2\log^4N)$ in the same range of
$N$, where $\sigma_{\min}(R^n)^{-1}$ and a power of $\log N$ are not
distinguishable.  A fixed-degree polynomial alternative for the Moore--Penrose inverse,
a fixed-degree Chebyshev polynomial of the normal equations, has degree
$\Theta(N/\sigma_{\min}(R^n))$ and cost $\cO(N^3\log N/\sigma_{\min}(R^n))$,
asymptotically more than the oblique inverse; with compiled
implementations on one core it is faster up to $N=128$ and slower from
$N=256$ on (Remark~\ref{rem:normal}, Section~\ref{sec:compiled}).  

What the present construction shows is that the obstruction
to an explicit inverse is confined to the inversion of one frame operator
whose conditioning is that of the ADRT itself.
The instability that these constructions avoid is the Fourier dual of the
polynomial growth of out-of-range data in \cite{LRR}: the exact single-quadrant
inverse has a pole of order $\log N$ at offset frequency zero
(Proposition~\ref{prop:pole}).  Proved are property (i), the closed forms in (ii)
and the degree bound in (iv); the quantitative statements about conditioning,
noise amplification and cost, including (iii), are backed by numerical tests up to
$N=256$, including a Shepp--Logan example with up to $10\%$ noise.

The paper is organized as follows.  Section~\ref{sec:prelim} fixes the
notation of \cite{LRR,LORR}; Section~\ref{sec:fourier} diagonalizes the cyclic
levels in the offset frequency and identifies the pole at $\omega=0$;
Section~\ref{sec:oblique} constructs the oblique inverses and
Section~\ref{sec:poly} their polynomial-filter realization, including
explicit and certified spectral bounds; Section~\ref{sec:hyb} is the hybrid;
Section~\ref{sec:num} collects the numerical results of both constructions;
Section~\ref{sec:cond} analyses what governs their conditioning, and
Section~\ref{sec:disc} concludes.
\section{Notation and preliminaries}\label{sec:prelim}

We follow the notations from \cite{LRR,LORR}.
Let $[m]:=\{i\in\NN_0: i<m\}$, $[m]_1:=\{i+1:i\in[m]\}$,
$a\brem b$ the remainder of $a$ modulo $b$, and $\Id_m$ the identity in
$\RR^{m\times m}$.  Images are $x\in\RR^{N\times N}$ with entries $x(i,j)$,
$(i,j)\in[N]\times[N]$, $i$ the row index.

\subsection{Digital lines and the single-quadrant ADRT}
The digital lines $D_n(h,s)$ determined by slope $s\in[N]$ and offset $h\in\ZZ$ (also called the height)
are the subsets of $\ZZ\times[N]$ defined recursively by
\cite{Brady98,Rim20,LORR}
\begin{equation}\label{eq:dline}
D_n(h,2s)=D^{(L)}_{n-1}(h,s)\cup D^{(R)}_{n-1}(h+s,s),\qquad
D_n(h,2s+1)=D^{(L)}_{n-1}(h,s)\cup D^{(R)}_{n-1}(h+s+1,s),
\end{equation}
where $D^{(L)}_{n-1},D^{(R)}_{n-1}$ are digital lines of half length on the left
and right halves of the columns, initialized by $D_0(h,0)=\{(h,0)\}$.  The
single-quadrant ADRT of $x$ is
\begin{equation}\label{eq:Rnn}
R^n_n[x](h,s):=\sum_{(i,j)\in D_n(h,s)}x(i,j),\qquad (h,s)\in E^n,
\end{equation}
where, with $E^n_m:=\{(h,s)\in\ZZ\times[2^n]: -(s\brem2^m)\le h<2^n\}$ as in
\cite{LORR}, $E^n:=E^n_n$ is the set of all lines meeting the square,
$|E^n|=\frac32N^2-\frac12N$.  The recursion \eqref{eq:dline} gives
$R^n_n=S^n_n\cdots S^n_1$, where the level $\nm{S}:\RR^{E^n_{m-1}}\to\RR^{E^n_m}$
combines the sections $f^n_{m,2\ell},f^n_{m,2\ell+1}$ (strips of width $2^{m-1}$)
into $g^n_{m,\ell}$ (strips of width $2^m$) by \cite[eq.~(2.12)]{LORR}
\begin{equation}\label{eq:Snm}
    g^n_{m,\ell}(h,2s)=f^n_{m,2\ell}(h,s)+f^n_{m,2\ell+1}(h+s,s),
    \quad
    g^n_{m,\ell}(h,2s+1)=f^n_{m,2\ell}(h,s)+f^n_{m,2\ell+1}(h+s+1,s).
\end{equation}

We shall use a closed form of \eqref{eq:dline}.  Write $s=\sum_{k\in[n]}s_k2^k$ and
$t=\sum_{j\in[n]}t_j2^j$ in binary.
\begin{lemma}[Bit-linear form]\label{lem:bitlinear}
$D_n(h,s)=\{(h+\delta_s(t),t):t\in[N]\}$ with
\begin{equation}\label{eq:delta}
\delta_s(t)=\sum_{j\in[n]}t_j\,w_j(s),\qquad
w_j(s)=\Big\lfloor\frac{s}{2^{\,n-j-1}}\Big\rfloor-\Big\lfloor\frac{s}{2^{\,n-j}}\Big\rfloor
=\bigg\lceil\frac12\Big\lfloor\frac{s}{2^{\,n-j-1}}\Big\rfloor\bigg\rceil .
\end{equation}
\end{lemma}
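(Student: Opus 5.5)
The plan is to prove the closed form by induction on $n$, unrolling the recursion \eqref{eq:dline} one level at a time. The key observation is that each step of the recursion splits the columns by the top bit $t_{n-1}$ of $t$ and adds an offset shift that is exactly $w_{n-1}(s)$. The base case $n=0$ is immediate: $D_0(h,0)=\{(h,0)\}$, $[N]=\{0\}$, and the sum defining $\delta_0(0)$ is empty.

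For the inductive step, fix a slope $\sigma\in[2^n]$ and write $\sigma=2s+b$ with $b\in\{0,1\}$, so that $s=\lfloor\sigma/2\rfloor\in[2^{n-1}]$. Then \eqref{eq:dline} reads
\[
D_n(h,\sigma)=D^{(L)}_{n-1}(h,s)\cup D^{(R)}_{n-1}(h+s+b,s).
\]
I will interpret the halves in the standard way (the convention of \cite{LORR}):
\begin{itemize}
\item the left half consists of the columns $t=t'$ with $t'\in[2^{n-1}]$, i.e.\ $t_{n-1}=0$;
\item the right half consists of the columns $t=2^{n-1}+t'$, i.e.\ $t_{n-1}=1$, with the lower-level line placed on them by translating columns.
\end{itemize}
By the inductive hypothesis both pieces have the form $\{(h'+\delta^{(n-1)}_s(t'),t')\}$, where the lower bits of $t$ coincide with those of $t'$. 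Hence
\[
D_n(h,\sigma)=\{(h+t_{n-1}(s+b)+\delta^{(n-1)}_s(t),\,t):t\in[N]\}.
\]

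Two index identities finish the induction. First, $s+b=\lceil\sigma/2\rceil=\sigma-\lfloor\sigma/2\rfloor$, which is $w_{n-1}(\sigma)$ by \eqref{eq:delta} with $j=n-1$. Second, for $j\in[n-1]$,
\[
w^{(n-1)}_j(s)=\Big\lfloor\frac{s}{2^{n-j-2}}\Big\rfloor-\Big\lfloor\frac{s}{2^{n-j-1}}\Big\rfloor=w^{(n)}_j(\sigma),
\]
by the nested-floor identity $\lfloor\lfloor\sigma/2\rfloor/2^a\rfloor=\lfloor\sigma/2^{a+1}\rfloor$. Summing over the bits gives $\delta^{(n)}_\sigma(t)=\sum_{j\in[n]}t_jw_j(\sigma)$.

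The second expression for $w_j$ is elementary. Put $a=\lfloor s/2^{n-j-1}\rfloor$. Nested floors give $\lfloor s/2^{n-j}\rfloor=\lfloor a/2\rfloor$, and for an integer $a$ one has $a-\lfloor a/2\rfloor=\lceil a/2\rceil$.

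The only delicate point, and the one I expect to need care, is the bookkeeping of the half-strip convention: which half carries the shift, whether the right half is indexed by the top bit $t_{n-1}$ or by the lowest bit, and that the offset $h$ in $D^{(R)}$ is interpreted at the same row scale. I would verify this against \eqref{eq:Snm}, where $f^n_{m,2\ell+1}$ (the right strip) carries the shift $s$ or $s+1$. That confirms the shift multiplies the most significant column bit at the top level. The lower levels then correspond to successively less significant bits through the induction.
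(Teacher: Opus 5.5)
Your proposal is correct and follows the paper's proof. The paper also argues by induction on $n$: the top column bit $t_{n-1}$ picks up the jump $\lfloor\sigma/2\rfloor+(\sigma\brem 2)=\lceil\sigma/2\rceil=w_{n-1}(\sigma)$, and the lower bits see the slope $\lfloor\sigma/2\rfloor$ on the half-width strip; your nested-floor check that $w^{(n-1)}_j(\lfloor\sigma/2\rfloor)=w^{(n)}_j(\sigma)$ for $j<n-1$ simply writes out a step the paper leaves implicit.
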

\begin{proof}
By \eqref{eq:dline} the top bit $t_{n-1}$ of the column contributes the jump
$\lfloor s/2\rfloor+(s\brem2)=\lceil s/2\rceil$, and the remaining bits see the
slope $\lfloor s/2\rfloor$ on a strip of half width; induction on $n$ gives
$\delta_s(t)=\sum_jt_j\lceil\lfloor s/2^{n-j-1}\rfloor/2\rceil$.  This is the
formula of \cite[Cor.~2.9]{LRR}, $k_+(t)-k_+(t-1)=((t)_2\wedge\neg(t-1)_2)*\overline{(s)}_2$,
summed over $t$.
\end{proof}
Thus $\delta_s(0)=0$, $\delta_s(N-1)=s$, and $|\delta_s(t)-st/N|\le n/2$: digital
lines deviate from straight lines of slope $s/N\in[0,1)$ by at most $\frac12\log N$.

\subsection{The full ADRT and its levels}
With the quadrant permutations of \cite[eq.~(2.19)]{LORR},
\begin{equation}\label{eq:permute}
\begin{aligned}
(T^n_{\RN{1}}x)(i,j)&=x(j,2^n-1-i),&\qquad (T^n_{\RN{2}}x)(i,j)&=x(2^n-1-i,j),\\
(T^n_{\RN{3}}x)(i,j)&=x(i,j),&\qquad (T^n_{\RN{4}}x)(i,j)&=x(2^n-1-j,2^n-1-i),
\end{aligned}
\end{equation}
the full ADRT is $\nR:=\npmz{R}{n}=\npmz{S}{n}\cdots\npmz{S}{1}$ with the
cross-quadrant level and the single-quadrant levels \cite[Def.~2.1]{LORR}
\[
\npmz{S}{1}=\big[S^n_1T^n_{\RN{1}},\,S^n_1T^n_{\RN{2}},\,S^n_1T^n_{\RN{3}},\,S^n_1T^n_{\RN{4}}\big]^\top,\qquad
\npmz{S}{m}=\Id_4\otimes\nm{S}\quad(m\ge2);
\]
equivalently $\nR[x]=\big(R^n_n[T^n_qx]\big)_{q=\RN{1},\dots,\RN{4}}\in\RR^{4\times E^n}$.
(Storing each quadrant as an array of shape $(2N-1)\times N$ indexed by
$h+N-1$ and $s$, with zeros at $h<-s$, gives the layout of \cite{Otness23};
the structurally zero entries affect no generalized inverse.)  $\nR$ has full
column rank since $R^n_n$ is injective \cite{Rim20}, and
$\kappa(\nR)=1.53,2.46,3.75,5.96,9.89$ for $N=2,\dots,32$, i.e.\ mildly
ill-conditioned, whereas $\kappa(R^n_n)$ grows faster than any power of $N$
(verified by dense SVD).

\cite[Thm.~3.5]{LORR} gives the SVD $\npmz{S}{m}=\npmz{U}{m}\npmz{\Sigma}{m}{\npmz{V}{m}}^\top$
for $m\ge2$ from the SVD of the bidiagonal convolution $K^+_{2N}$ (singular
values $2\cos\frac{k\pi}{4N+2}$, $k\in[2N]_1$, DST-II/DST-I singular vectors), and
\cite[Thm.~4.1, 5.1]{LORR} a spectral decomposition of $\npmz{S}{1}$; the
resulting SPIFE,
\begin{equation}\label{eq:spife}
(\nR)^\dagger_{\rm SPIFE}=\widehat V^n_{(1)}\widehat U^{n\top}_{(1)}\prod_{m=2}^n\npmz{V}{m}(\npmz{\Sigma}{m})^{-1}{\npmz{U}{m}}^\top ,
\end{equation}
is a left inverse of $\nR$ \cite[Thm.~5.3]{LORR}.  Clearly, it is not the Moore--Penrose
inverse: for $b\notin\rg(\nR)$ one has $(\nR)^\dagger=L\,P_{\rg(\nR)}$ for every left
inverse $L$, and the orthogonal projector $P_{\rg(\nR)}=\nR({\nR}^\top\nR)^{-1}{\nR}^\top$ has
no closed form: the eigenvalues of ${\nR}^\top\nR$ generate non-abelian Galois
extensions already for $N=4$, so that no trigonometric transform diagonalizes
${\nR}^\top\nR$ (Appendix~\ref{app:galois}).
\section{The levels in the offset-Fourier domain}\label{sec:fourier}

\subsection{Cyclic offsets}
Let $L=2N$ (any $L \ge 2N$ will do) and consider images on the cyclic strip
$\ZZ_L\times[N]$, i.e.\ $f\in\CC^{L\times N}$ with the offset $h$ taken modulo $L$.
Define the cyclic single-quadrant ADRT by \eqref{eq:Rnn} with
$(h+\delta_s(t))\brem L$ in place of $h+\delta_s(t)$,
\begin{equation}\label{eq:cyc}
\cR^n_n[f](h,s)=\sum_{t\in[N]}f\big((h+\delta_s(t))\brem L,\,t\big),\qquad (h,s)\in[L]\times[N].
\end{equation}
If $f$ vanishes outside the rows $[N]$ then no line wraps (every line meeting the
square has $-N<h<N$ and rises by at most $N-1$), so $\cR^n_n[f]$ at
$h\in\{-(N-1),\dots,N-1\}\brem L$ equals $R^n_n$ of the square image and vanishes
elsewhere.  Let $\iota:\RR^{E^n}\to\RR^{L\times N}$ be this embedding of data
($h\mapsto h\brem L$, zeros elsewhere) and $E:\RR^{N\times N}\to\RR^{L\times L}$ the
zero padding of an image into the $L\times L$ torus.

\subsection{Diagonalization in the offset and the butterfly}
For $f\in\CC^{L\times N}$ let
\[
\hat f(k,t):=\sum_{h\in[L]}f(h,t)\,e^{-2\pi\mathrm{i}kh/L},\qquad k\in[L],
\]
be the discrete Fourier transform in the offset ($\mathrm{DFT}_h$), and
$\omega_k:=2\pi k/L$.

\begin{lemma}[Block diagonalization of the cyclic levels]\label{lem:blockdiag}
\mbox{}
\begin{enumerate}
\item[(i)] For every $k\in[L]$,
\begin{equation}\label{eq:Ak}
\widehat{\cR^n_n[f]}(k,s)=\sum_{t\in[N]}e^{\mathrm{i}\omega_k\delta_s(t)}\hat f(k,t)
=\big(A_k\hat f(k,\cdot)\big)(s),\qquad A_k(s,t):=e^{\mathrm{i}\omega_k\delta_s(t)},
\end{equation}
i.e.\ $\cR^n_n=\mathrm{DFT}_h^{-1}\,\blockdiag_{k\in[L]}(A_k)\,\mathrm{DFT}_h$ and
$\cR^{n\top}_n=\mathrm{DFT}_h^{-1}\,\blockdiag_{k}(A_k^*)\,\mathrm{DFT}_h$, with $A_k\in\CC^{N\times N}$
indexed by (slope, column).
\item[(ii)] $A_k=\mathcal B^n_n\cdots\mathcal B^n_1$, where $\mathcal B^n_m$ is the level
$\nm{S}$ of \eqref{eq:Snm} at offset frequency $\omega_k$: it is block diagonal
over the strip pairs $\ell\in[2^{n-m}]$ and input slopes $s\in[2^{m-1}]$ with the
$2\times2$ blocks
\begin{equation}\label{eq:butterfly}
\begin{aligned}
\begin{pmatrix}\hat g(\cdot,2s)\\ \hat g(\cdot,2s+1)\end{pmatrix}
&=\begin{pmatrix}1& z^{s}\\ 1& z^{s+1}\end{pmatrix}
\begin{pmatrix}\hat f_{2\ell}(\cdot,s)\\ \hat f_{2\ell+1}(\cdot,s)\end{pmatrix}
=\beta\,\mathrm{diag}(1,z^{s})\begin{pmatrix}\hat f_{2\ell}(\cdot,s)\\ \hat f_{2\ell+1}(\cdot,s)\end{pmatrix},\\
z&=e^{\mathrm{i}\omega_k},\qquad \beta=\begin{pmatrix}1&1\\1&z\end{pmatrix}.
\end{aligned}
\end{equation}
\item[(iii)] $A_k$ and $A_k^*$ are applied to a vector in $\cO(N\log N)$ operations,
hence $\cR^n_n$ and $\cR^{n\top}_n$ in $\cO(N^2\log N)$.
\end{enumerate}
\end{lemma}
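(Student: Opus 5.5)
The plan is to prove (i) by a shift argument in the offset, then (ii) by applying the same argument to each level and composing, and (iii) by counting butterflies.

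\textbf{Part (i).} I would substitute the definition \eqref{eq:cyc} into $\mathrm{DFT}_h$ and use that reduction modulo $L$ is a group homomorphism $\ZZ\to\ZZ_L$. For each $(s,t)$ the map $h\mapsto(h+\delta_s(t))\brem L$ is a bijection of $[L]$, so the substitution $h'=h+\delta_s(t)$ gives
\[
\sum_{h\in[L]}f\big((h+\delta_s(t))\brem L,t\big)e^{-\mathrm{i}\omega_kh}=e^{\mathrm{i}\omega_k\delta_s(t)}\hat f(k,t).
\]
Here $e^{-\mathrm{i}\omega_k h}$ depends only on $h\brem L$, which is what makes the substitution legitimate. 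Summing over $t$ gives \eqref{eq:Ak}, and hence $\cR^n_n=\mathrm{DFT}_h^{-1}\blockdiag(A_k)\mathrm{DFT}_h$.

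For the transpose, I use that $L^{-1/2}\mathrm{DFT}_h$ is unitary. Then $\cR^{n\top}_n=\cR^{n*}_n$, since the operator is real, and this equals $\mathrm{DFT}_h^{*}\blockdiag(A_k^*)\,(\mathrm{DFT}_h^{-1})^{*}=\mathrm{DFT}_h^{-1}\blockdiag(A_k^*)\mathrm{DFT}_h$, the factors $L$ cancelling. Alternatively, one can redo the shift computation directly for $\cR^{n\top}_n[g](h,t)=\sum_s g((h-\delta_s(t))\brem L,s)$.

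\textbf{Part (ii).} I would define cyclic levels $\mathcal S^n_m$ on $\CC^{L\times N}$ by \eqref{eq:Snm} with all offsets read modulo $L$, and no restriction to $E^n_m$. The key step, and the one I expect to need the most care, is checking that $\mathcal S^n_n\cdots\mathcal S^n_1=\cR^n_n$.

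\begin{itemize}
\item The recursion \eqref{eq:dline} that produces $R^n_n=S^n_n\cdots S^n_1$ involves only additions of integer offset shifts.
\item Reducing every offset modulo $L$ commutes with these additions. So, inductively over levels, the level-$m$ output on strip $\ell$ is the cyclic sum along the digital lines of width $2^m$, and at $m=n$ it is exactly \eqref{eq:cyc} with $\delta_s$ as in Lemma~\ref{lem:bitlinear}.
\end{itemize}

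Given this, applying the shift computation from (i) to one level maps $f_{2\ell+1}(h+s,s)$ to $z^s\hat f_{2\ell+1}(k,s)$ and $f_{2\ell+1}(h+s+1,s)$ to $z^{s+1}\hat f_{2\ell+1}(k,s)$. This yields the $2\times2$ block in \eqref{eq:butterfly}, and the factorization $\beta\,\mathrm{diag}(1,z^s)$ is immediate. Since $\mathrm{DFT}_h$ acts column-wise and commutes with the strip and slope bookkeeping, conjugating the product of cyclic levels by $\mathrm{DFT}_h$ gives $A_k=\mathcal B^n_n\cdots\mathcal B^n_1$ frequency by frequency.

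\textbf{Part (iii).} At level $m$ there are $2^{n-m}\cdot2^{m-1}=N/2$ butterflies, each costing $\cO(1)$ once the twiddles $z^s$ are available; these are computed by successive multiplication in $\cO(N)$. Over $n$ levels this gives $\cO(N\log N)$ for $A_k$. For $A_k^*=\mathcal B^{n*}_1\cdots\mathcal B^{n*}_n$ the same count applies, using the conjugate-transposed blocks $\mathrm{diag}(1,\bar z^s)\beta^*$ in reverse order. For the full transforms there are $L=2N$ frequencies, giving $\cO(N^2\log N)$, plus $N$ FFTs of length $L$ for $\mathrm{DFT}_h$ and its inverse, again $\cO(N^2\log N)$.
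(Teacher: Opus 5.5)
Your proposal is correct and follows the paper's proof step for step. For (i) you use the same shift computation under $\mathrm{DFT}_h$, and you get the adjoint from the unitarity of $\mathrm{DFT}_h/\sqrt L$. For (ii) you apply that computation to each level and use that the cyclic levels compose to $\cR^n_n$. For (iii) you count operations per level. You write out the cyclic-composition step, which the paper asserts in one clause. The paper also notes an equivalent route through the bit-linear form of Lemma~\ref{lem:bitlinear}, where the sum over the bits of $t$ factorizes level by level.
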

\begin{proof}
(i) By \eqref{eq:cyc}, $\cR^n_n$ 
commutes with the cyclic shift of $h$ (replacing
$f(h,t)$ by $f((h-1)\brem L,t)$ shifts $\cR^n_n[f]$ by one in $h$); for each
$(s,t)$ the map $f(\cdot,t)\mapsto f((\cdot+\delta_s(t))\brem L,t)$ is a cyclic
shift, which the DFT turns into multiplication by $e^{\mathrm{i}\omega_k\delta_s(t)}$.
Summing over $t$ gives \eqref{eq:Ak}; the statement for $\cR^{n\top}_n$ follows
by taking adjoints, since $\mathrm{DFT}_h/\sqrt L$ is unitary.
(ii) Apply (i) to the levels: by \eqref{eq:Snm}, $\nm{S}$ shifts the second
section by $s$ or $s+1$ in the offset and adds, which under $\mathrm{DFT}_h$ is
\eqref{eq:butterfly}; composing the $n$ levels gives $A_k$ because
$\cR^n_n=S^n_n\cdots S^n_1$ holds with cyclic offsets as well.  Equivalently, by
Lemma~\ref{lem:bitlinear}, $A_k(s,t)=\prod_{j\in[n]}\big(e^{\mathrm{i}\omega_kw_j(s)}\big)^{t_j}$
with $w_j(s)$ depending only on the top $j+1$ bits of $s$, so the sum over the
bits of $t$ factorizes level by level.
(iii) Each $\mathcal B^n_m$ has two nonzeros per row and there are $n=\log N$
levels.
\end{proof}

The singular values of $\beta$ are $2|\cos\frac{\omega_k}4|$ and
$2|\sin\frac{\omega_k}4|$: these are the two branches $2\cos\frac{k'\pi}{4N+2}$,
$k'\le N$ and $k'>N$, of the singular values of $K^+_{2N}$ in
\cite[Lemma~3.1]{LORR} (in-phase and anti-phase combinations of the two
sections at offset frequency $\omega_k$); Lemma~\ref{lem:blockdiag} is thus the
offset-Fourier form of the block-convolution structure
$\nm{S}=\nm{Q}\nm{Z}\nm{P}$ of \cite[Lemma~3.3]{LORR}.

\subsubsection*{Fourier cones.}  Transform also in the column index,
$\hat x(\omega,\eta)=\sum_{i,j}x(i,j)e^{-\mathrm{i}(\omega i+\eta j)}$, with
$\omega$ the row (offset) frequency and $\eta$ the column frequency.  For a
straight line of slope $\sigma\in[0,1]$, $i=h+\sigma j$, the sum along the
line of the plane wave $e^{\mathrm{i}(\omega i+\eta j)}$ equals
$e^{\mathrm{i}\omega h}\sum_je^{\mathrm{i}(\omega\sigma+\eta)j}$, which is of size $N$
when $\eta=-\sigma\omega$ and small otherwise: this is the Fourier slice
theorem.  As $\sigma$ ranges over the slopes of quadrant \RN{3}, the modes
seen by that quadrant fill the \emph{Fourier cone}
\[
\mathcal C_{\RN{3}}:=\{(\omega,\eta):\ |\eta|\le|\omega|,\ \omega\eta\le0\},
\]
and likewise $\mathcal C_{\RN{2}}=\{|\eta|\le|\omega|,\ \omega\eta\ge0\}$ for the
reflected quadrant and $\mathcal C_{\RN{1}},\mathcal C_{\RN{4}}$, obtained by
exchanging $\omega$ and $\eta$, for the transposed quadrants; the four cones
tile the frequency plane.  In the offset-Fourier picture of
Lemma~\ref{lem:blockdiag}, at a fixed $\omega_k$ the cone of quadrant \RN{3} is
the band $|\eta|\le|\omega_k|$ of column frequencies.  By a \emph{Fourier cone
weight} we mean a multiplier in $(\omega,\eta)$ supported on (a smoothed
version of) one cone; a family of such weights summing to one is a Fourier
partition of unity.

For real $\omega$ we write $A_\omega(s,t):=e^{\mathrm i\omega\delta_s(t)}$ and
$M_\omega:=A_\omega^*A_\omega$, so that $A_k=A_{\omega_k}$; two facts about the
Gram matrices $M_k:=A_k^*A_k=M_{\omega_k}$ are used below.
\begin{itemize}
\item $A_0$ is the all-ones matrix, $M_0$ has one eigenvalue $N^2$ (constants)
and $N-1$ zero eigenvalues.  For $\omega_k\ne0$, $M_k$ is positive definite:
the exact inversion formula \cite[Thm.~2.6]{LRR}, in Fourier form
\[
\begin{aligned}
\begin{pmatrix}\hat f_{2\ell}(\cdot,s)\\ \hat f_{2\ell+1}(\cdot,s)\end{pmatrix}
&=\frac1{z-1}\begin{pmatrix}z&-1\\ -z^{-s}&z^{-s}\end{pmatrix}
\begin{pmatrix}\hat g(\cdot,2s)\\ \hat g(\cdot,2s+1)\end{pmatrix}
=\mathrm{diag}(1,z^{-s})\,\beta^{-1}\begin{pmatrix}\hat g(\cdot,2s)\\ \hat g(\cdot,2s+1)\end{pmatrix},\\
\beta^{-1}&=\frac1{z-1}\begin{pmatrix}z&-1\\-1&1\end{pmatrix}
\end{aligned}
\]
(the cumulative sum is the division by $z-1$), inverts \eqref{eq:butterfly};
but the division by $z-1$ at each of the $n$ levels makes $\|A_k^{-1}\|$ as large as
$(2/\omega_k)^n$ in anti-phase directions.  This is the discrete limited-angle
problem \cite[Ch.~VI]{Natterer}: at frequency $\omega_k$ the single quadrant
sees its Fourier cone (the band of column frequencies $|\eta|\le|\omega_k|$) with
eigenvalues $\approx2\pi N/|\omega_k|$, and the complementary band with $\cO(1)$
eigenvalues; but, as the next paragraph and Section~\ref{sec:left} show, the
eigenvectors of $M_k$ are not exactly the plane waves of the cone.
\item Numerically, using power iteration for $N\le512$,
$\max_{\omega_k\ne0}\lambda_{\max}(M_k)|\omega_k|/N$ equals $5.8$, $6.1$, $7.0$, $8.1$, $9.2$, $10.6$ for
$N=16,\dots,512$, i.e.\ $\lambda_{\max}(M_k)\le C(N)N/|\omega_k|$ with
$C(N)\approx1.2\log N$ (for straight lines \eqref{eq:Ak} is the pseudo-polar
Fourier transform \cite{Averbuch08} and the constant is $2\pi$; the entries of
$M_k$ are given in closed form in Lemma~\ref{lem:kernel}).
\end{itemize}

\begin{remark}[The level decompositions do not compose]\label{rem:levels}
Lemma~\ref{lem:blockdiag}(ii) is the explicit spectral decomposition of the
levels of \cite{LORR} in the cyclic setting: every block of $\mathcal B^n_m$ is
$\beta\,\mathrm{diag}(1,z^s)$, so each level has the two singular values
$2|\cos\frac{\omega_k}4|$ and $2|\sin\frac{\omega_k}4|$, each of multiplicity $N/2$, with
explicit and locally supported singular vectors, namely the in-phase and
anti-phase combinations $(1,\pm e^{-\mathrm i\omega_k/2})/\sqrt2$ of a pair.  

One may ask
whether the spectrum of $M_k$ follows by composing these decompositions, since
the overlaps of the singular bases of consecutive levels are explicit.  They
are, and are exactly balanced: the outputs of $\mathcal B^n_{m-1}$ are paired
by slope within a strip while the inputs of $\mathcal B^n_m$ are paired by strip at
a fixed slope, so on the tensor factorization (strip)$\otimes$(slope) the two
levels act on complementary factors, and every singular vector of one level
has exactly half of its energy in the large-singular-value subspace of the
next ($|\langle u_\pm\otimes e_\ell,e_s\otimes v_\pm\rangle|^2=\frac14$).  The level
spectra therefore constrain only the extremes of $\mathrm{spec}(M_k)$:
$\|A_k\|\le(2\cos\frac{\omega_k}4)^n$ and $\sigma_{\min}(A_k)\ge(2\sin\frac{\omega_k}4)^n$,
the latter sharp (Proposition~\ref{prop:pole}).  The aligned level-product estimate
$\{(4\cos^2\frac{\omega}4)^{n-j}(4\sin^2\frac{\omega}4)^j\}$ reproduces the small end and roughly
the median of the spectrum, but not the top: at $N=64$ it gives
$\lambda_{\max}=4001,2804,855$ against the true $1590,398,216$ for
$\omega=0.25,1,2$, and a single spike followed by a gap where the true spectrum
has a plateau of $\approx4,12,30$ eigenvalues within a factor $10$ of $\lambda_{\max}$
(the Fourier cone, $\approx N\omega/\pi$).  It is this
plateau that fixes the stable subspaces and hence $\kappa(E^\top FE)$ in
Section~\ref{sec:oblique}, so the conditioning of the oblique inverses is not estimated by the
level decompositions alone; the correct level-by-level composition is the
operator recursion for the twisted Gram matrices in the proof of
Lemma~\ref{lem:kernel}.
\end{remark}

\subsection{The pole at $\omega=0$ and the polynomial growth of out-of-range data}
The instability of the single-quadrant inverse has a precise Fourier
description which connects the level singular values of \cite{LORR} to the
polynomial growth in the range characterization of \cite{LRR}.

\begin{proposition}\label{prop:pole}
For $0<\omega<2\pi$,
\[
\sigma_{\min}(A_\omega)\ \ge\ \big(2\sin\tfrac{\omega}{4}\big)^n .
\]
Hence $A_\omega^{-1}$ has a pole of order at most $n=\log N$ at $\omega=0$:
$\|A_\omega^{-1}\|\le(2\sin\frac\omega4)^{-n}\sim(2/\omega)^n$.
\end{proposition}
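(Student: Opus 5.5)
The plan is to bound $\sigma_{\min}$ of the product $A_\omega=\mathcal B^n_n\cdots\mathcal B^n_1$ of Lemma~\ref{lem:blockdiag}(ii) level by level. The lemma is stated for $\omega=\omega_k$, but its proof only uses that $\delta_s(t)$ is an integer, so the factorization holds for every real $\omega$ with $z=e^{\mathrm i\omega}$. Since $\sigma_{\min}(XY)\ge\sigma_{\min}(X)\,\sigma_{\min}(Y)$ for square matrices, it suffices to show $\sigma_{\min}(\mathcal B^n_m)\ge 2\sin\frac\omega4$ for each $m$. Multiplying the resulting bound $\sigma_{\min}(A_\omega)\ge(2\sin\frac\omega4)^n$ gives the first claim. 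The second follows from $\|A_\omega^{-1}\|=1/\sigma_{\min}(A_\omega)$ and $2\sin\frac\omega4\sim\omega/2$ as $\omega\to0$.

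For a single level, $\mathcal B^n_m$ is, up to row and column permutations (which are unitary), block diagonal with $2\times2$ blocks $\beta\,\mathrm{diag}(1,z^s)$. The factor $\mathrm{diag}(1,z^s)$ is unitary, so $\sigma_{\min}(\mathcal B^n_m)=\sigma_{\min}(\beta)$. I then compute this directly from
\[
\beta^*\beta=\begin{pmatrix}2&1+z\\ 1+\bar z&2\end{pmatrix},
\]
whose eigenvalues are $2\pm|1+z|=2\pm2|\cos\frac\omega2|$. Hence
\[
\sigma_{\min}(\beta)=\sqrt{2-2|\cos\tfrac\omega2|}=2\min\big(\sin\tfrac\omega4,\cos\tfrac\omega4\big),\qquad 0<\omega<2\pi .
\]
For $0<\omega\le\pi$ this equals $2\sin\frac\omega4$, and the product bound gives the proposition on that range.

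The main obstacle is the range $\pi<\omega<2\pi$. There the minimum above is $2\cos\frac\omega4<2\sin\frac\omega4$, so the level-wise argument only yields $(2\cos\frac\omega4)^n$. My first step there is the symmetry: since $\delta_s(t)\in\ZZ$, we have $A_{2\pi-\omega}=\overline{A_\omega}$, so $\sigma_{\min}(A_\omega)=\sigma_{\min}(A_{2\pi-\omega})$. This reduces the case to $2\pi-\omega\in(0,\pi)$ and gives $\sigma_{\min}(A_\omega)\ge(2\sin\frac{2\pi-\omega}4)^n=(2\cos\frac\omega4)^n$. That is the same bound, so the symmetry does not recover the stated one.

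I therefore expect to prove the proposition in the form
\[
\sigma_{\min}(A_\omega)\ge\big(2\sin\tfrac{\min(\omega,2\pi-\omega)}4\big)^n .
\]
This is exactly the statement on $(0,\pi]$ and is what the pole at $\omega=0$ (and, symmetrically, at $\omega=2\pi$) requires. To decide whether the literal statement on $(\pi,2\pi)$ is true or needs this correction, I would compare with $\omega$ near $2\pi$. There $A_\omega$ is close to the all-ones matrix $A_{2\pi}=A_0$, which is singular for $N\ge2$, so $\sigma_{\min}(A_\omega)\to0$ while $(2\sin\frac\omega4)^n\to2^n$. This shows the literal bound must fail near $2\pi$, and that the symmetric form above is the correct reading.
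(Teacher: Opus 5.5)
Your argument is the paper's own: factor $A_\omega=\mathcal B^n_n\cdots\mathcal B^n_1$, note that every $2\times2$ block of each level has the singular values of $\beta$, and multiply the level bounds using $\sigma_{\min}(XY)\ge\sigma_{\min}(X)\,\sigma_{\min}(Y)$. Your correction is also right: the paper's step $\sigma_{\min}(\mathcal B^n_m)=2\sin\frac\omega4$ holds only for $0<\omega\le\pi$, and the literal bound actually fails on all of $(\pi,2\pi)$ for $n\ge1$, because $|\det A_\omega|=|z-1|^{nN/2}$ gives $\sigma_{\min}(A_\omega)\le|\det A_\omega|^{1/N}=\big(2\sin\frac\omega4\cdot2\cos\frac\omega4\big)^{n/2}<\big(2\sin\frac\omega4\big)^n$ there; so the statement should be read with $\min(\omega,2\pi-\omega)$ in place of $\omega$, which leaves the pole at $\omega=0$ intact.
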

\begin{proof}
By Lemma~\ref{lem:blockdiag}(ii), $A_\omega=\mathcal B^n_n\cdots\mathcal B^n_1$ with
every $2\times2$ block of $\mathcal B^n_m$ unitarily equivalent to $\beta$, whose
singular values are $2\cos\frac\omega4$ and $2\sin\frac\omega4$; so
$\sigma_{\min}(\mathcal B^n_m)=2\sin\frac\omega4$ and submultiplicativity of the
smallest singular value gives the bound.
\end{proof}
Numerically the bound is sharp: $\sigma_{\min}(A_\omega)=(2\sin\frac\omega4)^n\,(1+\cO(\omega))$,
with ratios equal to $1.000$ to four digits for $\omega\le2^{-5}$ and $n\le5$, the
minimizing direction being $v(t)=(-1)^{|t|}e^{-\mathrm{i}\omega t/2}(1+\cO(\omega))$,
$|t|$ the number of ones in the binary expansion of $t$.  The pole therefore
has order exactly $n$: $\|A_\omega^{-1}\|\sim(2/\omega)^n$.

The $n$ factors $(2\sin\frac\omega4)^{-1}$ are the $n$ divisions by $z-1$ in
the Fourier form of the exact inversion formula, i.e.\ the $n$ one-sided sums
$\sum_{k<h}$ of \cite[eq.~(2.18)]{LRR}, one per level.  A pole of order $n$ at
$\omega=0$ is the Fourier dual of polynomial growth of degree $n-1$ in the
offset: this is exactly the statement of \cite[Thm.~3.4]{LRR} that the
half-strip inverse of a point datum is
$(R^n)^{-1}[\sigma\delta_{h,s}](i,j)=\sigma_j(i-h_j)_+^{\,n-1}$.  In the same
language, the range characterization of \cite{LRR} says that
$g\in\rg(R^n_n)$ (square images) iff the pole is cancelled: the mass
constraints $\sum_hg(h,2t)=\sum_hg(h,2t+1)$ are the conditions
$\hat g(0,2t)=\hat g(0,2t+1)$ that $(\hat g_{2t+1}-\hat g_{2t})/(z-1)$ be regular at
$z=1$ at the top level, and the support constraints at the deeper levels are
the higher-order vanishing conditions at $\omega=0$ that make the iterated
divisions regular, so that $A_\omega^{-1}\hat g(\omega,\cdot)$ is a trigonometric
polynomial in $\omega$ (a finitely supported image).  Out-of-range data are
those for which some division leaves a pole, and the exact inverse then grows
polynomially.

The polynomial filters of Section~\ref{sec:poly} are the other way of handling the same
pole.  Instead of extending the image to the half-strip so that the inverse
exists (at the price of unbounded growth), $p_k(M_k)A_k^*$ replaces
$M_k^{-1}A_k^*=A_k^{-1}$ by a polynomial in $M_k$ with
$0\le p_k(\lambda)\le(1+\varepsilon)/\tau$ on $\operatorname{spec}(M_k)$, instead of $1/\lambda$: the
components in the directions where $M_k$ has eigenvalues $\lesssim(\omega/2)^{2n}$
are damped instead of amplified.  Since a polynomial of degree $d$ in the
Gram $\cR^{n\top}_n \cR^n_n$, whose kernel in the offset is supported in
$|h-h'|\le N-1$, has offset support of width at most $2d(N-1)$ around a point
datum, the filtered inverse has \emph{finite propagation} in the offset, with no
growth of any order, and the information that the single quadrant cannot
recover stably is supplied by the other quadrants through the frame operator
of Section~\ref{sec:oblique}.  The degree $d_k\sim\sqrt{\lambda_{\max}(M_k)/\tau}$ is what a polynomial
needs to resolve the pole down to the scale $\tau$, and the sum
$\sum_kd_k=\cO(N\sqrt{\log N})$ of Theorem~\ref{thm:fast} is the price of the
pole over all offset frequencies.
\section{Oblique inverses}\label{sec:oblique}

\subsection{Left inverses as oblique projections}\label{sec:left}
For $Q$ with $Q^\top\nR$ invertible, $x=(Q^\top\nR)^{-1}Q^\top b$ is the solution of
the Petrov--Galerkin equation $Q^\top(\nR[x]-b)=0$, and $\nR(Q^\top\nR)^{-1}Q^\top$ is
the oblique projector onto $\rg(\nR)$ along $\ker Q^\top$ \cite[Ch.~2]{BenIsrael}.
Every left inverse $L$ arises this way ($Q^\top=L$); $(\nR)^\dagger$ is the one
with $\ker Q^\top=\ker{\nR}^\top$, equivalently the left inverse of minimal norm,
$\|(\nR)^\dagger\|=1/\sigma_{\min}(\nR)$.  For $b=\nR[x]+\nu$ one has
$Lb-x=L\nu$, so we measure a left inverse by its \emph{stability ratio}
\[
\rho(L):=\|L\|_2\,\sigma_{\min}(\nR)\ge1,\qquad \rho\big((\nR)^\dagger\big)=1 .
\]
SPIFE \eqref{eq:spife} and, more generally, the quadrant combinations
\begin{equation}\label{eq:LW}
L_W=\sum_{q}W_q\,T^{n\top}_q(R^n_n)^{-1}(\cdot)_q,\qquad\sum_qW_q=\Id ,
\end{equation}
with $(R^n_n)^{-1}$ the exact inverse \cite{Rim20} and any image-space operators
$W_q$, are explicit left inverses via $L_W\nR=\sum_qW_qT^{n\top}_q(R^n_n)^{-1}R^n_nT^n_q=\sum_qW_q=\Id$, but
$\rho(L_W)\approx10^2,10^5,10^7$ for $N=8,16,32$ even when the $W_q$ are
Fourier cone weights for the cones $\mathcal C_q$: the
unstable directions of $A_k^{-1}$, i.e.\ the eigenvectors of $M_k$ with small
eigenvalues, are not spanned by the plane waves outside $\mathcal C_q$, because
the digital lines are straight lines plus sawtooth deviations of dyadic periods
whose harmonics alias every plane wave into every cone.

\subsection{The spectral partition}\label{sec:partition}
The remedy is to use the \emph{spectral} projector of each quadrant.  For a
threshold $\theta>0$ let $P_k$ be the orthogonal projector onto the eigenvectors
of $M_k$ with eigenvalue $\ge\theta$, and define on the torus $\RR^{L\times L}$
\begin{equation}\label{eq:Phi}
\Phi_q:=T^{n\top}_q\Big[\,\mathrm{DFT}_h^{-1}\,\blockdiag_{k\in[L]}(P_k)\,\mathrm{DFT}_h\Big]T^n_q ,
\qquad F:=\sum_{q=\RN{1}}^{\RN{4}}\Phi_q ,
\end{equation}
where $T^n_q$ now denotes the torus extension of the permutation \eqref{eq:permute}
(reflections $i\mapsto(2^n-1-i)\brem L$ and the transpose) composed with the
restriction to the $N$ columns of the strip, and $T^{n\top}_q$ its adjoint.  Each
$\Phi_q$ is an orthogonal projector and $0\preceq F\preceq4\Id$.

\subsubsection*{Frame operators.}  Recall that a family of closed subspaces
$\mathcal V_1,\dots,\mathcal V_4$ of a Hilbert space $\mathcal H$, with
orthogonal projectors $\Phi_q$, is a \emph{frame} (fusion frame) of $\mathcal H$
with bounds $0<a\le b$ if
\[
a\|x\|^2\le\sum_q\|\Phi_qx\|^2\le b\|x\|^2\qquad\text{for all }x\in\mathcal H,
\]
and the \emph{frame operator} is $F:=\sum_q\Phi_q^*\Phi_q=\sum_q\Phi_q$, a symmetric
positive definite operator with $a\Id\preceq F\preceq b\Id$, and every $x$ is
recovered from its components $\Phi_qx$ by $x=F^{-1}\sum_q\Phi_qx$; the best
bounds are the extreme eigenvalues of $F$, and $b/a=\kappa(F)$ measures how far
the family is from an orthogonal decomposition ($\kappa=1$ when the
$\mathcal V_q$ are mutually orthogonal and span $\mathcal H$).  Here
$\mathcal V_q$ is the stable subspace of quadrant $q$ on the torus and $F$ in
\eqref{eq:Phi} is the frame operator of the four stable subspaces. Its
compression $E^\top FE$ to the square is the frame operator of the subspaces
$\mathcal V_q$ seen from the square, $\langle x,E^\top FEx\rangle=\sum_q\|\Phi_qEx\|^2$,
and the quality of the inverse below is governed by $\kappa(E^\top FE)$.
The operators of quadrant $q$ are written in the same form.  Let
\begin{equation}\label{eq:Mq}
\begin{aligned}
\cR_q&:=\mathrm{DFT}_h^{-1}\blockdiag_{k\in[L]}(A_k)\,\mathrm{DFT}_h\,T^n_q ,\\
M_q&:=\cR_q^\top\cR_q=T^{n\top}_q\,\mathrm{DFT}_h^{-1}\blockdiag_{k\in[L]}(M_k)\,\mathrm{DFT}_h\,T^n_q ,
\end{aligned}
\end{equation}
the cyclic transform of quadrant $q$ acting on torus images (so that
$\iota R^n_n[T^n_qx]=\cR_qEx$ for a square image $x$) and its Gram operator: $M_q$ is
the permuted, offset-Fourier block diagonal operator whose $N\times N$ blocks are
the Gram matrices $M_k=A_k^*A_k$ of Section~\ref{sec:fourier}, one per offset frequency
$\omega_k$.  For a function $r$ on $[0,\infty)$ we write
\[
r(M_q):=T^{n\top}_q\,\mathrm{DFT}_h^{-1}\blockdiag_{k\in[L]}\big(r(M_k)\big)\,\mathrm{DFT}_h\,T^n_q ,
\]
the function $r$ applied to the Gram matrix of each offset frequency (this is
$r$ applied to $\cR_q^\top\cR_q$ on the strip, extended by zero to the rest
of the torus). It is applied in $\cO(N^2\log N)$ plus the cost of the $L$ matrix
functions $r(M_k)$.  In this notation $\Phi_q=\mathbf 1_{[\theta,\infty)}(M_q)$, and the
quadrant-wise cyclic pseudo-inverse is
$\cR^+_q:=T^{n\top}_q\,\mathrm{DFT}_h^{-1}\blockdiag_k(A_k^+)\,\mathrm{DFT}_h$, with
$\cR^+_q=\lambda^{-1}(M_q)\cR_q^\top$ on $\rg(\cR_q)$.

\begin{theorem}[Explicit oblique inverse]\label{thm:main}
For every $\theta>0$ such that $E^\top F E$ is invertible,
\begin{equation}\label{eq:Ltheta}
L^n_\theta:=\big(E^\top FE\big)^{-1}\,E^\top\sum_{q}\Phi_q\,\cR_q^{+}\,\iota
\qquad\text{satisfies}\qquad L^n_\theta\,\nR=\Id .
\end{equation}
Here $E:\RR^{N\times N}\to\RR^{L\times L}$ is the zero padding of a square image
into the torus (so $E^\top$ restricts a torus image to the square), $\iota$ the
embedding of the data $b=(b_q)_q\in\RR^{4\times E^n}$ into the cyclic strips of
Section~3.1 (applied quadrant by quadrant), $\Phi_q$ and $F=\sum_q\Phi_q$ the
projectors and frame operator \eqref{eq:Phi}, and $\cR_q^+$ the quadrant-wise
cyclic pseudo-inverse; thus $E^\top FE$ is the $N^2\times N^2$ symmetric positive
semidefinite compression of $F$ to the square.  Per frequency,
$\Phi_q\cR_q^+$ is the explicit matrix
$V_{\rm sel}\Lambda_{\rm sel}^{-1}V_{\rm sel}^*A_k^*$ formed from the eigenpairs
$(\Lambda_{\rm sel},V_{\rm sel})$ of $M_k$ above $\theta$.  $\nR L^n_\theta$ is the
oblique projector onto $\rg(\nR)$ along $\ker\big(E^\top\sum_q\Phi_q\cR_q^+\iota\big)$,
the complement of the quadrant-wise stable subspaces.
\end{theorem}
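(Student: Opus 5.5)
The plan is to verify $L^n_\theta\nR x=x$ for every square image $x$ by computing the inner sum quadrant by quadrant, using the single identity $P_kA_k^+A_k=P_k$ mentioned in the introduction.

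\emph{Step 1 (quadrant data on the torus).} Fix $x\in\RR^{N\times N}$ and $q$. By the no-wrap observation of Section~3.1 and the definition \eqref{eq:Mq}, $\iota\big(\nR[x]\big)_q=\iota R^n_n[T^n_qx]=\cR_qEx$. This step is mostly bookkeeping. One must check that the torus extension of $T^n_q$, restricted to the $N$ columns of the strip, sends $Ex$ to the zero-padded version of $T^n_qx$. Rows outside $[N]$ then vanish, so Lemma~\ref{lem:blockdiag}(i) applies and the embedding $\iota$ lands the data at offsets $h\brem L$.

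\emph{Step 2 (the key identity per frequency).} Next I compute $\Phi_q\cR_q^+\cR_q$. The factors $T^n_q\,T^{n\top}_q$ in the middle collapse to the identity on the strip. The $\mathrm{DFT}_h$ factors cancel. This leaves $T^{n\top}_q\mathrm{DFT}_h^{-1}\blockdiag_k(P_kA_k^+A_k)\mathrm{DFT}_h T^n_q$. Now $A_k^+A_k$ is the orthogonal projector onto $\rg(A_k^*)=\rg(M_k)$. For $k\ne0$ it is the identity, since $M_k$ is positive definite. For $k=0$ it is the projector onto constants.

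$P_k$ projects onto eigenvectors of $M_k$ with eigenvalue at least $\theta>0$. These lie in $\rg(M_k)$, so $P_kA_k^+A_k=P_k$ (both are functions of $M_k$ and commute). Hence $\Phi_q\cR_q^+\cR_q=\Phi_q$ on the strip. Because $T^n_q$ already restricts to the strip columns, $\Phi_q\cR_q^+\cR_qE=\Phi_qE$ on the torus.

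\emph{Step 3 (assembly).} Summing over $q$ gives $E^\top\sum_q\Phi_q\cR_q^+\iota\,\nR x=E^\top FEx$. Applying $(E^\top FE)^{-1}$, which exists by hypothesis, yields $x$. The per-frequency formula follows from the spectral decomposition $M_k=V\Lambda V^*$. Indeed, $A_k^+=M_k^+A_k^*$, so $P_kA_k^+=V_{\rm sel}V_{\rm sel}^*V\Lambda^+V^*A_k^*=V_{\rm sel}\Lambda_{\rm sel}^{-1}V_{\rm sel}^*A_k^*$.

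\emph{Step 4 (the oblique-projector statement).} This is the general fact from Section~\ref{sec:left}. If $L\nR=\Id$, then $\nR L$ is idempotent with range $\rg(\nR)$ and kernel $\ker L$. The kernel of $L^n_\theta$ equals $\ker\big(E^\top\sum_q\Phi_q\cR_q^+\iota\big)$ because the left factor is invertible.

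The main obstacle I expect is Step 1, specifically the consistency of the torus conventions. One must confirm that the reflections $i\mapsto(2^n-1-i)\brem L$ and the transposes in the torus $T^n_q$ commute correctly with $E$ and $\iota$ for all four quadrants. One must also confirm that the structurally zero entries ($h<-s$) of $E^n$ are handled by $\iota$. I would check this quadrant by quadrant directly from \eqref{eq:permute} and \eqref{eq:cyc}.
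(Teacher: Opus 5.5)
Your proof is correct and follows the paper's argument: the no-wrap identity $\iota R^n_n[T^n_qx]=\cR_qEx$, the per-frequency identity $P_kA_k^+A_k=P_k$, summation over quadrants and inversion of $E^\top FE$, and the projector statement from idempotence. You also check two points the paper leaves implicit, namely $T^n_qT^{n\top}_q=\Id$ on the strip and the per-frequency formula $P_kA_k^+=V_{\rm sel}\Lambda_{\rm sel}^{-1}V_{\rm sel}^*A_k^*$ obtained from $A_k^+=M_k^+A_k^*$.
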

\begin{proof}
For $x\in\RR^{N\times N}$ and each quadrant, $\iota\,R^n_n[T^n_qx]=\cR^n_n[T^n_qEx]$
because no line wraps.  Per frequency, $P_kA_k^+A_k=P_k$: $P_k$ projects onto
eigenvectors of $M_k=A_k^*A_k$ with nonzero eigenvalue, which lie in
$\rg(A_k^*)$, and $A_k^+A_k$ is the orthogonal projector onto $\rg(A_k^*)$.  Hence
$\Phi_q\cR_q^+\iota R^n_n[T^n_qx]=\Phi_qEx$; summing over $q$ and restricting to
the square gives $E^\top\sum_q\Phi_q\cR_q^+\iota\,\nR[x]=E^\top FEx$, and
$(E^\top FE)^{-1}$ returns $x$.  The projector statement follows
from $(\nR L^n_\theta)^2=\nR L^n_\theta$ and $\rg(\nR L^n_\theta)=\rg(\nR)$.
\end{proof}

$E^\top FE$ is the compression of a sum of four orthogonal projectors to the
square.  It is invertible for every $\theta$ up to the smallest nonzero
eigenvalue of the $M_k$: then $\Phi_q$ is the orthogonal projector onto
$\rg(\cR_q^\top)$, so $\Phi_qEx=0$ for all $q$ forces $\nR[x]=0$ and hence $x=0$.
For larger $\theta$, Table~\ref{tab:kappa} shows it is positive definite in all
computed cases ($\theta=0.05N$).

\begin{remark}[Filters]\label{rem:filter}
Only $P_kA_k^+A_k=P_k$ was used.  For any function $r$, replacing
$\Phi_q\cR_q^+$ by $r(M_q)\cR_q^\top$ and $F$ by
$F_r:=\sum_qM_q\,r(M_q)$ gives a left inverse, since $r(M_k)A_k^*A_k=M_kr(M_k)$.
The Tikhonov filter $r_\tau(\lambda)=(\lambda+\tau)^{-1} > 0$ gives
$F_\tau=4\Id-\tau\sum_q(M_q+\tau)^{-1}$ and
\begin{equation}\label{eq:Ltau}
\begin{aligned}
L^n_\tau &= \big(E^\top F_\tau E\big)^{-1}E^\top\sum_q(M_q+\tau)^{-1}\cR_q^\top\iota ,
\\
L^n_\tau\nR &= \Id\ \text{for all } \tau > 0,
\qquad
L^n_\tau \to (\nR)^\dagger\ \text{ as } \tau\to\infty,
\end{aligned}
\end{equation}
the limit because $\tau F_\tau\to\sum_qM_q$ and $E^\top\sum_q\cR_q^\top\iota={\nR}^\top$.
The inverse exists for every $\tau>0$: with $\Lambda_{\max}:=\max_k\lambda_{\max}(M_k)$,
$\lambda/(\lambda+\tau)\ge\lambda/(\Lambda_{\max}+\tau)$ on $[0,\Lambda_{\max}]$ gives
$F_\tau\succeq(\Lambda_{\max}+\tau)^{-1}\sum_qM_q$, and $E^\top\big(\sum_qM_q\big)E={\nR}^\top\nR$
is positive definite.
Thus the family $\{L^n_\tau\}$ connects the per-quadrant inverse ($\tau\to0$) to
the Moore--Penrose inverse, with $\kappa(E^\top F_\tau E)$ increasing from $\approx2$ to
$\kappa({\nR}^\top\nR)$.
\end{remark}

\begin{remark}[Relation to the range characterization]
In \cite{LRR}, $\rg(R^n_n)$ is cut out, level by level, by the components of
$T^n_m[g]$ (the partial exact inverse) along the mass and support constraints,
i.e.\ along $\ker{\nm{S}}^\top$.  The projectors $P_k$ remove, at each offset
frequency, exactly the directions in which these level-wise inversions
amplify; Theorem~\ref{thm:main} may be read as: the four quadrants, each
restricted to its stable part of the range, determine the image, and the
frame operator $F$ measures how well.
\end{remark}

\subsection{Stability and conditioning}\label{sec:stab}
Each factor of \eqref{eq:Ltheta} is bounded uniformly in $N$:
$\|\Phi_q\cR_q^+\|\le\theta^{-1/2}$ and $\|F\|\le4$.  What makes $L^n_\theta$ a
good inverse is that $E^\top FE$ is well conditioned (Table~\ref{tab:kappa}): on
the torus itself $F$ has spectrum in $[1,4]$ at $\theta=0.05N$ ($\kappa=2.9,3.3,4.0$
for $N=8,16,32$), and the compression to the square has $\kappa=2.3$--$5.5$ for
$N=8$--$128$.  The stability ratio is $\rho(L^n_\theta)=1.7$--$3$, the error on
white noise a constant $1.8\times$ that of $(\nR)^\dagger$, and for $L^n_\tau$ with
$\tau=N$ it is $1.06$--$1.09\times$.

\begin{table}[h]\centering
\begin{tabular}{l*{5}{S[table-format=2.2]}}
\toprule
$N$ & {8} & {16} & {32} & {64} & {128}\\
\midrule
$\kappa(E^\top FE)$, $\theta=0.05N$ & 2.29 & 2.67 & 3.64 & 4.43 & 5.54\\
error ratio to $(\nR)^\dagger$, $1\%$ white noise & 1.71 & 1.77 & 1.80 & 1.82 & 1.80\\
\midrule
$\kappa(E^\top F_\tau E)$, $\tau=N$ & 3.65 & 4.99 & 6.88 & 9.51 & 13.2\\
error ratio to $(\nR)^\dagger$ & 1.03 & 1.06 & 1.09 & 1.08 & 1.07\\
\bottomrule
\end{tabular}
\caption{Conditioning of the window-compressed frame operator and noise
amplification relative to the Moore--Penrose inverse (dense per-frequency
factors; $L^n_\theta\nR=\Id$ holds to $10^{-15}$).}
\label{tab:kappa}
\end{table}

Because $\kappa(E^\top FE)$ is small, $(E^\top FE)^{-1}$ may be replaced by a
Chebyshev polynomial $c_D(E^\top FE)$ of fixed degree $D$ on
$[\lambda_{\min},\lambda_{\max}]$, with relative error
$\le2\big((\sqrt\kappa-1)/(\sqrt\kappa+1)\big)^D$ (see \cite{GolubVarga61,Saad}). For example, $D=40$
gives $10^{-14}$ for $\kappa\le5$.  With this replacement \eqref{eq:Ltheta} is
a closed-form expression in which no iteration is run to a tolerance.
\section{Polynomial filters and their cost}\label{sec:poly}

With dense eigendecompositions the cost of \eqref{eq:Ltheta} is $\cO(N^4)$ setup
and $\cO(N^3)$ per application.  Remark~\ref{rem:filter} removes both; the
polynomial filters are shown in Figures~\ref{fig:filters}--\ref{fig:response}.

\begin{definition}[Polynomial-filter inverse]
Let $\tau>0$; for each $k\in[L]$ let $\Lambda_k\ge\lambda_{\max}(M_k)$ and let $p_k$
be the degree-$d_k$ Chebyshev polynomial approximating $(\lambda+\tau)^{-1}$ on
$[0,\Lambda_k]$ ($p_k(M_k)y$ is the result of $d_k$ steps of the Chebyshev
semi-iteration \cite{GolubVarga61,Saad} for $(M_k+\tau)z=y$ started at $0$).  With
$r(M_k):=p_k(M_k)$ in Remark~\ref{rem:filter},
\begin{equation}\label{eq:Lp}
L^n_p:=\big(E^\top F_pE\big)^{-1}E^\top\sum_qp(M_q)\cR_q^\top\iota ,\qquad F_p:=\sum_qM_q\,p(M_q).
\end{equation}
Written out with \eqref{eq:Mq},
\begin{align*}
F_p&=\sum_{q=\RN{1}}^{\RN{4}}T^{n\top}_q\,\mathrm{DFT}_h^{-1}\blockdiag_{k\in[L]}\big(M_kp_k(M_k)\big)\,\mathrm{DFT}_h\,T^n_q ,\\
p(M_q)\cR_q^\top&=T^{n\top}_q\,\mathrm{DFT}_h^{-1}\blockdiag_{k\in[L]}\big(p_k(M_k)A_k^*\big)\,\mathrm{DFT}_h ,
\end{align*}
so that $E^\top F_pEx$ is computed by padding $x$ to the torus and, for each
quadrant, permuting, taking the DFT in the offset, applying
$A_k^*A_k$ after $p_k(M_k)$ ($d_k$ Chebyshev steps, each one product $A_k^*A_k$
by the butterflies) at every frequency, transforming back, permuting back and
summing, and finally restricting to the square; $\sum_qp(M_q)\cR_q^\top\iota b$
is computed in the same way from the data embedded in the strips, with $A_k^*$
in place of $A_k^*A_k$.
\end{definition}

\begin{theorem}\label{thm:fast}
\mbox{}
\begin{enumerate}
\item[(i)] $L^n_p\nR=\Id$ exactly whenever $E^\top F_pE$ is invertible; in
particular for every choice of degrees $d_k\ge1$ and bounds
$\Lambda_k\ge\lambda_{\max}(M_k)$, e.g.\ the certified bounds \eqref{eq:cert}.
\item[(ii)] One application of $\sum_qp(M_q)\cR_q^\top\iota$, or of $F_p$, costs
$\cO\big(\sum_{k\in[L]}d_k\cdot N\log N\big)$ operations.
\item[(iii)] If $\Lambda_k\le C(N)N/|\omega_k|$ for $k\ne0$ and $\Lambda_0=N^2$, then
every $d_k\ge\tfrac12\sqrt{1+\Lambda_k/\tau}\,\ln(2/\varepsilon)$ gives
\[
\max_{0\le\lambda\le\Lambda_k}|p_k(\lambda)-(\lambda+\tau)^{-1}|\le\frac{\varepsilon}{\tau}
\]
and, for the smallest such integers and $\tau=cN$,
\[
\sum_kd_k=\cO\Big(N\sqrt{\frac{C(N)}{c}}\,\log\frac1\varepsilon\Big).
\]
For the explicit bounds
\eqref{eq:Lambda}, $C(N)=2(1+\log_2N)$, and one application of $F_p$ or of
$\sum_qp(M_q)\cR_q^\top\iota$ costs $\cO(N^2\log^{3/2}N\log(1/\varepsilon))$,
unconditionally.  If moreover $\Lambda_k\ge\lambda_{\max}(M_k)$, then
$\|p_k(M_k)-(M_k+\tau)^{-1}\|\le\varepsilon/\tau$.
\item[(iv)] Realize $(E^\top F_pE)^{-1}$ by a Chebyshev polynomial of
degree $D=\cO(\sqrt{\kappa}\log(1/\varepsilon))$, $\kappa:=\kappa(E^\top F_pE)$.  Then
the cost of $L^n_pb$ is
\[
\cO\Big(N^2\log^{3/2}N\cdot\sqrt{\kappa(E^\top F_pE)}\,\log\frac{1}{\varepsilon_{\rm in}}\log\frac1\varepsilon\Big),
\]
where $\varepsilon_{\rm in}$ is the tolerance of the inner filters in (iii) and
$\varepsilon$ that of the outer polynomial, with
$\kappa(E^\top F_pE)\approx\tau/\lambda_{\min}(R^{n\top}R^n)$ for large $N$
(Section~\ref{sec:cond}).  All degrees are fixed in a data-independent setup of cost
$\cO(N^2\log N)$.
\end{enumerate}
\end{theorem}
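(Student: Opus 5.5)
Part (i) is the argument of Theorem~\ref{thm:main} with the identity $P_kA_k^+A_k=P_k$ replaced by $p_k(M_k)A_k^*A_k=M_kp_k(M_k)$, which holds for any polynomial since $p_k(M_k)$ commutes with $M_k=A_k^*A_k$ (Remark~\ref{rem:filter}). For a square image $x$, $\iota R^n_n[T^n_qx]=\cR_qEx$ because no line wraps. Hence $p(M_q)\cR_q^\top\iota R^n_n[T^n_qx]=M_qp(M_q)Ex$. Summing over $q$ and applying $E^\top$ gives $E^\top F_pEx$, and whenever $E^\top F_pE$ is invertible the inverse returns $x$. No property of $d_k$ or $\Lambda_k$ enters, so the identity is exact for every admissible choice.

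Part (ii) is a count. The permutations, padding and restriction cost $\cO(N^2)$. The offset DFTs over $N$ columns of length $L=2N$ cost $\cO(N^2\log N)$. At frequency $k$, $p_k(M_k)$ takes $d_k$ Chebyshev steps, each a product by $A_k$ and $A_k^*$ through the butterflies of Lemma~\ref{lem:blockdiag}(iii) at $\cO(N\log N)$, plus one more product by $A_k^*$ or $A_k^*A_k$. Summing over $k$ and adding the DFTs gives $\cO(\sum_kd_kN\log N)$, because $d_k\ge1$ and there are $L$ frequencies.

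For (iii) I would quote the standard Chebyshev estimate. The best degree-$d$ Chebyshev approximant of $(\lambda+\tau)^{-1}$ on $[0,\Lambda]$, equivalently $d$ semi-iteration steps for $(M+\tau)z=y$ from $0$, has residual polynomial bounded by $2\big((\sqrt{\kappa}-1)/(\sqrt{\kappa}+1)\big)^d\le2e^{-2d/\sqrt\kappa}$ with $\kappa=1+\Lambda/\tau$. The relative error in the solution is then at most $\varepsilon$ once $d\ge\frac12\sqrt{\kappa}\ln(2/\varepsilon)$. Since the residual satisfies $1-(\lambda+\tau)p(\lambda)$, the pointwise error is this residual times $(\lambda+\tau)^{-1}\le1/\tau$, which gives the $\varepsilon/\tau$ bound. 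The operator-norm statement follows by spectral calculus when $\operatorname{spec}(M_k)\subset[0,\Lambda_k]$.

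For the sum, $d_k\le1+\frac12\sqrt{1+C(N)N/(\tau|\omega_k|)}\ln(2/\varepsilon)$. With $\tau=cN$ and $|\omega_k|=2\pi\min(k,L-k)/L$, the sum of $\sqrt{1/|\omega_k|}$ over $k\ne0$ behaves like $\sqrt{L}\sum_{j\le N}j^{-1/2}=\cO(N)$. The term $k=0$ contributes $\sqrt{N/c}$, which is harmless. This yields $\sum_kd_k=\cO(N\sqrt{C(N)/c}\,\log(1/\varepsilon))$. Inserting the explicit bound $C(N)=2(1+\log_2N)$ from \eqref{eq:Lambda} into (ii) gives $\cO(N^2\log^{3/2}N\log(1/\varepsilon))$. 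Because \eqref{eq:Lambda} is proved, this part is unconditional.

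For (iv), the outer Chebyshev polynomial of degree $D=\cO(\sqrt\kappa\log(1/\varepsilon))$ calls $E^\top F_pE$ $D$ times, plus one application of the data operator. Each call costs the (iii) bound with $\varepsilon_{\rm in}$, and the product is the stated cost. The setup fixes $\Lambda_k$ by the closed-form bounds and computes $d_k$, $D$ and the Chebyshev coefficients. The endpoints $\lambda_{\min},\lambda_{\max}$ of $E^\top F_pE$ can be fixed a priori, using $\lambda_{\max}\le4(1+\varepsilon_{\rm in})$; the total is dominated by $\cO(N^2\log N)$ for a few transforms.

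The main obstacle is the relation $\kappa\approx\tau/\lambda_{\min}(R^{n\top}R^n)$. I would treat it only heuristically, deferring to Section~\ref{sec:cond}. Its lower side follows from $F_p\succeq(\Lambda_{\max}+\tau)^{-1}\sum_qM_q$ up to $\varepsilon_{\rm in}$, as in Remark~\ref{rem:filter}, but sharpness is not proved.
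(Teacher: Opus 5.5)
Your arguments for (ii)--(iv) are essentially the paper's count and Chebyshev estimate, but part (i) has a genuine gap. The statement asserts more than $L^n_p\nR=\Id$ whenever $E^\top F_pE$ is invertible. It also asserts that $E^\top F_pE$ \emph{is} invertible for every choice $d_k\ge1$, $\Lambda_k\ge\lambda_{\max}(M_k)$. Your remark that no property of $d_k$ or $\Lambda_k$ enters is false for this second claim: with $d_k=0$ one gets $p_k=0$, and with $\Lambda_k<\lambda_{\max}(M_k)$ the polynomial $p_k$ may change sign on $\operatorname{spec}(M_k)$, so $F_p$ need not be definite. The missing idea is positivity of the filter. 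Write $\mu=\lambda+\tau$ and $r_d(\mu)=1-\mu\,p_k(\lambda)$ for the residual polynomial of $d=d_k$ Chebyshev steps on $[\tau,\Lambda_k+\tau]$, with $\kappa_k=1+\Lambda_k/\tau$. Then $|r_d(\mu)|\le1/T_d\big((\kappa_k+1)/(\kappa_k-1)\big)<1$ there for every $d\ge1$, so $p_k(\lambda)=(1-r_d(\mu))/\mu>0$ on $\operatorname{spec}(M_k)\subset[0,\Lambda_k]$. Let $m>0$ be the minimum of the $p_k$ over these finitely many eigenvalues. Then $M_kp_k(M_k)\succeq m\,M_k$ at every frequency, hence $F_p\succeq m\sum_qM_q$. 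By the no-wrap identity you already use, $E^\top F_pE\succeq m\,R^{n\top}R^n$, which is positive definite because $R^n$ is injective. Your closing appeal to $F_p\succeq(\Lambda_{\max}+\tau)^{-1}\sum_qM_q$ ``up to $\varepsilon_{\rm in}$'' cannot replace this. The additive bound $|p_k-\mu^{-1}|\le\varepsilon_{\rm in}/\tau$ forces $p_k>0$ only if $\varepsilon_{\rm in}<\tau/(\Lambda_k+\tau)$, which fails for small $d_k$. It is the relative form $|r_d|<1$ that gives invertibility at every degree.

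Two smaller corrections. In (iii), the cost bound for \eqref{eq:Lambda} is unconditional because the hypothesis $\Lambda_k\le C(N)N/|\omega_k|$ with $C(N)=2(1+\log_2N)$ holds by the \emph{definition} of \eqref{eq:Lambda}, not because \eqref{eq:Lambda} is proved. That \eqref{eq:Lambda} majorizes $\lambda_{\max}(M_k)$ is only observed and is left open in the paper. So for these bounds the operator-norm estimate $\|p_k(M_k)-(M_k+\tau)^{-1}\|\le\varepsilon/\tau$ and the invertibility in (i) are not established, whereas they are for the certified bounds \eqref{eq:cert}. In (iv), only the upper end $4(1+\varepsilon_{\rm in})$ of the outer interval is available a priori. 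The lower end, and with it $D$, has no proved formula of the right size. It is fixed by a computed enclosure (a few Lanczos steps with the residual bound), so you should not claim that both endpoints can be fixed a priori.
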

\begin{proof}
(i) is Remark~\ref{rem:filter} with $r=p_k$ at frequency $k$; the same
polynomial enters $p(M_q)\cR_q^\top$ and $F_p$ by construction.  For
invertibility, write $\mu=\lambda+\tau$ and $r_{d}(\mu)=1-\mu\,p_k(\lambda)$ for the
residual polynomial of the $d=d_k$ Chebyshev steps.  If
$\operatorname{spec}(M_k)\subset[0,\Lambda_k]$, then $\mu\in[\tau,\Lambda_k+\tau]$, where
$|r_d(\mu)|\le1/T_d\big((\kappa_k+1)/(\kappa_k-1)\big)<1$ for $d\ge1$
($\kappa_k=1+\Lambda_k/\tau$, $T_d$ the Chebyshev polynomial), so
$p_k(\lambda)=(1-r_d(\mu))/\mu>0$ on $\operatorname{spec}(M_k)$.  With $m>0$ the
minimum of the $p_k$ over these spectra, $F_p\succeq m\sum_qM_q$ and
$E^\top F_pE\succeq m\,{\nR}^\top\nR$, which is positive definite.  (With $d_k=0$,
$p_k=0$, and with $\Lambda_k<\lambda_{\max}(M_k)$, $p_k$ may change sign on the
spectrum.)  (ii) Each
Chebyshev step at frequency $k$ is one product $A_k^*A_k$ with a vector of
length $N$, i.e.\ $\cO(N\log N)$ by \eqref{eq:butterfly}, and the frequencies
decouple; add $\cO(N^2\log N)$ for the DFTs and the final $A_k^*$.  (iii) The
Chebyshev iteration for spectrum in $[\tau,\Lambda_k+\tau]$ has error
$\le2\big((\sqrt{\kappa_k}-1)/(\sqrt{\kappa_k}+1)\big)^d$, $\kappa_k=1+\Lambda_k/\tau$
\cite[\S12.3]{Saad} for the residual $r_d(\mu)=1-\mu\,p_k(\lambda)$, and
$|p_k(\lambda)-(\lambda+\tau)^{-1}|=|r_d(\mu)|/\mu\le|r_d(\mu)|/\tau$ since $\mu=\lambda+\tau\ge\tau$.
As $\ln\frac{\sqrt{\kappa_k}+1}{\sqrt{\kappa_k}-1}=2\operatorname{artanh}(\kappa_k^{-1/2})\ge2\kappa_k^{-1/2}$,
the error is $\le\varepsilon/\tau$ once $d\ge\frac12\sqrt{\kappa_k}\ln(2/\varepsilon)$; with $|\omega_k|=2\pi\min(k,L-k)/L$,
$\sum_{k\ne0}\sqrt{\Lambda_k/\tau}\le2\sum_{k=1}^{N}\sqrt{C(N)NL/(2\pi kcN)}=\cO(N\sqrt{C(N)/c})$,
and $d_0=\cO(\sqrt{N/c}\log(1/\varepsilon))$; with \eqref{eq:Lambda}, $C(N)=2(1+\log_2N)$
by definition, and when $\mathrm{spec}(M_k)\subset[0,\Lambda_k]$ the bound on the
polynomial is a bound on the operator.  (iv) Multiply by $D$ and by the four
quadrants.
\end{proof}
By (i), $\varepsilon_{\rm in}$ does not affect the left-inverse identity, only the
oblique projection, and may be fixed independently of $\varepsilon$.
Section~\ref{sec:inner} shows that $\varepsilon_{\rm in}=0.1$ leaves the noise
amplification unchanged, so that in practice the cost of $L^n_pb$ is
$\cO(N^2\log^{3/2}N\sqrt{\kappa(E^\top F_pE)}\log(1/\varepsilon))$.

\begin{figure}[t]\centering
\includegraphics[width=\textwidth]{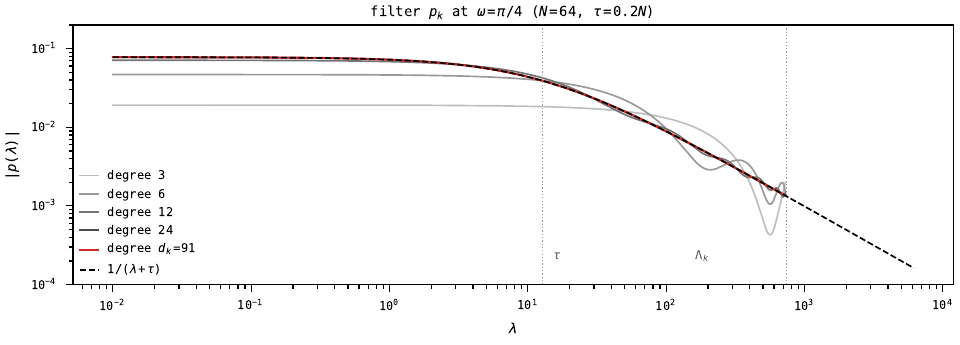}
\caption{The Chebyshev filter $p_k$ at $\omega_k=\pi/4$ ($N=64$,
$\tau=0.2N$) for increasing degree, converging to $(\lambda+\tau)^{-1}$ on
$[0,\Lambda_k]$; the final degree $d_k$ is fixed by $\varepsilon=10^{-10}$.}
\label{fig:filters}
\end{figure}
\begin{figure}[t]\centering
\includegraphics[width=\textwidth]{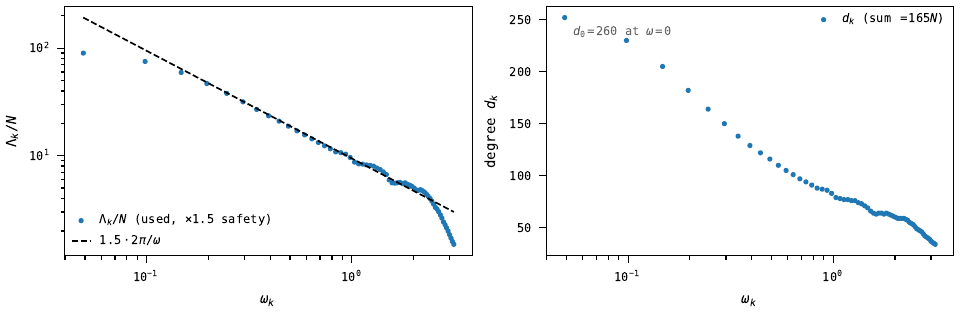}
\caption{The spectral bounds $\Lambda_k$ (power iteration with safety factor
$1.5$) follow $2\pi N/|\omega_k|$ away from $\omega=0,\pi$ (left), and the resulting
degrees $d_k\propto\sqrt{1+\Lambda_k/\tau}$ sum to $165N$ (right); $N=64$, $\tau=0.2N$.}
\label{fig:degrees}
\end{figure}
\begin{figure}[t]\centering
\includegraphics[width=\textwidth]{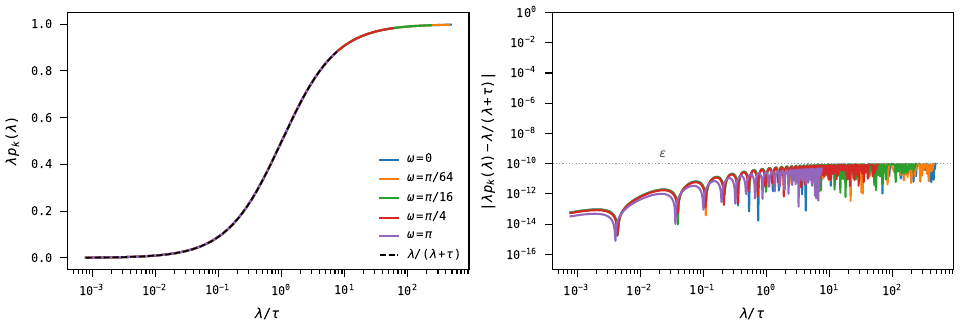}
\caption{The filter factors $\lambda p_k(\lambda)$ entering $F_p=\sum_qM_qp(M_q)$
(left) and their deviation from the Tikhonov response $\lambda/(\lambda+\tau)$
(right), for five offset frequencies: all are at the level $\varepsilon=10^{-10}$ over
the whole spectrum $[0,\Lambda_k]$, so $F_p$ differs from $F_\tau$ by $\cO(\varepsilon)$
while $L^n_p$ remains an exact left inverse (Theorem~\ref{thm:fast}(i)).}
\label{fig:response}
\end{figure}

\subsection{Explicit and certified spectral bounds}\label{sec:bounds}
\subsubsection*{The entries of $M_\omega$.}  The bounds $\Lambda_k$ can be chosen explicitly because the entries of $M_\omega$ have a closed form.
\begin{lemma}[The Gram kernel]\label{lem:kernel}
For $t,t'\in[N]$ let $\alpha_m=\lfloor t'/2^{n-m}\rfloor-\lfloor t/2^{n-m}\rfloor$,
$m=0,\dots,n$ (so $\alpha_0=0$, $\alpha_n=t'-t$), and $e_m=\alpha_{m+1}-\alpha_m$.  Then
\[
|M_\omega(t,t')|=\prod_{m=0}^{n-1}2\,\Big|\cos\frac{\omega e_m}{2}\Big| .
\]
\end{lemma}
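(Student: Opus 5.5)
The plan is to compute $M_\omega(t,t')$ directly from Lemma~\ref{lem:bitlinear} and observe that the sum over slopes factorizes over the bits of $s$. By definition,
\[
M_\omega(t,t')=\sum_{s\in[N]}\overline{A_\omega(s,t)}\,A_\omega(s,t')=\sum_{s\in[N]}e^{\mathrm i\omega(\delta_s(t')-\delta_s(t))}.
\]
Lemma~\ref{lem:bitlinear} writes $\delta_s(t)$ as linear in the bits of $t$ with coefficients depending on $s$. The key step is to rewrite it instead as linear in the bits $s_i$ of $s$, with coefficients depending on $t$. Once that is done, the sum over $s\in[N]$ becomes a sum over $(s_0,\dots,s_{n-1})\in\{0,1\}^n$ of a product of exponentials, and hence a product of $n$ two-term sums.

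For the rewriting, use $\lfloor s/2^{n-j-1}\rfloor=2\lfloor s/2^{n-j}\rfloor+s_{n-j-1}$. This gives
\[
w_j(s)=\lfloor s/2^{n-j}\rfloor+s_{n-j-1}=s_{n-j-1}+\sum_{i\ge n-j}s_i2^{\,i-n+j}.
\]
Substituting into $\delta_s(t)=\sum_jt_jw_j(s)$ and collecting the coefficient of $s_i$ gives two contributions. The first term contributes $t_{n-1-i}$ (from $j=n-1-i$). The second contributes $\sum_{j\ge n-i}t_j2^{\,j-(n-i)}=\lfloor t/2^{n-i}\rfloor$. Applying the same identity to $t$, the coefficient is
\[
c_i(t)=t_{n-1-i}+\lfloor t/2^{n-i}\rfloor=\lfloor t/2^{n-i-1}\rfloor-\lfloor t/2^{n-i}\rfloor .
\]
Thus $\delta_s(t)=\sum_{i\in[n]}s_i\,c_i(t)$, which exhibits the symmetry of the bit-linear form between slope and column. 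This bookkeeping (index reversal $j\leftrightarrow n-1-i$ and the floor identities) is the only place where care is needed, and I expect it to be the main, if modest, obstacle.

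With $\alpha_m=\lfloor t'/2^{n-m}\rfloor-\lfloor t/2^{n-m}\rfloor$ as in the statement, we get $c_i(t')-c_i(t)=\alpha_{i+1}-\alpha_i=e_i$. Therefore
\[
M_\omega(t,t')=\sum_{s_0,\dots,s_{n-1}\in\{0,1\}}\prod_{i\in[n]}e^{\mathrm i\omega s_ie_i}=\prod_{i=0}^{n-1}\big(1+e^{\mathrm i\omega e_i}\big).
\]
Finally, $|1+e^{\mathrm i\phi}|=2|\cos(\phi/2)|$, which gives the claimed modulus $\prod_{m=0}^{n-1}2|\cos(\omega e_m/2)|$. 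As a by-product, the phase of the entry is also explicit: it equals $e^{\mathrm i\omega(t'-t)/2}$ times a sign, because $\sum_m e_m=\alpha_n-\alpha_0=t'-t$.
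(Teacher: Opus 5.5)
Your proof is correct, and it reaches the formula by a different route from the paper. You use the floor identity to show that the coefficient of $s_i$ in $\delta_s(t)$ is $w_i(t)$. That is, $\delta_s(t)=\sum_i s_i w_i(t)=\delta_t(s)$, so the bit-linear form is symmetric in slope and column. The sum over $s\in[N]$ is then a sum over $\{0,1\}^n$ of a product, and it factors in one step into $\prod_m(1+e^{\mathrm i\omega e_m})$.

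The paper instead argues by induction on the number of levels. It introduces the twisted Gram matrices $M_{\omega,\alpha}=\sum_s z^{\alpha s}a_sa_s^*$ and uses $\delta_s(t)=\lceil s/2\rceil t_{n-1}+\delta'_{\lfloor s/2\rfloor}(t\bmod 2^{n-1})$. Summing over the lowest bit of $s$, it obtains the block recursion $(1+z^{\alpha+b'-b})M'_{\omega,2\alpha+b'-b}$ on the top bits $(b,b')$ of $(t,t')$. Iterating this recursion produces the sequence $\alpha_m$ and the factors $1+z^{e_m}$.

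Your version is shorter and needs no induction. It also gives the entry itself, not just its modulus: $M_\omega(t,t')=e^{\mathrm i\omega(t'-t)/2}\prod_m 2\cos(\omega e_m/2)$. Hence $M_\omega=D\widetilde M D^*$ with $D=\mathrm{diag}_t(e^{-\mathrm i\omega t/2})$ and $\widetilde M$ real symmetric. This matches the factor $e^{-\mathrm i\omega t/2}$ in the minimizing direction reported after Proposition~\ref{prop:pole}.

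The paper's route buys a statement about whole matrices, level by level, which the paper reuses. The same recursion gives the certified row-sum bound \eqref{eq:cert}, and Remark~\ref{rem:levels} cites it as the correct level-by-level composition. You could recover that recursion from your closed form by summing over the bits of $t'$ from the top with $\alpha_m$ as the state. Your argument is just not organized to produce it directly.
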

\begin{proof}
For an integer $\alpha$ let $M_{\omega,\alpha}=\sum_{s\in[N]}z^{\alpha s}a_sa_s^*$ with
$a_s(t)=z^{\delta_s(t)}$, $z=e^{\mathrm i\omega}$, so that $M_\omega=M_{\omega,0}$.  The
weights of Section~2 satisfy $w_{n-1}(s)=\lceil s/2\rceil$ and
$w_j(s)=w'_j(\lfloor s/2\rfloor)$ for $j<n-1$, with $w'$ the weights of $n-1$ levels,
i.e.\ $\delta_s(t)=\lceil s/2\rceil\,t_{n-1}+\delta'_{\lfloor s/2\rfloor}(t\bmod2^{n-1})$.
Writing $s=2s'+s_0$ and summing over $s_0\in\{0,1\}$, the block of $M_{\omega,\alpha}$
with top bits $(b,b')$ of $(t,t')$ equals $(1+z^{\alpha+b'-b})\,M'_{\omega,2\alpha+b'-b}$
on the lower bits, with $M'$ the matrices of $n-1$ levels.  Iterating from the
top bit, $\alpha$ runs through $\alpha_0=0$, $\alpha_{m+1}=2\alpha_m+(b'_m-b_m)$, which
is the stated sequence, the factor at step $m$ is $1+z^{\alpha_{m+1}-\alpha_m}$, and
$|1+z^e|=2|\cos(\omega e/2)|$.
\end{proof}
The lemma has a simple reading.  The integer $e_m$ counts the odd multiples
of $2^{n-m-1}$ between $t$ and $t'$, that is, the number of times the two
columns fall on different sides of a dyadic boundary at level $m$; each such
level contributes a factor $2|\cos(\omega e_m/2)|$, between $0$ and $2$.  Two
columns in the same half, quarter, \dots\ of the strip share their top
levels, where $e_m=0$ and the factor is the maximal value $2$.

\subsubsection*{An explicit choice of $\Lambda_k$.}
For straight lines the exponents double from level to level,
$e_m=(t'-t)/2^{n-m}$, and the product telescopes to the Dirichlet kernel
\[
|M_\omega(t,t')|=\Big|\frac{\sin(\omega d/2)}{\sin(\omega d/2N)}\Big|\le\min\Big(N,\frac{2N}{\omega|d|}\Big),
\qquad d=t'-t .
\]
The largest eigenvalue is at most the largest row sum,
$\lambda_{\max}(M_\omega)\le\max_t\sum_{t'}|M_\omega(t,t')|$, and summing the Dirichlet
bound over $|d|<N$ gives $\lambda_{\max}\lesssim\frac{2N}{\omega}(1+\log N)$.  The
factor $\log N$ is the same growth $C(N)\sim\log N$ that power iteration finds
for the digital lines (Section~\ref{sec:fourier}).  We therefore take
\begin{equation}\label{eq:Lambda}
\Lambda_0:=N^2,\qquad \Lambda_k:=\min\Big\{N^2,\ \frac{2N(1+\log N)}{|\omega_k|}\Big\}\quad(k\ne0).
\end{equation}

How good is this choice?  At $\omega=0$ it is exact, since $\lambda_{\max}(M_0)=N^2$.
At every other frequency and for all $N\le512$ it lies above the measured
$\lambda_{\max}(M_k)$, by a factor of at least $1.7$ and typically $1.9$--$2.6$.  So
\eqref{eq:Lambda} is a safe bound with room to spare, but not a tight one, and
the overestimate costs inner degrees: since $d_k$ grows like
$\sqrt{\Lambda_k}$, the degree sums are $173N$, $196N$, $217N$ for $N=32,64,128$,
against $154N$, $165N$, $175N$ when $\Lambda_k$ is estimated by power iteration.
The explicit bounds thus cost $12$--$24\%$ more inner steps and save the
power iterations of the setup; the left-inverse error and the noise
amplification are unchanged.

Why is \eqref{eq:Lambda} not a theorem?  The straight-line argument does not
carry over, because for digital lines the sequence $e_m$ need not double.  A
single entry shows the effect.  At $\omega=2\pi/3$, $t=0$, $t'=3N/4$ one finds
$e_m=1,2,3,6,12,\dots$; from the third level on every $\omega e_m/2$ is a multiple
of $\pi$, every cosine is $\pm1$, and $|M_\omega(0,3N/4)|=N/4$, where the Dirichlet
kernel of a straight line is about $2$.  Such large off-diagonal entries make
the row sums grow faster than \eqref{eq:Lambda}, by roughly one more factor of
$\log N$, as the first row of Table~\ref{tab:lambda} shows: the ratio passes
$1$ at $N=32$ and reaches $2.6$ at $N=1024$.  Yet the eigenvalues do not follow
the row sums: $\lambda_{\max}(M_\omega)$ stays below $0.65$ times \eqref{eq:Lambda}
throughout.  The large entries must therefore cancel in the eigenvalue, and
a proof that \eqref{eq:Lambda} majorizes $\lambda_{\max}(M_k)$ would have to capture
that cancellation, which the row-sum bound discards by construction.  We
leave this open.  Nothing about exactness rests on it in practice: by
Theorem~\ref{thm:fast}(i) the identity $L^n_p\nR=\Id$ needs only an invertible
$E^\top F_pE$, which the bound $\Lambda_k\ge\lambda_{\max}(M_k)$ guarantees and which
holds in all computed cases; what the bound controls is the accuracy of the
filters.

\begin{table}[t]\centering
\begin{tabular}{l*{8}{S[table-format=1.2]}}
\toprule
$N$ & {8} & {16} & {32} & {64} & {128} & {256} & {512} & {1024}\\
\midrule
largest row sum of $M_\omega$ / \eqref{eq:Lambda} & 0.83 & 0.88 & 1.01 & 1.18 & 1.41 & 1.72 & 2.10 & 2.63\\
$\Lambda^{\rm cert}_k/\lambda_{\max}(M_k)$, median over $k$ & 1.3 & 1.7 & 2.2 & 3.0 & 4.1 & 5.3 & 6.9 & 8.9\\
$\sum_kd_k$ with $\Lambda^{\rm cert}_k$ / with \eqref{eq:Lambda} & 0.90 & 0.94 & 1.01 & 1.09 & 1.20 & 1.33 & 1.49 & 1.68\\
\bottomrule
\end{tabular}
\caption{Explicit and certified spectral bounds.  First row: the largest
row sum $\max_t\sum_{t'}|M_\omega(t,t')|$, maximized over $\omega$, relative to
\eqref{eq:Lambda}.  Second row: overestimate of the certified bound
\eqref{eq:cert}.  Third row: its cost in inner degrees ($\tau=0.2N$).}
\label{tab:lambda}
\end{table}

\subsubsection*{A certified upper bound.}
If a proved bound is wanted, it can be computed at negligible cost.  The
row-sum bound $\lambda_{\max}(M_k)\le\max_t\sum_{t'}|M_k(t,t')|$ is rigorous, and the
block recursion in the proof of Lemma~\ref{lem:kernel} bounds the row sums
level by level, without forming $M_k$.  Let
$r_m(\alpha):=\max_t\sum_{t'}|M^{(m)}_{\omega,\alpha}(t,t')|$ for the $m$-level matrices
with twist $\alpha$.  Then $r_0\equiv1$ and
\begin{equation}\label{eq:cert}
r_m(\alpha)\ \le\ |1+z^\alpha|\,r_{m-1}(2\alpha)+\max_\pm|1+z^{\alpha\pm1}|\,r_{m-1}(2\alpha\pm1),
\qquad \Lambda^{\rm cert}_k:=r_n(0)\ \ge\ \lambda_{\max}(M_k).
\end{equation}
Level $m$ needs only the twists $|\alpha|\le2^{n-m}-1$, so all $L$ bounds together
cost $\cO(N^2)$ operations, $0.07$\,s at $N=1024$ on one core.

The price of certification is read off the last two rows of
Table~\ref{tab:lambda}.  Being a row-sum bound, $\Lambda^{\rm cert}_k$ inherits the
extra $\log N$ of the first row: it overestimates $\lambda_{\max}(M_k)$ by a factor
growing from $1.3$ to $8.9$ over $N=8,\dots,1024$, and its ratio to
\eqref{eq:Lambda} has median between $0.7$ and $3.7$, below $1$ only for the
smallest $N$.  Because the degrees grow only like
the square root of the bound, the effect on the work is much milder: the
degree sums are at most $1.68$ times those of \eqref{eq:Lambda} up to $N=1024$,
and even slightly smaller for $N\le16$.  Asymptotically the certified sum is
$\sum_kd_k=\cO(N\log N\log\varepsilon^{-1})$ instead of
$\cO(N\sqrt{\log N}\log\varepsilon^{-1})$.

In return, every statement about the filters in Theorem~\ref{thm:fast} holds
without any measurement.  Since $\mathrm{spec}(M_k)\subset[0,\Lambda^{\rm cert}_k]$, one has
$\|p_k(M_k)-(M_k+\tau)^{-1}\|\le\varepsilon/\tau$, $F_p\succeq0$, $E^\top F_pE$ is invertible,
and $\lambda_{\max}(E^\top F_pE)\le4(1+\varepsilon)$.  The only quantity left to estimate
is $\lambda_{\min}(E^\top F_pE)$, which fixes the outer degree $D$; it is the subject
of the rest of this section.

\subsubsection*{The lower end of the outer interval.}
The upper end is explicit: with \eqref{eq:Lambda} or \eqref{eq:cert}, each
factor satisfies $\lambda p_k(\lambda)\le1+\varepsilon$, so
$\lambda_{\max}(E^\top F_pE)\le4(1+\varepsilon)$.  The lower end must be
\emph{under}estimated, since a Chebyshev polynomial built on too small an
interval diverges outside it.  Three statements can be made: a proved but
pessimistic bound, a computed and guaranteed one, and an empirical formula.
\begin{proposition}[A lower bound]\label{prop:lower}
With $\Lambda_k\ge\lambda_{\max}(M_k)$, the Tikhonov frame operator satisfies
\[
E^\top F_\tau E\ \succeq\ R^{n\top}\,W\,R^n ,
\]
where on each quadrant $W=\mathrm{DFT}_h^{-1}\,\mathrm{diag}_k\big((\Lambda_k+\tau)^{-1}\big)\,\mathrm{DFT}_h$,
an explicit ramp-type weighting of the data ($w(\omega_k)\approx|\omega_k|/2N(1+\log N)$ for
\eqref{eq:Lambda}); hence $\lambda_{\min}(E^\top F_\tau E)\ge\lambda_{\min}(R^{n\top}WR^n)$.
\end{proposition}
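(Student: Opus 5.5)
The plan is to prove the operator inequality frequency by frequency on each quadrant and then assemble. By Remark~\ref{rem:filter}, $F_\tau=\sum_q M_q(M_q+\tau)^{-1}$, and in the notation of \eqref{eq:Mq} each summand is
\[
M_q(M_q+\tau)^{-1}=T^{n\top}_q\,\mathrm{DFT}_h^{-1}\blockdiag_k\big(M_k(M_k+\tau)^{-1}\big)\,\mathrm{DFT}_h\,T^n_q .
\]
The key scalar fact is that for $0\le\lambda\le\Lambda_k$ we have
\[
\frac{\lambda}{\lambda+\tau}\ \ge\ \frac{\lambda}{\Lambda_k+\tau}.
\]
Since $\operatorname{spec}(M_k)\subset[0,\Lambda_k]$ by the hypothesis $\Lambda_k\ge\lambda_{\max}(M_k)$, and both sides are functions of the same Hermitian matrix $M_k$, the spectral theorem gives
\[
M_k(M_k+\tau)^{-1}\ \succeq\ (\Lambda_k+\tau)^{-1}M_k=A_k^*\,(\Lambda_k+\tau)^{-1}A_k
\]
in the Loewner order on $\CC^N$. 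This is the same argument as in Remark~\ref{rem:filter}, with the global $\Lambda_{\max}$ replaced by the frequency-dependent $\Lambda_k$.

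Next I reassemble. The block-diagonal matrix $\blockdiag_k\big((\Lambda_k+\tau)^{-1}A_k^*A_k\big)$ factors as
\[
\blockdiag_k(A_k^*)\;\big(\mathrm{diag}_k(\Lambda_k+\tau)^{-1}\otimes\Id_N\big)\;\blockdiag_k(A_k).
\]
Conjugating by the unitary $\mathrm{DFT}_h/\sqrt L$ and by $T^n_q$ preserves the Loewner order. With Lemma~\ref{lem:blockdiag}(i) this gives
\[
M_q(M_q+\tau)^{-1}\ \succeq\ \cR_q^\top\,W\,\cR_q ,
\]
where $W=\mathrm{DFT}_h^{-1}\mathrm{diag}_k\big((\Lambda_k+\tau)^{-1}\big)\mathrm{DFT}_h$ acts on the cyclic strip. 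It is a real symmetric positive definite circulant in the offset, because $\Lambda_k$ depends only on $|\omega_k|$.

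I then compress to the square: $E^\top(\cdot)E$ preserves $\succeq$. The proof of Theorem~\ref{thm:main} gives $\cR_qE x=\iota R^n_n[T^n_qx]$ for square images, since no line wraps. Hence
\[
E^\top\cR_q^\top W\cR_qE=R^{n\top}_n{}^{(q)}\,\iota^\top W\iota\,R^n_n{}^{(q)} ,
\]
with $R^n_n{}^{(q)}=R^n_nT^n_q$. Summing over $q$ gives $E^\top F_\tau E\succeq R^{n\top}WR^n$, where $W$ acts quadrant by quadrant and is understood as its compression $\iota^\top W\iota$ to the data support; I would state this identification explicitly. The eigenvalue consequence then follows from the Courant--Fischer min-max principle.

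For the ramp description, with \eqref{eq:Lambda} we have $(\Lambda_k+\tau)^{-1}\approx|\omega_k|/(2N(1+\log N))$ when $\Lambda_k$ dominates $\tau$ and $N^2$. The main obstacle is bookkeeping rather than analysis: the inequality must be taken in the complex Loewner order per frequency, and one must justify that conjugating by the DFT and restricting through $\iota$ and $E$ yields a real operator inequality on $\RR^{N\times N}$. Both steps follow from unitarity and from $W$ being real symmetric.
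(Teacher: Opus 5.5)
Your proposal is correct and follows the paper's own proof. The scalar inequality $\lambda/(\lambda+\tau)\ge\lambda/(\Lambda_k+\tau)$ on $[0,\Lambda_k]$ gives $M_k(M_k+\tau)^{-1}\succeq M_k/(\Lambda_k+\tau)$ at each frequency, and summing over quadrants and compressing to the square with the no-wrap identity $\cR_qEx=\iota R^n_n[T^n_qx]$ yields the bound. Your extra bookkeeping is what the paper abbreviates as ``the weights ride along'': reading $W$ as its compression $\iota^\top W\iota$ to the data support, and noting that the congruences by $\mathrm{DFT}_h/\sqrt L$, $T^n_q$ and $E$ preserve the Loewner order.
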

\begin{proof}
$\lambda/(\lambda+\tau)\ge\lambda/(\Lambda_k+\tau)$ on $[0,\Lambda_k]$, so per frequency
$M_k(M_k+\tau)^{-1}\succeq M_k/(\Lambda_k+\tau)$; summing over quadrants and
compressing to the square, $\sum_qE^\top T_q^\top\mathrm{DFT}_h^{-1}\blockdiag_k(M_k)\mathrm{DFT}_hT_qE=R^{n\top}R^n$
by the no-wrap property, and the weights ride along.
\end{proof}
\emph{A proved bound.}  The proposition reduces the lower frame bound to the
conditioning of the ramp-weighted normal operator $R^{n\top}WR^n$, which is where
the genuine ill-conditioning of the problem sits.  It is valid but
pessimistic.  With the measured $\lambda_{\min}(R^{n\top}\Lambda R^n)\sim N^{0.6}$
($\Lambda$ the ramp) it gives $\lambda_{\min}(E^\top F_\tau E)\gtrsim N^{-0.4}/\log N$,
about $0.03$ at $N=128$, where the true value is $0.70$.  Since the outer
degree grows like $\lambda_{\min}^{-1/2}$, building $D$ on this bound would make it
about five times too large at $N=128$, and $\cO(N^{0.2})$ instead of nearly
constant.

\emph{A guaranteed computable bound.}  A few Lanczos steps on $E^\top F_pE$ give
an enclosure that is both rigorous and nearly sharp.  If $\theta$ is a Ritz
value with Ritz vector $u$ and residual $r=(E^\top F_pE-\theta)u$, then
$\lambda_{\min}\ge\theta-\|r\|$; likewise $\lambda_{\max}\le\theta'+\|r'\|$ for the top
Ritz pair.  The computation is data independent and costs
$\cO(N^2\log N)$ per step.  In the implementation it replaces any safety
factor, and the enclosures are within $10^{-3}$ of the converged eigenvalues
($N\le64$), so nothing is lost to pessimism.

\emph{An empirical formula.}  For a setup with no computation at all, one
can fit the measured values in Table~\ref{tab:lammin}.  At $\tau=0.2N$ they lie
above $4/\log N$ for every $N\le256$, but the margin is thin: between $6\%$ and
$28\%$, and only $6\%$ at $N=256$.  Moreover from $N=128$ to $256$ the measured
value drops by a factor $0.76$, while $4/\log N$ drops only by $0.88$; if this
trend continues, the two cross near $N=512$.  So $4/\log N$ should not be
relied upon.  The safer choice $\lambda_{\min}^{\rm expl}=2/\log N$, with
$\lambda_{\max}^{\rm expl}=4.2$ and \eqref{eq:Lambda}, makes the setup entirely
computation-free.  The price is in the outer degree: $D=31,35,40,43$ for
$N=8,\dots,64$ instead of $19$--$26$ with the Lanczos enclosure, about $1.6$
times more outer steps, each a full application of $F_p$.  The left-inverse
error ($\le2\cdot10^{-11}$) and the error ratios are unchanged.

The larger regularization $\tau=N$ behaves differently.  Its values decay
faster, roughly like $N^{-0.4}$, and reach $0.21$ at $N=256$, so that
$\kappa(E^\top F_\tau E)=18.7$ there and the outer degree grows like $N^{0.2}$.
This is the trade-off of $\tau$: a larger $\tau$ brings $L^n_\tau$ closer to the
Moore--Penrose inverse (noise amplification $1.07$ instead of $1.36$ times
that of $(\nR)^\dagger$, Section~\ref{sec:num}) at the cost of a worse
conditioned frame operator.  Whether $\lambda_{\min}(E^\top F_\tau E)\gtrsim1/\log N$
holds for fixed $\tau/N$ is open; a proof would make the outer degree
$\cO(\sqrt{\log N})$ with explicit constants.

\begin{table}[t]\centering
\begin{tabular}{l*{6}{S[table-format=1.2]}}
\toprule
$N$ & {8} & {16} & {32} & {64} & {128} & {256}\\
\midrule
$\lambda_{\min}(E^\top F_pE)$, $\tau=0.2N$ & 1.44 & 1.23 & 1.02 & 0.85 & 0.70 & 0.53\\
$4/\log N$ & 1.33 & 1.00 & 0.80 & 0.67 & 0.57 & 0.50\\
$\lambda_{\min}(E^\top F_\tau E)$, $\tau=N$ & 0.90 & 0.71 & 0.54 & 0.40 & 0.30 & 0.21\\
\bottomrule
\end{tabular}
\caption{The lower end of the outer interval: measured smallest eigenvalues
of the frame operator, against the candidate formula $4/\log N$.}
\label{tab:lammin}
\end{table}

\begin{algorithm}[h]
\caption{Explicit oblique inverse $x=L^n_pb$}
\begin{algorithmic}[1]
\Statex \textbf{Setup} (once per $N$): take $\Lambda_k$ from \eqref{eq:Lambda} (or
estimate it by power iteration on $M_k$ with a safety factor); set $\tau=cN$ and
$d_k$; estimate $[\lambda_{\min},\lambda_{\max}]$ of $E^\top F_pE$ by a few Lanczos
steps; set $D$.
\Statex \textbf{Apply:}
\State For $q=\RN{1},\dots,\RN{4}$: embed $b_q$ cyclically ($\iota$), DFT in the
offset, apply $A_k^*$ and then $p_k(M_k)$ ($d_k$ Chebyshev steps; rows are
processed only while $d_k>$ step), inverse DFT, apply $T^{n\top}_q$; sum over $q$
and restrict to the square: $y=E^\top\sum_qp(M_q)\cR_q^\top\iota b$.
\State $x\gets c_D(E^\top F_pE)\,y$ by $D$ Chebyshev steps, each applying $F_p$ to
an image: pad, and for each $q$ DFT, $p_k(M_k)$, $A_k^*A_k$, inverse DFT,
$T^{n\top}_q$; sum; restrict.
\end{algorithmic}
\end{algorithm}

\section{A hybrid inverse: explicit cross-quadrant level, filtered upper levels}\label{sec:hyb}
The polynomial filters of Section~\ref{sec:poly} treat the $n$ levels of each
quadrant alike.  The cross-quadrant level $\Sone$, however, has an explicit
decomposition \cite[Thm.~4.1, 5.1]{LORR} and, as Section~\ref{sec:g1} shows, an
explicit Gram inverse; its anti-phase singular values $2\sin\frac\omega4$ are
small for a single quadrant but harmless once the four quadrants are fused.
This suggests inverting level 1 exactly and filtering only the
single-quadrant levels $m=2,\dots,n$.  Write
\[
Q:=S^n_n\cdots S^n_2,\qquad \nR=(\Id_4\otimes Q)\,\Sone ,
\]
and, in the offset-Fourier domain of Section~\ref{sec:fourier},
\[
A_k=A'_k\,\mathcal B^n_1,\qquad A'_k:=\mathcal B^n_n\cdots\mathcal B^n_2 ,
\qquad M'_k:=A_k'^*A'_k=\text{Gram of the upper levels},
\]
with $p(M')$ and $M'p(M')$ acting per quadrant and offset frequency through
$p_k(M'_k)$ and $M'_kp_k(M'_k)$ as in \eqref{eq:Mq}.  SPIFE is
${\Sone}^\dagger\prod_{m\ge2}{\npmz{S}{m}}^\dagger$, a left inverse of $\nR$ which is
unstable because the single-quadrant levels have singular values
$2\sin\frac{\omega}4\approx\omega/2$ in their anti-phase directions
(Proposition~\ref{prop:pole}); Sections~\ref{sec:oblique} and \ref{sec:poly}
replace all level inverses by the per-quadrant filter $p(M_q)$ and fuse the
quadrants through $E^\top F_pE=\sum_qE^\top T_q^\top M_qp(M_q)T_qE$.

\subsection{The hybrid inverse}
The proposal is to keep SPIFE's explicit inverse of the cross-quadrant level
and to apply the polynomial filter to the upper levels only.  The stable way
to arrange this is to backproject the data through the upper levels with the
filter, to level-1 data, and to fuse the quadrants there.  We write
\begin{equation}\label{eq:hybnot}
\Phi':=\Id_4\otimes M'p(M'),\qquad G_1:={\Sone}^\top\Sone,\qquad G_h:={\Sone}^\top\Phi'\,\Sone ,
\end{equation}
for the filtered upper-level Gram matrices, the level-1 Gram matrix and their
combination; here $p(M')$ and $M'p(M')$ act per quadrant and offset frequency
through $p_k(M'_k)$ and $M'_kp_k(M'_k)$ ($d_k$ Chebyshev steps with the butterflies
of the upper levels), and ${\Sone}^\dagger=G_1^{-1}{\Sone}^\top$ is SPIFE's
cross-quadrant inverse.  The hybrid inverse is
\begin{equation}\label{eq:hyb}
L_{\rm hyb}\,b:=\widetilde G^{-1}{\Sone}^\dagger\,Y,\qquad
Y:=\big(\Id_4\otimes p(M')Q^\top\big)\iota b,\qquad
\widetilde G:={\Sone}^\dagger\,\Phi'\,\Sone .
\end{equation}

\begin{proposition}[The hybrid is an exact left inverse]\label{prop:hyb}
For every family of filter polynomials $p_k$:
\begin{enumerate}
\item[(i)] $L_{\rm hyb}\,b=G_h^{-1}{\Sone}^\top Y$, and $L_{\rm hyb}\nR=\Id$;
\item[(ii)] $\widetilde G=G_1^{-1}G_h$ has real spectrum in
$[0,\max_\lambda\lambda p(\lambda)]\subset[0,1+\varepsilon]$.
\end{enumerate}
\end{proposition}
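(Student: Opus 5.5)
The plan is to reduce everything to two algebraic facts: ${\Sone}^\dagger=G_1^{-1}{\Sone}^\top$ with $G_1$ symmetric positive definite, and the filter identity $p_k(M'_k)A_k'^*A'_k=M'_kp_k(M'_k)$ of Remark~\ref{rem:filter}, now applied to the upper levels only.

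For (i), I would first substitute ${\Sone}^\dagger=G_1^{-1}{\Sone}^\top$ into the definition of $\widetilde G$. This gives $\widetilde G=G_1^{-1}{\Sone}^\top\Phi'\Sone=G_1^{-1}G_h$. Provided $G_h$ is invertible, $\widetilde G^{-1}=G_h^{-1}G_1$, and hence $L_{\rm hyb}b=G_h^{-1}G_1G_1^{-1}{\Sone}^\top Y=G_h^{-1}{\Sone}^\top Y$.

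For the left-inverse identity, take $b=\nR[x]=(\Id_4\otimes Q)\Sone x$. As in the proof of Theorem~\ref{thm:main}, the no-wrap property gives $\iota(\Id_4\otimes Q)=(\Id_4\otimes\mathcal Q)\iota_1$, where $\mathcal Q$ is the cyclic version of the upper levels and $\iota_1$ embeds level-1 data. Per offset frequency $\mathcal Q$ acts by $A'_k$, and the cyclic adjoint $Q^\top$ acts by $A_k'^*$. Therefore $p(M')Q^\top\mathcal Q$ acts per frequency as $p_k(M'_k)M'_k=M'_kp_k(M'_k)$, which gives $Y=\Phi'\Sone x$. The step I expect to need the most care is the bookkeeping of embeddings: the level-1 data of a square image must sit in strips where the cyclic upper levels do not wrap, so that $\Phi'$ is unambiguously defined on $\rg(\Sone)$. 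I would handle this by the same argument as in Section~3.1, since level-1 sections have offset support of width less than $N$. Granting this, $L_{\rm hyb}\nR x=G_h^{-1}{\Sone}^\top\Phi'\Sone x=x$.

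Invertibility of $G_h$ follows as in the proof of Theorem~\ref{thm:fast}(i). With spectral bounds $\Lambda_k\ge\lambda_{\max}(M'_k)$ one has $p_k>0$ on the spectra. Writing $m$ for the minimum of the $p_k$ there, this gives $\Phi'\succeq m\,(\Id_4\otimes M')$. Compressing, $G_h\succeq m\,{\Sone}^\top(\Id_4\otimes Q^\top Q)\Sone=m\,R^{n\top}R^n\succ0$. For a completely arbitrary family $p_k$ I would state invertibility of $G_h$ as the standing hypothesis, exactly as in Theorem~\ref{thm:fast}(i).

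For (ii), $G_1^{-1}G_h$ is similar to the Hermitian matrix $G_1^{-1/2}G_hG_1^{-1/2}$, so its spectrum is real. Its eigenvalues are the generalized Rayleigh quotients $\langle v,G_hv\rangle/\langle v,G_1v\rangle=\langle w,\Phi'w\rangle/\|w\|^2$ with $w=\Sone v$. The operator $\Phi'$ is Hermitian, with spectrum $\{\lambda p_k(\lambda):\lambda\in\mathrm{spec}(M'_k)\}$, so the quotients lie in $[\min\lambda p(\lambda),\max\lambda p(\lambda)]$. The lower end $0$ uses $p_k\ge0$ on the spectrum, guaranteed as above when $\Lambda_k\ge\lambda_{\max}(M'_k)$; the upper bound $1+\varepsilon$ is $\lambda p_k(\lambda)=1-r_d(\mu)+\tau p_k(\lambda)\cdot0\ldots$, more simply $\lambda p_k(\lambda)\le\lambda/(\lambda+\tau)+\lambda\varepsilon/\tau$. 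I would rather use the residual form $\lambda p_k=\tfrac{\lambda}{\lambda+\tau}(1-r_d)\le1+\varepsilon$ from the proof of Theorem~\ref{thm:fast}.
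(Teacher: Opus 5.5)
Your proposal is correct and follows the paper's proof. Part (i) is the same substitution ${\Sone}^\dagger=G_1^{-1}{\Sone}^\top$ combined with the no-wrap identity ${\Sone}^\top Y=G_hx$, and in (ii) your similarity $G_1^{-1/2}G_hG_1^{-1/2}=W^\top\Phi'W$, with $W=\Sone G_1^{-1/2}$ an orthonormal basis of $\rg\Sone$, is exactly the paper's compression $\widehat U^\top\Phi'\widehat U$ obtained from the thin SVD. Your explicit treatment of the invertibility of $G_h$ and of $p_k\ge0$ fills in points the paper leaves implicit. Drop the abandoned bound $\lambda/(\lambda+\tau)+\lambda\varepsilon/\tau$, which is not $\le1+\varepsilon$ because $\lambda/\tau$ can be large. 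Keep the residual form $\lambda p_k(\lambda)=\frac{\lambda}{\lambda+\tau}(1-r_d(\mu))$, which gives both ends of $[0,1+\varepsilon]$ once $|r_d|\le\varepsilon<1$.
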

\begin{proof}
(i) Since ${\Sone}^\dagger=G_1^{-1}{\Sone}^\top$, we have $\widetilde G=G_1^{-1}G_h$ and
$\widetilde G^{-1}{\Sone}^\dagger=G_h^{-1}{\Sone}^\top$.  For $b=\nR x=(\Id\otimes Q)\Sone x$,
\[
{\Sone}^\top Y={\Sone}^\top\big(\Id\otimes p(M')Q^\top Q\big)\Sone x=G_hx ,
\]
because the cyclic transforms agree with the true ones on padded square
images, as in the proof of Theorem~\ref{thm:main}; hence $L_{\rm hyb}\nR x=x$.
(ii) With the thin singular value decomposition $\Sone=\widehat U\widehat\Sigma\widehat V^\top$,
\[
G_1^{-1}G_h=\widehat V\widehat\Sigma^{-1}\big[\widehat U^\top\Phi'\,\widehat U\big]\widehat\Sigma\widehat V^\top ,
\]
which is similar to the compression $\widehat U^\top\Phi'\,\widehat U$ of $\Phi'$ to
$\rg\Sone$; its eigenvalues lie in the range of $\lambda p(\lambda)$ on the spectra
of the $M'_k$.
\end{proof}

Thus the hybrid is exactly the construction of Section~\ref{sec:poly} with two changes:
the filter is applied to the Gram matrices $M'_k$ of levels $2,\dots,n$
instead of $M_k$, the fusion over quadrants being carried by ${\Sone}^\top$
instead of the permutations $T_q$; and the explicit $G_1^{-1}$ is
used as a preconditioner for the outer solve, so that the outer Chebyshev
polynomial is built on the spectrum of $G_1^{-1}G_h$ rather than that of
$G_h$.  Section~\ref{sec:g1} makes $G_1$ and $G_1^{-1}$ explicit: $G_1$ is a
nearest-neighbour stencil with spectrum in $[4,16]$, so $G_1^{-1}$ is applied
to any fixed accuracy by a short Chebyshev polynomial at $\cO(N^2)$ cost, and
the level-1 pseudo-inverse of \cite[Thm.~5.1]{LORR} is the inverse of a
truncated Gram $4\Id+8\Pi$, also $\cO(N^2)$.  Each outer step therefore costs
one application of $G_h$ (the upper-level butterflies, $\sum_kd_k$ steps) plus
$\cO(N^2)$.  The reconstruction does not depend on the preconditioner; only the
degree of the outer polynomial does.
\subsection{The level-1 Gram and its explicit relatives}\label{sec:g1}
Write the image as $x\in\RR^{N\times N}$, let $K=\mathrm{tridiag}(1,2,1)\in\RR^{N\times N}$
and let $J=\Id_{N/2}\otimes\begin{psmallmatrix}0&1\\1&0\end{psmallmatrix}$ swap
the two members of each pair $(2a,2a+1)$.

\begin{lemma}[Level-1 Gram]\label{lem:g1}
$G_1={\Sone}^\top\Sone=8\Id+K\otimes J+J\otimes K$, i.e.\ $G_1x=8x+KxJ+JxK$, and
$\operatorname{spec}(G_1)\subset[4,16]$ for every $N$.
\end{lemma}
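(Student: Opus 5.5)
The plan is to compute $G_1$ quadrant by quadrant from the level-1 formula \eqref{eq:Snm} with $m=1$, and then to bound its spectrum through an algebraic rewriting of the stencil.

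\emph{The Gram of one quadrant.} At $m=1$ the sections are single columns. For quadrant \RN{3} ($T^n_{\RN{3}}=\Id$), the pair of columns $a=x(\cdot,2\ell)$, $b=x(\cdot,2\ell+1)$ is mapped to $g(h,0)=a(h)+b(h)$ and $g(h,1)=a(h)+b(h+1)$. These values are kept for all offsets in $E^n_1$, which are exactly the offsets where either sum can be nonzero. Hence no term is truncated, and
\[
\|g\|^2=2\|a\|^2+2\|b\|^2+2\langle a,(\Id+Z)b\rangle,
\]
where $Z$ is the truncated up-shift on $\RR^N$, $(Zb)(h)=b(h+1)$, with $b(N):=0$. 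The Gram of quadrant \RN{3} is therefore block diagonal over column pairs, with blocks $\begin{psmallmatrix}2\Id&\Id+Z\\ \Id+Z^\top&2\Id\end{psmallmatrix}$ in the row variable.

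\emph{Summing the quadrants.} Quadrant \RN{2} is the row reflection $i\mapsto N-1-i$. The reflection turns $Z$ into $Z^\top$, so the off-diagonal blocks become $\Id+Z^\top$ and $\Id+Z$. Adding the two quadrants gives $4\Id$ on the diagonal and $2\Id+Z+Z^\top=K$ off the diagonal, that is $4\Id+K\otimes J$ in the convention $x\mapsto KxJ$, with $K$ acting on rows and $J$ on columns. Quadrants \RN{1} and \RN{4} are the same two maps composed with the transpose; I would check that their reflections again pair $Z$ with $Z^\top$. They then contribute $4\Id+J\otimes K$, and summing gives the stated identity $G_1x=8x+KxJ+JxK$.

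\emph{Spectrum.} The terms $K\otimes J$ and $J\otimes K$ do not commute, and the crude bound $\|K\otimes J\|,\|J\otimes K\|\le4$ only yields $[0,16]$. I would therefore decompose $K$. Let $J'$ swap the pairs $(2a+1,2a+2)$ and fix the endpoints $0$ and $N-1$. Then $Z+Z^\top=J+J'-D$, where $D$ is the diagonal projector onto the endpoints; on second thought $J+J'$ has diagonal entries at $0$ and $N-1$, so the exact relation has to be checked. Choosing $J'$ to be zero at the endpoints gives $K=2\Id+J+J'$ exactly, with $\|J'\|\le1$. Substituting,
\[
K\otimes J+J\otimes K=2(\Id\otimes J+J\otimes I+J\otimes J)+J'\otimes J+J\otimes J'=8\Pi-2\Id+J'\otimes J+J\otimes J',
\]
where $\Pi=\tfrac14(\Id+J)\otimes(\Id+J)$ is the $2\times2$-cell average, an orthogonal projector. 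Since $\|J'\otimes J\|,\|J\otimes J'\|\le1$, this gives $G_1=6\Id+8\Pi+(J'\otimes J+J\otimes J')$, whose spectrum lies in $[6-2,\,6+8+2]=[4,16]$.

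The step I expect to need the most care is this last rewriting, in particular the boundary bookkeeping of $J'$ and checking that the quadrant \RN{1}/\RN{4} reflections produce $K$ and not $2\Id+2Z$. The representation $G_1=6\Id+8\Pi+{}$(norm $\le2$) also explains the later Lemma~\ref{lem:lorr}.
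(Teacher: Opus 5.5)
Your proof is correct. The step you flagged works out. Write $R$ for the reversal $i\mapsto N-1-i$.

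For quadrant \RN{1}, $T^n_{\RN{1}}x=Rx^\top$. Its single reflection turns the coupling of rows $2\ell,2\ell+1$ of $x$ into $\Id+Z^\top$.

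For quadrant \RN{4}, $T^n_{\RN{4}}x=Rx^\top R$. The second reflection maps each pair onto a pair with its two members exchanged, and this transposes the coupling back to $\Id+Z$.

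So the two quadrants give $4\Id+J\otimes K$, as in the paper. You should drop the ``on second thought'' detour and define $J'$ from the start as the partial swap of the pairs $(2a+1,2a+2)$, $0\le a\le N/2-2$, vanishing in rows and columns $0$ and $N-1$. Then $K=2\Id+J+J'$ holds exactly.

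For the spectral enclosure your route differs from the paper's and is more elementary. The paper embeds $K$ in a circulant of period $N'\ge N+2$ and regards $G_1$ as a compression of the periodic operator, so that its Rayleigh quotients lie in the convex hull of the periodic spectrum. It then applies Weyl's inequality to the $4\times4$ symbol in the eigenbasis of $J$.

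Your splitting is the same decomposition before the Fourier transform. The paper's $A_\alpha$ is the symbol of the periodized $J'$, and $8\Id+2(\Id\otimes Z+Z\otimes\Id+Z\otimes Z)$ is $6\Id+8\Pi$ written in the $J$-eigenbasis. Because you work directly with the truncated $J'$, whose norm is still at most one, you need neither the periodic extension nor the compression argument.

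You also obtain the identity $G_1=6\Id+8\Pi+J'\otimes J+J\otimes J'$, from which Lemma~\ref{lem:lorr} follows in one line, as you suspected. The deleted odd--odd level-1 entries contribute exactly $2\Id+J'\otimes J+J\otimes J'$ to the Gram, leaving $4\Id+8\Pi$; the paper instead verifies that lemma by direct computation.

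What the symbol route gives in return is where the extremes are attained. At pair frequencies $\alpha=\beta=0$ the symbol is diagonal with entries $16,4,4,8$, so the periodic operator reaches both ends of $[4,16]$. This is why the measured extreme eigenvalues of $G_1$ approach $4$ and $16$.
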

\begin{proof}
For quadrant \RN{3} ($T=\Id$) the level-1 operator on a column pair is
$g(h,2\ell)=x(h,2\ell)+x(h,2\ell+1)$, $g(h,2\ell+1)=x(h,2\ell)+x(h+1,2\ell+1)$, so
${S^n_1}^\top S^n_1$ couples $x(h,2\ell)$ with $x(h,2\ell+1)$ and $x(h\pm1,2\ell+1)$ with
unit weights and has diagonal $2$; quadrant \RN{2} is the same with the row
order reversed, and the sum of the two is $4\Id+K\otimes J$.  Quadrants \RN{1}
and \RN{4} give $4\Id+J\otimes K$ with the roles of rows and columns exchanged.

For the enclosure, $K$ with zero boundary conditions is the compression of the
circulant $K_c$ on a period $N'\ge N+2$ (even), and $J$ acts inside pairs, so
$G_1$ is the compression of the periodic operator $G_c=8\Id+K_c\otimes J+J\otimes K_c$
and its Rayleigh quotients lie in the convex hull of $\operatorname{spec}(G_c)$.
On the pairs $(2a,2a+1)$ with pair frequency $\alpha$, $K_c$ has the
$2\times2$ symbol
\[
\hat K(\alpha)=\begin{pmatrix}2&1+e^{-\mathrm i\alpha}\\1+e^{\mathrm i\alpha}&2\end{pmatrix},
\]
and $G_c$ the $4\times4$ symbol $8\Id+\hat K(\alpha)\otimes J+J\otimes\hat K(\beta)$.
In the basis $\frac1{\sqrt2}(1,\pm1)$ of eigenvectors of $J$, $J$ becomes
$Z=\mathrm{diag}(1,-1)$ and
\[
\hat K(\alpha)=2\Id+Z+A_\alpha,\qquad
A_\alpha=\begin{pmatrix}\cos\alpha&\mathrm i\sin\alpha\\-\mathrm i\sin\alpha&-\cos\alpha\end{pmatrix},
\qquad A_\alpha^2=\Id .
\]
Hence the symbol equals
\[
8\Id+2\,(\Id\otimes Z+Z\otimes\Id+Z\otimes Z)\;+\;(A_\alpha\otimes Z+Z\otimes A_\beta).
\]
The first part is diagonal with entries $8+2(s+t+st)$, $s,t=\pm1$, that is
$14,6,6,6$; the second has norm at most $2$, since $\|A_\alpha\|=\|Z\|=1$.  By
Weyl's inequality the eigenvalues of the symbol lie in $[6-2,14+2]=[4,16]$, and
so does $\operatorname{spec}(G_1)$.
\end{proof}
The measured extreme eigenvalues of $G_1$ are $4.22,4.07,4.02,4.005$ and
$15.66,15.90,15.97,15.99$ for $N=8,\dots,64$, so the enclosure is sharp.

Thus $G_1^{-1}$ is applied by a Chebyshev polynomial of the stencil on
$[4,16]$ with error $2\cdot3^{-d}$ after $d$ steps, each step $\cO(N^2)$ without
transforms; $d=14$ gives $3\cdot10^{-7}$, far below what the outer iteration
needs from a preconditioner.

Theorem 4.1 of \cite{LORR} writes $\Sone=U\Sigma V^\top$ with an explicit orthonormal
$V$ (sines on the $N/2\times N/2$ grid of $2\times2$ cells times four cell patterns),
but $U$ is not orthonormal (eq.\ (5.2) there), so $G_1=V\Sigma U^\top U\Sigma V^\top$ is
not diagonal in the basis $V$: in that basis the coupling between different
cell frequencies has entries up to $0.6$.  Their
Theorem 5.1 circumvents this by a truncation $M$ of the level-1 data after
which the remaining columns of $U$ become orthonormal (after pairing the
types $p=0,2$), giving an explicit left inverse $B^\dagger M$, $B:=M\Sone$.  The
truncation is the following.

\begin{lemma}[The truncated level-1 Gram of \cite{LORR}]\label{lem:lorr}
Let $M$ delete, in every quadrant, the level-1 entries $(h,s)$ with $h$ and $s$
both odd (one of the four parity classes).  Then
${\Sone}^\top M\Sone=4\Id+8\Pi$, where $\Pi$ is the orthogonal projection onto
images constant on the $2\times2$ cells.  Hence $B=M\Sone$ has singular values
$\sqrt{12}$ (on the cell-constant images) and $2$ (on their complement), and
$B^\dagger=\big(\tfrac14(\Id-\Pi)+\tfrac1{12}\Pi\big)\,{\Sone}^\top M$.
\end{lemma}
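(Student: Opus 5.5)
The plan is a direct computation in the spirit of the proof of Lemma~\ref{lem:g1}. Since $M$ is diagonal with $0/1$ entries, ${\Sone}^\top M\Sone=\sum_q\sum_{(h,s)\ \mathrm{kept}}a_{q,h,s}a_{q,h,s}^\top$. Here $a_{q,h,s}$ is the image-space vector whose inner product with $x$ is the level-1 datum $(S^n_1T^n_qx)(h,s)$ in strip pair $\ell$. Every level-1 datum is a sum of exactly two pixels, so $a=e_u+e_v$ for a pixel pair $\{u,v\}$. The Gram matrix is therefore $\sum_{\{u,v\}}m_{uv}(e_u+e_v)(e_u+e_v)^\top$, where $m_{uv}$ counts how often the pair $\{u,v\}$ survives the truncation, summed over the four quadrants. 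The whole proof reduces to identifying these surviving pairs and their multiplicities.

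First, quadrant \RN{3}, using the level-1 formulas from the proof of Lemma~\ref{lem:g1}. The slope-$0$ data $x(h,2\ell)+x(h,2\ell+1)$ are all kept. They give every horizontal pair inside a row of a column pair. The slope-$1$ data $x(h,2\ell)+x(h+1,2\ell+1)$ are kept only for even $h$, because $s=1$ is odd. Writing $h=2a$, the kept pair is $\{(2a,2\ell),(2a+1,2\ell+1)\}$, the main diagonal of the $2\times2$ cell with corner $(2a,2\ell)$. The boundary datum $h=-1$ is odd and deleted, so no surviving pair leaves the square or straddles two cells. Hence quadrant \RN{3} contributes, cell by cell, its two horizontal pairs once each and its main diagonal once. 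Next I transport this through the permutations \eqref{eq:permute}. Quadrant \RN{2} reflects rows by $i\mapsto N-1-i$. Since $N$ is even, this maps cells to cells and should turn the main diagonal into the anti-diagonal. Quadrants \RN{1} and \RN{4} involve the transpose, so they contribute vertical pairs, together with one diagonal each.

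The step I expect to be the main obstacle is the parity bookkeeping for the diagonals. The target identity needs the main diagonal and the anti-diagonal to each appear exactly twice in total over the four quadrants. This has to be checked carefully from \eqref{eq:permute}, because of the reflections $2^n-1-i$ and of the convention that $T^n_q$ is applied before the offset parity is read. I will verify it by writing, for each $q$, the kept slope-$1$ pair in original image coordinates. Once this is done, every one of the six pixel pairs of each cell occurs with multiplicity $2$ (horizontal: \RN{2},\RN{3}; vertical: \RN{1},\RN{4}; each diagonal: two quadrants), and different cells decouple. On one cell, with $\mathbf 1$ the all-ones $4$-vector,
\[
\sum_{u<v}2(e_u+e_v)(e_u+e_v)^\top=2\big(3\Id+(\mathbf 1\mathbf 1^\top-\Id)\big)=4\Id+2\,\mathbf 1\mathbf 1^\top=4\Id+8\Pi ,
\]
since $\Pi=\frac14\mathbf 1\mathbf 1^\top$ on a cell. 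This gives ${\Sone}^\top M\Sone=4\Id+8\Pi$.

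The remaining claims are immediate. $B^\top B={\Sone}^\top M^\top M\Sone={\Sone}^\top M\Sone$ because $M$ is an orthogonal coordinate projection. So $B^\top B$ has eigenvalue $12$ on $\rg\Pi$ and $4$ on its complement, which makes the singular values of $B$ equal to $\sqrt{12}$ and $2$. Finally, $B^\dagger=(B^\top B)^{-1}B^\top=\big(\tfrac14(\Id-\Pi)+\tfrac1{12}\Pi\big){\Sone}^\top M$.
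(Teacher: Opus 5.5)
Your proposal is correct and follows the paper's own route, a direct computation of the four truncated quadrant stencils. You carry it out for general $N$ by counting the surviving within-cell pixel pairs, where the paper instead checks numerically at $N=8$ or reads the result off the vectors $\bar w^{(p)}$ of \cite{LORR}. The diagonal bookkeeping you deferred does hold: the kept slope-1 data give cell main diagonals in quadrants \RN{3} and \RN{4} and anti-diagonals in quadrants \RN{2} and \RN{1}, and the single-pixel boundary data ($h=-1$ and $h=N-1$, both odd) are all deleted, so each of the six pairs of a cell occurs exactly twice.
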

\begin{proof}
Direct computation of the four quadrant stencils with the odd--odd rows removed
(verified to machine precision for $N=8$; the other three parity classes do
not give a block-diagonal Gram).  Equivalently: in eqs.\ (5.4)--(5.6) of \cite{LORR}
the truncated vectors $\bar w^{(p)}$ contain only copies of $\phi_{(k_1,k_2)}$,
so their norms do not depend on $(k_1,k_2)$: $\|\bar w^{(0)}\pm\bar w^{(2)}\|^2/2=12,4$
and $\|\bar w^{(1)}\|^2=\|\bar w^{(3)}\|^2=4$.
\end{proof}

So the two orthogonal transformations of \cite[Thm.~5.1]{LORR} reduce to the
cell average $\Pi$; no sine transform is needed.  As a preconditioner for
$G_h$, $(4\Id+8\Pi)^{-1}$ is weaker than $G_1^{-1}$: at $\tau=0.2N$,
$\kappa\big((4\Id+8\Pi)^{-1}G_h\big)=4.25,5.45,7.09$ against $\kappa(G_1^{-1}G_h)=3.17,4.16,5.44$
and $\kappa(G_h)=4.48,5.64,7.69$ for $N=8,16,32$;
the truncation discards the rows that couple neighbouring cells, which is
where $G_h$ differs most from a multiple of the identity.  We therefore use
the exact $G_1^{-1}$ (Lemma~\ref{lem:g1}) in the fast implementation of
Section~\ref{sec:fast}.
\subsection{Conditioning and accuracy}\label{sec:hybnum}
All operators were formed densely ($N\le64$), with the Tikhonov filter
$p(\lambda)=(\lambda+\tau)^{-1}$ to which the Chebyshev filters agree to $\varepsilon/\tau$.
The left-inverse identity holds to $10^{-15}$--$10^{-13}$ for both constructions,
$L^n_\tau$ of Remark~\ref{rem:filter} and the hybrid.

\begin{table}[h]\centering\small
\begin{tabular}{lS[table-format=2.0]S[table-format=1.2]S[table-format=2.2]S[table-format=1.2]S[table-format=2.2]S[table-format=1.2]S[table-format=1.2]}
\toprule
$\tau$ & {$N$} & {$\kappa(E^\top FE)$} & {$\kappa(G_h)$} & {$\kappa(G_1)$} & {$\kappa(G_1^{-1}G_h)$} & {err$/$MP, original} & {err$/$MP, hybrid}\\
\midrule
$0.2N$ & 8  & 2.63 & 4.48 & 3.71 & 3.17 & 1.30 & 1.14\\
       & 16 & 3.15 & 5.64 & 3.91 & 4.16 & 1.28 & 1.15\\
       & 32 & 3.84 & 7.69 & 3.97 & 5.44 & 1.31 & 1.16\\
       & 64 & 4.64 & 10.22 & 3.99 & 7.21 & 1.35 & 1.19\\
\midrule
$N$    & 8  & 3.65 & 7.22 & 3.71 & 4.30 & 1.06 & 1.02\\
       & 16 & 4.99 & 11.07 & 3.91 & 6.62 & 1.06 & 1.03\\
       & 32 & 6.88 & 17.00 & 3.97 & 9.40 & 1.07 & 1.02\\
       & 64 & 9.51 & 25.06 & 3.99 & 13.32 & 1.08 & 1.03\\
\bottomrule
\end{tabular}
\caption{Conditioning of the outer operators and reconstruction error on
$1\%$ noise relative to the Moore--Penrose inverse (white-noise image).
$E^\top FE$: Section~\ref{sec:oblique}; $G_h$: hybrid without preconditioner; $G_1^{-1}G_h$:
hybrid with SPIFE's level-1 inverse as preconditioner.}
\label{tab:hyb}
\end{table}

\begin{table}[h]\centering\small
\begin{tabular}{l*{6}{S[table-format=3.2]}}
\toprule
$N$ & {8} & {16} & {32} & {64} & {128} & {256}\\
\midrule
median $\lambda_{\max}(M'_k)/\lambda_{\max}(M_k)$ & 0.40 & 0.44 & 0.46 & 0.46 & 0.46 & 0.46\\
$\sum_kd_k/N$, full Gram $M_k$ ($\tau=0.2N$) & 102 & 116 & 126 & 136 & 144 & 151\\
$\sum_kd_k/N$, upper levels $M'_k$ ($\tau=0.2N$) & 62 & 73 & 83 & 90 & 97 & 103\\
$\sum_kd_k/N$, full / upper ($\tau=N$) & {48/31} & {54/35} & {58/39} & {63/43} & {66/46} & {70/48}\\
\bottomrule
\end{tabular}
\caption{Inner degrees ($\varepsilon=10^{-10}$, exact $\lambda_{\max}$ as bounds).
$\lambda_{\max}(M'_k)/\lambda_{\max}(M_k)$ is $0.25$ at $\omega\to0$ and $0.5$ at large
$|\omega|$, the two singular values of the removed level being $2\cos\frac\omega4$
and $2\sin\frac\omega4$.}
\label{tab:deg}
\end{table}

\subsubsection*{Accuracy.}  Moving the filter to the upper levels brings the
inverse closer to Moore--Penrose at equal $\tau$: $1.14$--$1.19\times$ against
$1.30$--$1.35\times$ at $\tau=0.2N$, and $1.02$--$1.03\times$ against $1.06$--$1.08\times$
at $\tau=N$ (Table~\ref{tab:hyb}).  The level-1 anti-phase directions, which
the full-Gram filter damps by $\lambda/(\lambda+\tau)$ together with everything
else, are now inverted exactly by ${\Sone}^\dagger$; their singular value
$2\sin\frac\omega4$ is harmless once the four quadrants are fused, since the
level-1 Gram $G_1$ has $\kappa\le4$ for all $N$ (Table~\ref{tab:hyb}).

\subsubsection*{Cost.}  The upper-level Grams have $\lambda_{\max}(M'_k)\approx0.46\lambda_{\max}(M_k)$,
so the inner degree sums drop to $0.65$--$0.70$ of those of $L^n_p$
(Table~\ref{tab:deg}).  The outer conditioning is worse, $\kappa(G_1^{-1}G_h)/\kappa(E^\top FE)=1.2,1.3,1.4,1.55$
for $N=8,\dots,64$ at $\tau=0.2N$ ($1.2$--$1.4$ at $\tau=N$), so $D$ grows by
$\sqrt{1.2}$--$\sqrt{1.55}$.  The product $(\sum_kd_k)D$ is $0.83\times$ that of $L^n_p$ at $N=64$ for both $\tau$, plus one $\cO(N^2)$ application of
$G_1^{-1}$ per outer step.  Without the preconditioner the hybrid would be
worse: $\kappa(G_h)=4.5$--$10.2$ ($\tau=0.2N$) and $7$--$25$ ($\tau=N$), so SPIFE's
explicit ${\Sone}^\dagger$ is what makes the arrangement competitive; it halves
the outer conditioning.
\subsection{Fast implementation and certified bounds}\label{sec:fast}
The implementation realizes \eqref{eq:hyb} with the butterflies of
Section~\ref{sec:fourier} restricted to levels $2,\dots,n$, the level-1 butterfly
$\mathcal B^n_1$ at both ends, $G_1^{-1}$ by the Chebyshev polynomial of
Lemma~\ref{lem:g1}, and a fixed-degree outer Chebyshev iteration on
$G_1^{-1}G_h$ whose interval comes from a few Lanczos steps on the pencil
$(G_h,G_1)$ with the residual enclosure, as in Section~\ref{sec:bounds}.  The bounds
$\Lambda'_k\ge\lambda_{\max}(M'_k)$ are either $1.5\times$ a power iteration or
certified: the row-sum recursion \eqref{eq:cert} started at level $2$
with the identity as base (no
violation against the true row sums for $N\le512$, and $\Lambda'_k\ge\lambda_{\max}(M'_k)$
at every sampled frequency up to $N=1024$).  The certified degree sum of the
hybrid is $0.58$--$0.59$ times that of $L^n_p$ at every $N$ from $8$ to
$1024$, and exceeds the power-iteration one by the factors $1.0,1.15,1.3,1.5,1.7,1.9$ for
$N=32,\dots,1024$, the same growth as for the full Gram.

Costs are counted as butterfly flops: the number of (level, offset-frequency
row) pairs processed, times $N$; FFTs and the $\cO(N^2)$ stencils, which are
negligible, are left out, and $L^n_p$ is counted the same way.
Divided by $N^2\log_2N$ the count grows like the total degree
$(\sum_kd_k)D$, i.e.\ like $N^{1/2}/\sigma_{\min}(\nR)$ up to logarithms.  Table~\ref{tab:fasthyb} in Section~\ref{sec:numhyb} compares the two inverses.
\section{Numerical results}\label{sec:num}
\subsection{The polynomial-filter inverse}
All wall times reported below are for the compiled implementation of the
row kernels described in Section~\ref{sec:compiled} (C\texttt{++}, one core of an
otherwise idle node of the compute2 cluster);
the operators themselves were verified in a NumPy prototype, with a dense
version and the ADRT itself checked against the \texttt{adrt} package
\cite{Otness23}, whose timings depend too strongly on the implementation to be
meaningful, and the C\texttt{++} kernels reproduce it to $10^{-15}$.
Appendix~\ref{app:code} lists the script behind each result.  Table~\ref{tab:fast} lists, for $\tau=0.2N$ and
$\varepsilon=10^{-10}$, the inner degree sums, the outer degree and conditioning,
the number of frequency-steps per application (each an $\cO(N\log N)$ butterfly
pair), the left-inverse error $\|L^n_p\nR[x]-x\|/\|x\|$ on a random image, and
the reconstruction error on data with $1\%$ white noise relative to
$(\nR)^\dagger$ (computed by CGLS \cite{Saad} to $10^{-10}$).

\begin{table}[h]\centering\small
\begin{tabular}{l*{6}{S[table-format=3.2]}}
\toprule
$N$ & {8} & {16} & {32} & {64} & {128} & {256}\\
\midrule
$\sum_kd_k/N$ & 124 & 141 & 154 & 165 & 175 & 184\\
outer degree $D$ & 21 & 24 & 26 & 29 & 32 & 37\\
$\kappa(E^\top F_pE)$ & 3.2 & 3.8 & 4.7 & 5.7 & 7.0 & 9.2\\
frequency-steps per application $/(10^4N)$ & 1.04 & 1.35 & 1.60 & 1.92 & 2.24 & 2.72\\
left-inverse error $/10^{-12}$ & 5 & 3 & 8 & 8 & 10 & 10\\
error ratio to $(\nR)^\dagger$ & 1.29 & 1.31 & 1.35 & 1.36 & 1.36 & 1.36\\
\bottomrule
\end{tabular}
\caption{The polynomial-filter inverse, $\tau=0.2N$, $\varepsilon=10^{-10}$.  The
degree sum grows like $N\sqrt{\log N}$ and the outer degree like
$\sqrt{\kappa(E^\top F_pE)}$ (in this range of $N$ compatible with $\sqrt{\log N}$ and
with $N^{0.2}$, see Section~\ref{sec:cond}); the
noise amplification saturates at $1.36\times$ Moore--Penrose.  With $\tau=N$ the
count drops to $\approx10^4N$ and the ratio to $1.06$ ($N\le64$).}
\label{tab:fast}
\end{table}

The left-inverse error ($\approx10^{-11}$) is the polynomial consistency error
of the Chebyshev filters at $\varepsilon=10^{-10}$ and decreases with $\varepsilon$; with
exact per-frequency factors it is $10^{-15}$.
\subsection{Comparison of the setup choices}
Table~\ref{tab:setup} and Figure~\ref{fig:setup} compare the choices for the
data-independent setup: the spectral bounds $\Lambda_k$ by power iteration
(safety factor $1.5$) or by the explicit formula \eqref{eq:Lambda}, and the
outer interval by the Weinstein enclosure, by the computation-free choices
$(2/\log N,4.2)$, $(4/\log N,4.2)$, $(0.5,4.2)$, and, as a control, by the
overestimate $(8/\log N,4.2)$ which exceeds $\lambda_{\min}(E^\top F_pE)$ at every
size.  All safe choices give an exact left inverse ($10^{-12}$--$10^{-11}$) and
identical reconstructions (error ratio $1.31$--$1.36$ at $\tau=0.2N$, $1.07$ at
$\tau=N$); they differ only in work.  The explicit $\Lambda_k$ costs $12$--$24\%$ more
inner steps than the power-iteration bounds; the computation-free outer
interval costs a longer outer polynomial, from $+20\%$ ($4/\log N$, whose margin
shrinks with $N$) to $+90\%$ ($2/\log N$).  The overestimate is the one choice
that fails: with the interval wrong, the degree-$19$--$23$ polynomial leaves an
error of $3$--$5\cdot10^{-6}$ in the identity $L^n_p\nR=\Id$, three times the
tolerance in the exponent, for all $N$.  At $\tau=N$ the inner degrees halve
and the total work is the smallest of all, $1.0$--$1.8\cdot10^4N$ steps, with the
reconstruction closest to Moore--Penrose; its computation-free interval
$(1/\log N,4.2)$ (the smaller $\lambda_{\min}$ at $\tau=N$ calls for the smaller
constant) again costs $50$--$75\%$ more outer work.

\begin{table}[h]\centering\small
\begin{tabular}{llS[table-format=3.0]S[table-format=2.0]S[table-format=1.2e1]S[table-format=1.1e-2]S[table-format=2.2]@{\ s\hspace{\tabcolsep}}}
\toprule
$N$ & setup ($\Lambda_k$ / outer interval) & {$\sum d_k/N$} & {$D$} & {steps$/N$} & {$\|L^n_p\nR x-x\|/\|x\|$} & \multicolumn{1}{c}{setup}\\
\midrule
32 & power / Weinstein & 154 & 24 & 1.48e4 & 2.6e-12 & 0.69\\
   & explicit / Weinstein & 173 & 24 & 1.66e4 & 2.6e-12 & 0.60\\
   & explicit / $(2/\log N,4.2)$ & 173 & 40 & 2.77e4 & 1.2e-11 & 0\\
   & explicit / $(4/\log N,4.2)$ & 173 & 28 & 1.94e4 & 5.9e-12 & 0\\
   & explicit / $(0.5,4.2)$ & 173 & 35 & 2.42e4 & 1.6e-11 & 0\\
   & explicit / $(8/\log N,4.2)$, overestimate & 173 & 19 & 1.31e4 & 5.1e-6 & 0\\
   & explicit / Weinstein, $\tau=N$ & 79 & 32 & 1.01e4 & 9.9e-12 & 0.37\\
   & explicit / $(1/\log N,4.2)$, $\tau=N$ & 79 & 56 & 1.77e4 & 2.3e-11 & 0\\
\midrule
64 & power / Weinstein & 165 & 26 & 1.72e4 & 6.4e-12 & 3.4\\
   & explicit / Weinstein & 196 & 26 & 2.04e4 & 6.4e-12 & 3.7\\
   & explicit / $(2/\log N,4.2)$ & 196 & 43 & 3.37e4 & 2.2e-11 & 0\\
   & explicit / $(4/\log N,4.2)$ & 196 & 31 & 2.43e4 & 6.3e-12 & 0\\
   & explicit / $(0.5,4.2)$ & 196 & 35 & 2.74e4 & 1.6e-11 & 0\\
   & explicit / $(8/\log N,4.2)$, overestimate & 196 & 21 & 1.65e4 & 2.8e-6 & 0\\
   & explicit / Weinstein, $\tau=N$ & 89 & 38 & 1.36e4 & 1.1e-11 & 1.9\\
   & explicit / $(1/\log N,4.2)$, $\tau=N$ & 89 & 61 & 2.18e4 & 2.9e-11 & 0\\
\midrule
128 & power / Weinstein & 175 & 29 & 2.03e4 & 8.3e-12 & 22\\
   & explicit / Weinstein & 217 & 29 & 2.52e4 & 8.2e-12 & 22\\
   & explicit / $(2/\log N,4.2)$ & 217 & 47 & 4.08e4 & 1.8e-11 & 0\\
   & explicit / $(4/\log N,4.2)$ & 217 & 33 & 2.87e4 & 1.1e-11 & 0\\
   & explicit / $(0.5,4.2)$ & 217 & 35 & 3.04e4 & 1.6e-11 & 0\\
   & explicit / $(8/\log N,4.2)$, overestimate & 217 & 23 & 2.00e4 & 2.6e-6 & 0\\
   & explicit / Weinstein, $\tau=N$ & 99 & 45 & 1.77e4 & 1.3e-11 & 10\\
   & explicit / $(1/\log N,4.2)$, $\tau=N$ & 99 & 66 & 2.60e4 & 2.9e-11 & 0\\
\bottomrule
\end{tabular}
\caption{Setup choices for $L^n_p$ ($\varepsilon=10^{-10}$; $\tau=0.2N$ unless stated).
``steps'' are butterfly frequency-steps per application, each $\cO(N\log N)$;
``setup'' is the wall-clock of the data-independent setup on one core with
the C\texttt{++} row kernels (the Weinstein enclosure needs a few Lanczos steps, which
dominate, the power iteration $40$ butterfly products per frequency).  The error ratio to the
Moore--Penrose inverse on $1\%$ noise is $1.31,1.33,1.36$ ($\tau=0.2N$) and $1.07$
($\tau=N$) for every row.}
\label{tab:setup}
\end{table}

\begin{figure}[t]\centering
\includegraphics[width=0.98\textwidth]{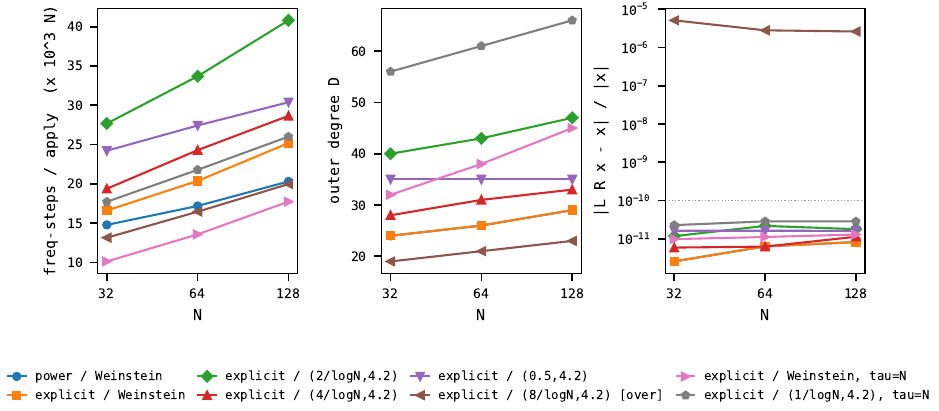}
\caption{The setup choices of Table~\ref{tab:setup}: work per application
(left), outer degree (middle) and left-inverse error (right) against $N$.  All
safe choices sit at the $10^{-11}$ level set by the polynomial consistency;
the overestimated interval (brown) fails by five orders of magnitude.}
\label{fig:setup}
\end{figure}
\subsection{The fast hybrid}\label{sec:numhyb}
Table~\ref{tab:fasthyb} compares $L^n_p$ with the
hybrid of Section~\ref{sec:fast} on a white-noise image ($\tau=0.2N$,
$\varepsilon=10^{-10}$, power-iteration bounds); wall times are for the C\texttt{++} row
kernels on one core.

\begin{table}[h]\centering\footnotesize\setlength{\tabcolsep}{4pt}
\begin{tabular}{llS[table-format=3.0]S[table-format=1.3]S[table-format=1.2]S[table-format=2.0]S[table-format=2.0]S[table-format=2.2]S[table-format=1.2]}
\toprule
$N$ & inverse & {$\sum_kd_k/N$} & \multicolumn{2}{c}{outer interval} & {$D$} & {$\frac{\text{flops}}{10^3N^2\log_2N}$} & {apply [s]} & {$\frac{\text{err}}{\text{MP}}$, $1\%$}\\
\cmidrule(lr){4-5}
 & & & {from} & {to} & & & & \\
\midrule
64  & $L^n_p$ & 165 & 0.85 & 3.95 & 26 & 34 & 0.94 & 1.35\\
    & hybrid, $G_1^{-1}$ & 110 & 0.125 & 0.92 & 33 & 24 & 0.68 & 1.19\\
    & hybrid, $(4\Id+8\Pi)^{-1}$ & 110 & 0.150 & 1.43 & 38 & 28 & 0.76 & 1.19\\
\midrule
128 & $L^n_p$ & 175 & 0.70 & 3.97 & 29 & 41 & 5.0 & 1.37\\
    & hybrid, $G_1^{-1}$ & 118 & 0.098 & 0.94 & 38 & 31 & 3.9 & 1.20\\
    & hybrid, $(4\Id+8\Pi)^{-1}$ & 118 & 0.119 & 1.44 & 43 & 35 & 4.3 & 1.20\\
\midrule
256 & $L^n_p$ & 184 & 0.53 & 3.98 & 34 & 50 & 27.6 & 1.36\\
    & hybrid, $G_1^{-1}$ & 125 & 0.080 & 0.95 & 42 & 37 & 20.7 & 1.20\\
\bottomrule
\end{tabular}
\caption{Fast implementations at $\tau=0.2N$, $\varepsilon=10^{-10}$.
Left-inverse identity $10^{-11}$ in all cases.  Wall times: C\texttt{++} row kernels,
one core.}
\label{tab:fasthyb}
\end{table}

The inner degree sums are $0.67\times$ those of $L^n_p$, the outer degrees
$1.3\times$, and the cost per application $0.70$--$0.76\times$; the wall times follow
($0.72$, $0.78$, $0.75\times$ for $N=64,128,256$).
The explicit truncated inverse of Lemma~\ref{lem:lorr} is cheaper per step
but needs more outer steps and loses overall, by $10$--$12\%$.

\subsection{The inner tolerance}\label{sec:inner}
Theorem~\ref{thm:fast}(i) says that $L^n_p\nR=\Id$ for every choice of the
inner degrees $d_k$; the tolerance $\varepsilon_{\rm in}$ to which $p_k$
approximates $(\lambda+\tau)^{-1}$ only shapes the oblique projection.  Table~\ref{tab:inner}
varies $\varepsilon_{\rm in}$ with the outer polynomial kept at $10^{-10}$.  Nothing
measurable changes except the cost: the left-inverse identity stays at
$10^{-11}$, the outer interval moves in the third digit, and the noise
amplification by at most $0.007$, while the inner degree sums fall from
$165$--$184N$ to $22$--$24N$ ($L^n_p$) and from $110$--$125N$ to $16$--$17N$ (hybrid),
and the wall times by a factor $5$--$7$.  The degree $d_k=\lceil\tfrac12\sqrt{1+\Lambda_k/\tau}\ln(2/\varepsilon_{\rm in})\rceil$
at $\varepsilon_{\rm in}=0.1$ is $\approx1.5\sqrt{1+\Lambda_k/\tau}$, essentially the
minimal degree that resolves the pole down to the scale $\tau$; the
projector is insensitive to the shape of the filter beyond that.  This is the
setting we recommend (with $\tau$ between $N$ and $5N$ for data that do not come
from the ADRT, Section~\ref{sec:skimage}); the rigorous bound $\lambda_{\max}(E^\top F_pE)\le4(1+\varepsilon_{\rm in})$
of Section~\ref{sec:bounds} becomes $4.4$ (measured $4.06$).

\begin{table}[h]\centering\small
\begin{tabular}{llS[table-format=3.0]S[table-format=3.0]S[table-format=2.2]S[table-format=2.2]S[table-format=1.3]S[table-format=1.3]}
\toprule
 & & \multicolumn{2}{c}{$\sum_kd_k/N$} & \multicolumn{2}{c}{apply [s]} & \multicolumn{2}{c}{err$/$MP}\\
\cmidrule(lr){3-4}\cmidrule(lr){5-6}\cmidrule(lr){7-8}
$N$ & $\varepsilon_{\rm in}$ & {$L^n_p$} & {hybrid} & {$L^n_p$} & {hybrid} & {$L^n_p$} & {hybrid}\\
\midrule
64  & $10^{-10}$ & 165 & 110 & 0.94 & 0.68 & 1.353 & 1.190\\
    & $10^{-2}$ & 38 & 25 & 0.23 & 0.20 & 1.354 & 1.191\\
    & $10^{-1}$ & 22 & 15 & 0.14 & 0.14 & 1.360 & 1.194\\
\midrule
128 & $10^{-10}$ & 175 & 118 & 5.0 & 3.9 & 1.367 & 1.199\\
    & $10^{-2}$ & 40 & 27 & 1.20 & 1.04 & 1.368 & 1.200\\
    & $10^{-1}$ & 23 & 16 & 0.75 & 0.71 & 1.373 & 1.205\\
\midrule
256 & $10^{-10}$ & 184 & 125 & 27.6 & 20.8 & 1.360 & 1.204\\
    & $10^{-2}$ & 42 & 29 & 6.7 & 5.7 & 1.361 & 1.205\\
    & $10^{-1}$ & 24 & 17 & 4.0 & 3.6 & 1.364 & 1.207\\
\bottomrule
\end{tabular}
\caption{Inner tolerance $\varepsilon_{\rm in}$ at fixed outer tolerance $10^{-10}$,
$\tau=0.2N$, white-noise image with $1\%$ noise; left-inverse identity
$10^{-12}$--$10^{-11}$ in every row.  Wall times: C\texttt{++} row kernels, one core.}
\label{tab:inner}
\end{table}

\subsection{Compiled implementation and comparison with the normal equations}\label{sec:compiled}
Wall times depend strongly on the implementation (in a NumPy prototype the
ADRT itself runs at about $0.2$\,Gflop/s against $1$\,Gflop/s for the
butterflies), so all timings in this paper are for compiled code.  The row
kernels of the oblique inverses (the butterflies, their adjoints and the
Chebyshev filter at one offset frequency, in cache) are written in C\texttt{++}, keeping the quadrant packing,
the FFTs, the dense low-frequency table and the outer polynomial of
Section~\ref{sec:inner} ($\varepsilon_{\rm in}=0.1$), and the normal-equation
polynomial of Remark~\ref{rem:normal} was run with the compiled ADRT and
back-projection of the \texttt{adrt} package \cite{Otness23} (version 1.2.0 as
distributed).  Table~\ref{tab:compiled} reports one application on one core of
an otherwise idle node; the left-inverse identity holds to $10^{-11}$ for the
oblique inverses and the normal-equation polynomial reaches the least-squares
solution to $7\cdot10^{-11}$.

\begin{table}[h]\centering\small
\begin{tabular}{l*{4}{S[table-format=2.3]}}
\toprule
$N$ & {normal equations [s]} & {$L^n_p$ [s]} & {hybrid [s]} & {hybrid, dense table [s]}\\
\midrule
64  & 0.064 & 0.140 & 0.123 & 0.097\\
128 & 0.552 & 0.750 & 0.665 & 0.517\\
256 & 4.93  & 4.01  & 3.27  & 2.80\\
512 & 56.1  & 22.4  & 18.9  & 16.9\\
\midrule
outer degree, $N=64,\dots,512$ & \multicolumn{1}{c}{198--1121} & \multicolumn{1}{c}{26--40} & \multicolumn{1}{c}{34--46} & \multicolumn{1}{c}{34--46}\\
error$/$least squares, $1\%$ & \multicolumn{1}{c}{1.00} & \multicolumn{1}{c}{1.36--1.38} & \multicolumn{1}{c}{1.21} & \multicolumn{1}{c}{1.21}\\
\bottomrule
\end{tabular}
\caption{One application on one core: the normal-equation Chebyshev polynomial
with the compiled ADRT of the \texttt{adrt} package, and the oblique inverses
with C\texttt{++} row kernels ($\tau=0.2N$, $\varepsilon_{\rm in}=0.1$, outer tolerance
$10^{-10}$; hybrid with $G_1^{-1}$ of degree $6$, dense table for inner degrees
above $8$).  Compute2 cluster, exclusive node.}
\label{tab:compiled}
\end{table}

The normal-equation polynomial is the fastest method up to $N=128$; from
$N=256$ on the oblique inverses are faster, $L^n_p$ by $1.2$ and $2.5$ times
and the hybrid with the dense table by $1.8$ and $3.3$ times at $N=256$ and
$512$.  The gain from $256$ to $512$ exceeds the factor $\sqrt2$ of the
operation counts because the normal-equation time grows by $11.4$ against the
$8.5$ predicted by its degree and transform cost, presumably through the
memory traffic of the full transform, whereas the oblique inverses work on
one offset-frequency row of $N$ entries at a time.  That this is not an
intrinsic advantage of the row-wise organization was checked directly:
evaluating ${\nR}^\top\nR$ itself row by row in the offset-Fourier domain with a C\texttt{++}
kernel of the same kind reproduces the transform pair to $10^{-15}$ but is
twice as slow as the compiled pair, the FFTs and the complex arithmetic
outweighing the locality.  The hybrid is the faster of the two oblique inverses,
as in Tables~\ref{tab:fasthyb} and~\ref{tab:inner}: its inner
work, which the C\texttt{++} kernels accelerate most, is two thirds of that of
$L^n_p$, and its dense table holds fewer than half as many rows.  All three methods
parallelize over offset frequencies or within each transform; with four
threads the C\texttt{++} oblique inverses run $2.2$ to $3$ times faster, but the
distributed \texttt{adrt} package is single-threaded, so the table compares
one core.

\subsection{A Shepp--Logan example}
Figure~\ref{fig:shepp} shows the textbook test: the (modified) Shepp--Logan
phantom on a $128\times128$ grid, data $b=\nR[x]+\nu$ with white noise
$\nu$ of relative size $\|\nu\|/\|\nR[x]\|=0,1\%,5\%,10\%$, and
the reconstructions by the explicit oblique inverse $L^n_p$ ($\tau=0.2N$,
$\varepsilon=10^{-10}$), by the hybrid of Section~\ref{sec:hyb} with the same
parameters, and by CGLS run to relative residual $10^{-8}$ (85--92 iterations;
this is the standard inversion and, since $\kappa(\nR)\approx10$ at this size, it
is the least-squares solution rather than a semi-converged one);
Table~\ref{tab:shepp} adds $L^n_p$ at $\tau=N$.  On noise-free data the oblique
inverses recover the phantom to $10^{-11}$ (CGLS: $8\cdot10^{-7}$ at its
stopping tolerance).  With noise, the relative errors are exactly proportional
to the noise level for all methods, as the inverses are linear, with the
constant ratios $1.38$ ($L^n_p$, $\tau=0.2N$), $1.21$ (hybrid, $\tau=0.2N$) and
$1.08$ ($L^n_p$, $\tau=N$) to the least-squares error; visually the
reconstructions are indistinguishable at $1\%$ and differ only in the grain of
the noise at $5$--$10\%$.  The cost of $L^n_p$ here is $2.2\cdot10^4N$ (resp.\
$1.1\cdot10^4N$ at $\tau=N$) butterfly frequency-steps per reconstruction and
that of the hybrid $0.76$ times as many flops (Table~\ref{tab:fasthyb}),
independent of the data, versus $85$--$91$ CGLS iterations of two full ADRTs
each.

\begin{table}[h]\centering
\begin{tabular}{l*{4}{S[table-format=1.2e-2]}}
\toprule
relative noise & {$0$} & {$1\%$} & {$5\%$} & {$10\%$}\\
\midrule
$L^n_p$, $\tau=0.2N$ & 1.2e-11 & 3.62e-2 & 1.81e-1 & 3.62e-1\\
hybrid, $\tau=0.2N$ & 1.1e-11 & 3.19e-2 & 1.60e-1 & 3.19e-1\\
$L^n_p$, $\tau=N$ & 2.3e-11 & 2.83e-2 & 1.42e-1 & 2.83e-1\\
CGLS ($10^{-8}$) & 8.1e-7 & 2.63e-2 & 1.32e-1 & 2.63e-1\\
\bottomrule
\end{tabular}
\caption{Relative reconstruction errors $\|\hat x-x\|/\|x\|$ for the Shepp--Logan
phantom, $N=128$.}
\label{tab:shepp}
\end{table}

\begin{figure}[p]\centering
\includegraphics[width=\textwidth]{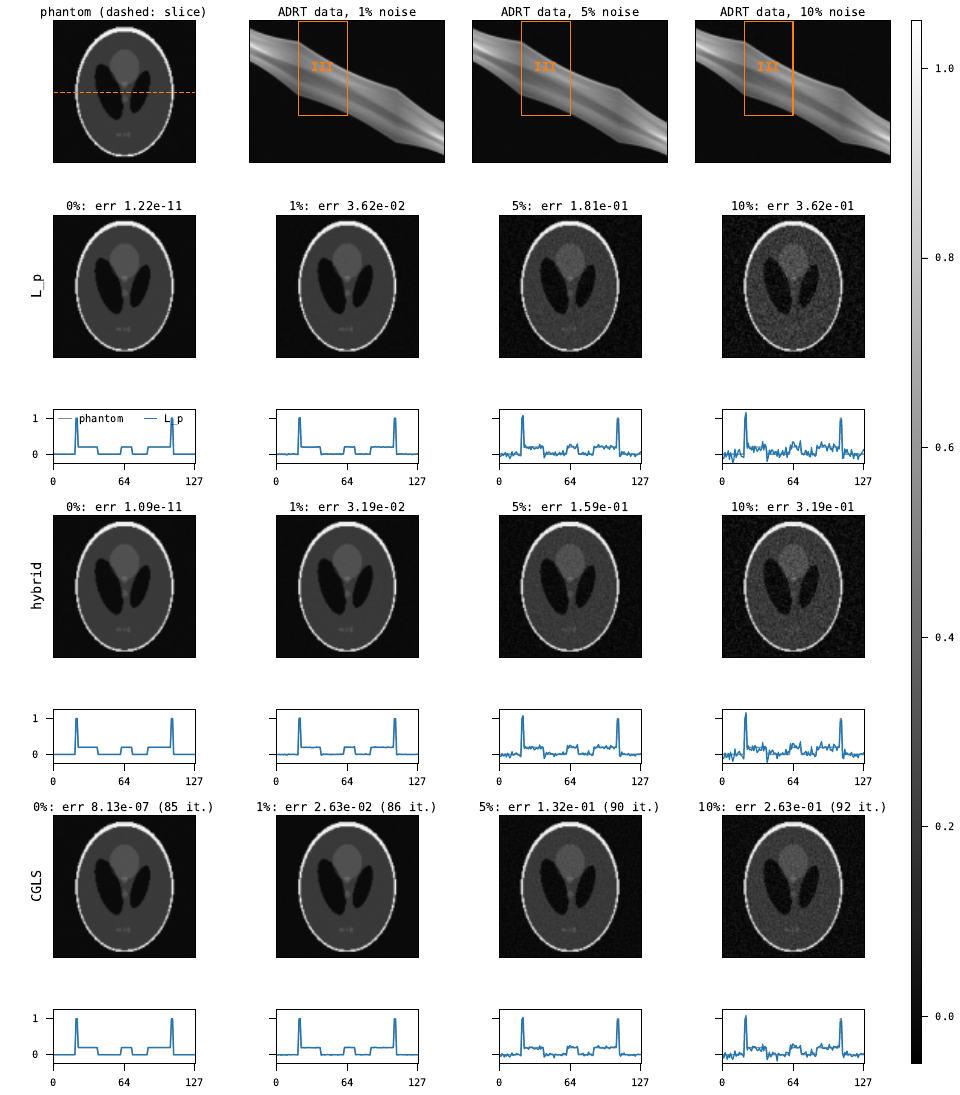}
\caption{Shepp--Logan phantom, $N=128$, $\tau=0.2N$.  Top row: the phantom
(the dashed line marks the slice shown below) and the full ADRT data at
$1\%,5\%,10\%$ relative noise, stitched across the four quadrants as in
\cite{Otness23}, with quadrant \RN{3} outlined.  Below, for the explicit
oblique inverse $L^n_p$, the hybrid and CGLS to relative residual $10^{-8}$:
the reconstructions at $0,1,5,10\%$ noise, each with the middle row of the
reconstruction against that of the phantom (grey).  Images and data share one
gray scale; the data are divided by the maximum of the noise-free data.
Errors as in Table~\ref{tab:shepp}.}
\label{fig:shepp}
\end{figure}

\subsection{Data from another discretization}\label{sec:skimage}
All data so far were generated by the ADRT itself.  Here they come from a
different discretization of the Radon transform: the straight-line projections
of \texttt{skimage.transform.radon} \cite{skimage}, which integrates along
lines by interpolation, converted to ADRT data in two ways.  The \emph{package
route} follows the tomography example of \cite{Otness23}: the projections are
computed at the angles of the ADRT lines and interpolated by a bicubic spline
at the continuous coordinates of every ADRT line; we multiply by $\cos\phi$, $\phi$
the angle of the line from the summation axis, to convert the arc-length
integral into the ADRT's one sample per column.  The \emph{direct route}
evaluates the projection on the straight line through the end points of each
digital line, by linear interpolation along the detector, with the sign and
centre conventions calibrated on a smooth test image.  The data differ from
$\nR x$ by the model mismatch $\|b-\nR x\|/\|\nR x\|$ of Table~\ref{tab:skimage}: a few
per cent, from the deviation of the digital lines from straight ones (up to
$\frac12\log N$ pixels) and from the interpolation.  We compare $L^n_p$ and the
hybrid ($\varepsilon_{\rm in}=0.1$) with the least-squares solution for the ADRT model
(CGLS to $10^{-10}$), for the smooth phantom of three Gaussians and the
Shepp--Logan phantom of Section~\ref{sec:num}, $N=128$.

\begin{table}[h]\centering\small
\begin{tabular}{llS[table-format=1.1]S[table-format=2.2]S[table-format=2.2]S[table-format=1.2]S[table-format=1.2]S[table-format=1.2]}
\toprule
 & & {mismatch} & {least squares} & \multicolumn{2}{c}{$L^n_p$$/$least squares} & \multicolumn{2}{c}{hybrid$/$least squares}\\
\cmidrule(lr){5-6}\cmidrule(lr){7-8}
route & phantom & {[\%]} & {error [\%]} & {$\tau=0.2N$} & {$\tau=5N$} & {$\tau=0.2N$} & {$\tau=5N$}\\
\midrule
package & smooth       & 2.6 & 4.26  & 1.20  & 0.99 & 1.05 & 1.00\\
        & Shepp--Logan & 8.2 & 39.6  & 1.15  & 1.00 & 1.08 & 1.00\\
direct  & smooth       & 0.7 & 0.37  & 10.4  & 1.33 & 6.6  & 1.16\\
        & Shepp--Logan & 5.0 & 25.9  & 1.46  & 1.03 & 1.34 & 1.02\\
\bottomrule
\end{tabular}
\caption{Data from \texttt{skimage.transform.radon}, no added noise, $N=128$:
model mismatch, error of the least-squares solution for the ADRT model, and
errors of the oblique inverses relative to it.}
\label{tab:skimage}
\end{table}

Two observations follow from Table~\ref{tab:skimage} and Figure~\ref{fig:skimage}.
First, the error is now dominated by the model: even the least-squares
solution, the best any inverse can do with the ADRT as the model, is at
$26$--$40\%$ for the Shepp--Logan phantom, because a mismatch of a few per cent
concentrated at the edges is amplified by the conditioning of the ADRT.  This
is a property of the ADRT as a model for straight-line data, not of the
inverses.  Second, the model error is not white, and the oblique inverses
treat it differently from least squares.  Least squares projects the
out-of-range part of the data orthogonally and discards it; an oblique
inverse projects along $\ker L$, so that a smooth, low-frequency mismatch can be
mapped into the image.  At $\tau=0.2N$ this costs up to a factor $10$ against least
squares (direct route, smooth phantom, where the mismatch is small and smooth),
and a factor $1.1$--$1.5$ when the mismatch is rougher.  With $1\%$ added noise the
ratios return to $1.2$--$1.9$.  Since $L^n_\tau$ tends to $(\nR)^\dagger$ as $\tau\to\infty$
(Remark~\ref{rem:filter}), a larger $\tau$ removes the effect: at $\tau=5N$ the
oblique inverses are within $1.00$--$1.33$ of least squares in every case, and at
$\tau=25N$ within $0.99$--$1.03$.  The cost hardly changes, as Section~\ref{sec:cond}
predicts for the total degree: on one core, $L^n_p$ takes $0.63$\,s at $\tau=5N$
against $0.76$\,s at $\tau=0.2N$ for $N=128$, and $3.8$\,s against $4.0$\,s for $N=256$
(the hybrid $0.91$ and $4.9$\,s against $0.75$ and $3.5$\,s); only at $\tau=25N$, where
the inner degrees reach their minimum, does the cost grow, by $1.4$--$2.7$ times.  For data that do not come from the ADRT
we therefore recommend $\tau$ between $N$ and $5N$.

\begin{figure}[p]\centering
\includegraphics[width=\textwidth]{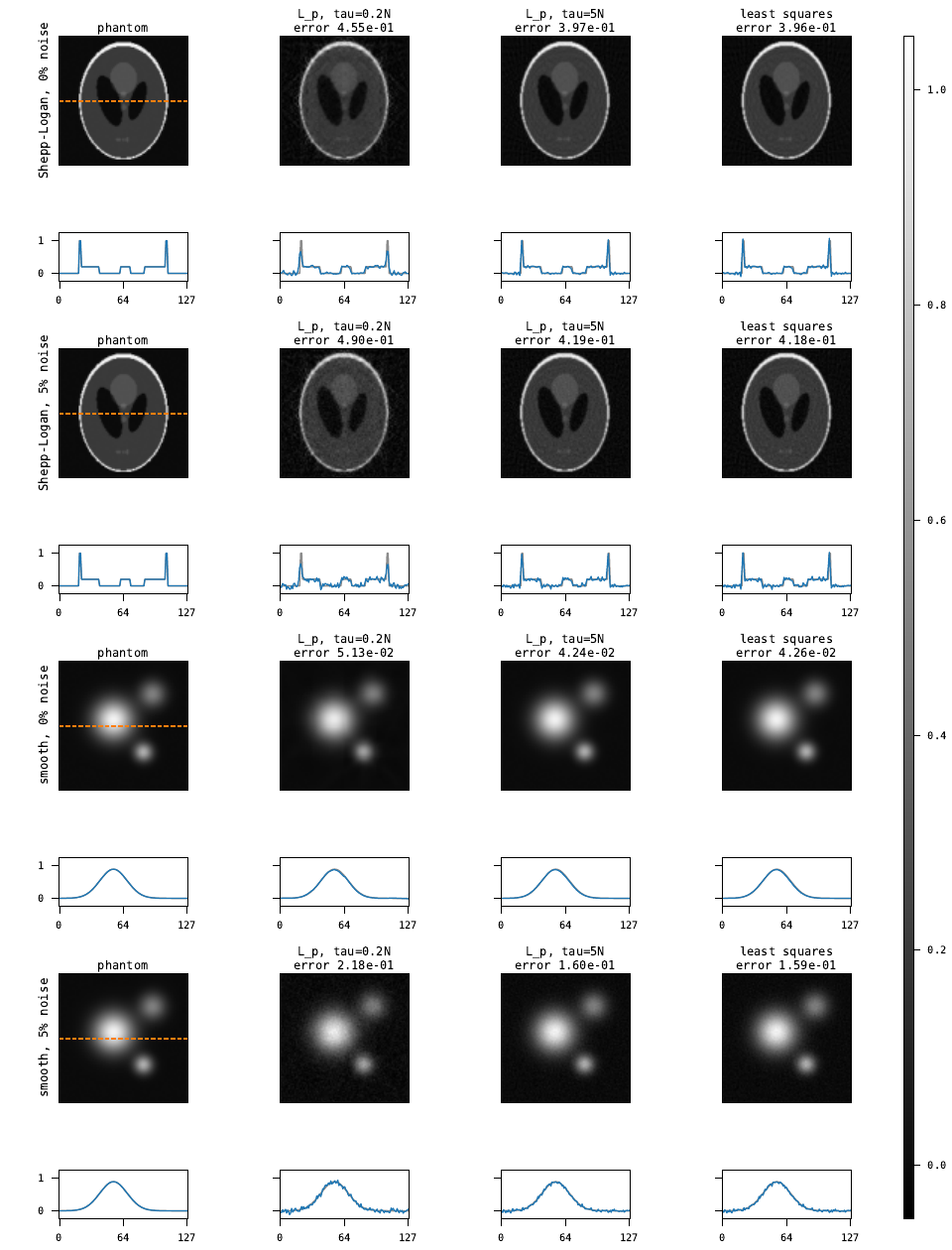}
\caption{Reconstructions from \texttt{skimage.transform.radon} data converted
by the package route, $N=128$, without added noise and with $5\%$ white noise
(rows): the phantom (dashed line: the slice shown below), $L^n_p$ at $\tau=0.2N$ and
$\tau=5N$, and the least-squares solution for the ADRT model.
Relative errors in the titles; slices of the middle row against the phantom
(grey) underneath.}
\label{fig:skimage}
\end{figure}

\begin{remark}[A Chebyshev polynomial for the Moore--Penrose inverse]\label{rem:normal}
A fixed-degree Chebyshev semi-iteration applied to the normal equations
${\nR}^\top\nR x={\nR}^\top b$ is also a data-independent polynomial,
\[
x_D=c_D({\nR}^\top\nR)\,{\nR}^\top b ,
\]
and in the same formal sense an ``explicit'' inverse.  Its interval
$[\lambda_{\min},\lambda_{\max}]({\nR}^\top\nR)$ is obtained exactly as for $L^n_p$,
by a few Lanczos steps with the Weinstein enclosure; $\lambda_{\max}({\nR}^\top\nR)=3.404N^2$
for all $N\le512$.  It differs from $L^n_p$ in three respects.
\begin{enumerate}
\item[(i)] \emph{Exactness.}  It is a left inverse only to the accuracy of the polynomial,
\[
\big\|c_D({\nR}^\top\nR){\nR}^\top\nR-\Id\big\|\le2\Big(\frac{\sqrt\kappa-1}{\sqrt\kappa+1}\Big)^D,
\]
whereas $L^n_p\nR=\Id$ holds for every choice of the inner degrees, which
affect only the oblique projection.  In practice both are limited by their
outer polynomial: at $\varepsilon=10^{-10}$ and $N=64,128,256$ the left-inverse
errors are $6\cdot10^{-11},8\cdot10^{-11},1.3\cdot10^{-9}$ for the normal
equations and $6\cdot10^{-12},8\cdot10^{-12},7\cdot10^{-12}$ for $L^n_p$.
\item[(ii)] \emph{Limit.}  Its limit is the Moore--Penrose inverse, not an oblique
projection: on $1\%$ noise its error is $1.000$ times that of $(\nR)^\dagger$
for $N=64$--$256$.
\item[(iii)] \emph{Cost.}  Since $\kappa({\nR}^\top\nR)=3.4N^2/\lambda_{\min}({\nR}^\top\nR)=0.54$--$0.62\,N^{3/2}$
(Section~\ref{sec:cond}), the degree for $10^{-10}$ is $D=44,71,117,198,340,595$
for $N=8,\dots,256$, that is $D\approx9N^{3/4}=\Theta(N/\sigma_{\min}(\nR))$, each
step costing one ADRT and one transpose.  The work is therefore
\[
\cO\Big(\frac{N^3\log N\log\varepsilon^{-1}}{\sigma_{\min}(\nR)}\Big)
\quad\text{against}\quad
\cO\Big(\frac{N^{5/2}\log^{3/2}N\log\varepsilon^{-1}}{\sigma_{\min}(\nR)}\Big)
\ \text{for } L^n_p .
\]
The oblique construction is asymptotically cheaper by $\sqrt N/\sqrt{\log N}$,
because its polynomials resolve the pole frequency by frequency instead of
through the whole transform; the factor $\sigma_{\min}(\nR)^{-1}$ is common to both.
\end{enumerate}
The constants favour the normal equations at small $N$, and the comparison
depends strongly on the implementation (Section~\ref{sec:compiled}).  With the
compiled ADRT of \cite{Otness23} on one core, the normal-equation polynomial
is the fastest of the three methods up to $N=128$.  From $N=256$ on the oblique
inverses are faster, the hybrid by $1.8$ and $3.3$ times at $N=256$ and $512$
(Table~\ref{tab:compiled}).

A symbol preconditioner does not change this.  The preconditioned operator is
$E^\top F_\tau E$ in the limit $\tau\to\infty$ (Remark~\ref{rem:filter}), whose
conditioning is $\tau/\lambda_{\min}({\nR}^\top\nR)$ (Section~\ref{sec:cond}), so
the preconditioned polynomial has the cost and structure of $L^n_\tau$.

In summary, the oblique construction contributes a left inverse that is exact
at every degree, with every factor in closed form, a better asymptotic
exponent, and, in compiled single-core implementations, a lower cost than the
normal-equation polynomial from $N=256$ on.
\end{remark}
\section{The conditioning of the frame operator}\label{sec:cond}
The outer polynomial degree $D$ is the one quantity in Theorem~\ref{thm:fast}
for which no a priori bound was given: $\lambda_{\max}(E^\top F_pE)\le4(1+\varepsilon)$
is explicit, so $D$ is governed by $\lambda_{\min}(E^\top F_pE)$.  This section
reports what determines it.  The experiments (Appendix~\ref{app:code}) use the Tikhonov filter, to which the
polynomial filters agree to $\varepsilon/\tau$.

\subsubsection*{Straight lines and the continuum.}
Replace the digital lines by the exact lines $\delta_s(t)=st/(N-1)$ in
\eqref{eq:Ak}, i.e.\ $A_k(s,t)=e^{\mathrm i\omega_kst/(N-1)}$, keeping everything
else (window, quadrants, filters).  For this straight-line transform
$\lambda_{\min}(R^\top R)=1.98N$ for every $N\le128$ and
$\lambda_{\min}(E^\top F_\tau E)=0.911,0.907,0.908$ for $N=8,16,32$ at $\tau=0.2N$, which is
the continuum value: each quadrant is the Fourier multiplier
$\mathbf 1_{\mathcal C_q}(\xi)\,c|\xi|^{-1}/(c|\xi|^{-1}+\tau)$ of its cone, the four cones
cover the plane, and at the Nyquist frequency the eigenvalue of $R^\top R$ is
$2\pi N/\pi=2N$, so $\lambda_{\min}(E^\top F_\tau E)=2N/(2N+0.2N)=0.909$.  For straight lines
$\kappa(E^\top F_\tau E)$ is therefore bounded, $D=\cO(\log\varepsilon^{-1})$, and the
construction of Section~\ref{sec:poly} has cost $\cO(N^2\log^{3/2}N\log\varepsilon^{-1})$.

\subsubsection*{The smallest singular value of the ADRT\@.}
With digital lines $\lambda_{\min}(E^\top F_\tau E)$ decays, also in a periodic
variant without the window $E$, and the minimizer is not a plane wave (the
plane-wave Rayleigh quotients stay above $1.3$).  Asymptotically the decay is
that of the ADRT itself: on the directions where $M_q$ has eigenvalues
$\ll\tau$ the filter is $\lambda/\tau$, so
$\lambda_{\min}(E^\top F_\tau E)\approx\lambda_{\min}(R^{n\top}R^n)/\tau$ once
$\lambda_{\min}(R^{n\top}R^n)\lesssim\tau$.  At $\tau=N$ the minimizers of the two
operators coincide from $N=64$ on (overlap $0.94$, $0.97$ at $N=64,128$) and
$\lambda_{\min}(E^\top F_\tau E)=0.40,0.30$ against $\lambda_{\min}(R^{n\top}R^n)/\tau=0.79,0.53$;
at $\tau=0.2N$ the crossover is near $N=10^3$.  And
\[
\lambda_{\min}(R^{n\top}R^n)=15.5,\ 24.5,\ 35.7,\ 50.3,\ 68.0,\ 88.7\qquad(N=8,16,32,64,128,256),
\]
growing by factors $1.58,1.46,1.41,1.35,1.30$ per doubling, still decreasing,
i.e.\ $\sigma_{\min}(R^n)$ grows like $N^{0.2}$ or slower against $\sqrt{2N}$ for
straight lines.  Hence $\kappa(E^\top F_\tau E)\approx\tau/\lambda_{\min}(R^{n\top}R^n)$
grows like $N^{1/2}$ or slightly faster for $\tau\propto N$, not like $\log N$:
the fit $4/\log_2N$ of Section~\ref{sec:bounds} describes the pre-asymptotic range.

\subsubsection*{Independence of the total degree from $\tau$.}
With $D\approx\sqrt{4\tau/\lambda_{\min}(R^{n\top}R^n)}\,\ln(2/\varepsilon)$ and
$d_k\approx\tfrac12\sqrt{\Lambda_k/\tau}\ln(2/\varepsilon_{\rm in})$,
\begin{align*}
\Big(\sum_kd_k\Big)D\ &\approx\ \sum_k\sqrt{\frac{\Lambda_k}{\lambda_{\min}(R^{n\top}R^n)}}\,\ln\frac{2}{\varepsilon_{\rm in}}\,\ln\frac2\varepsilon\\
&=\cO\Big(\frac{N^{3/2}\sqrt{\log N}}{\sigma_{\min}(R^n)}\log\varepsilon_{\rm in}^{-1}\log\varepsilon^{-1}\Big),
\end{align*}
independently of $\tau$ (observed: $2.0\cdot10^4N$ and $1.8\cdot10^4N$ steps at
$\tau=0.2N$ and $\tau=N$, $N=128$).  The same holds for every filter $r$: since
$\langle F_ru,u\rangle\le\max r\cdot\langle R^{n\top}R^nu,u\rangle$, a frame operator
with $\lambda_{\min}\ge c$ needs $\max r\ge c/\lambda_{\min}(R^{n\top}R^n)$, i.e.\ the
filter must resolve $M_k$ down to eigenvalues of order $\lambda_{\min}(R^{n\top}R^n)$,
at the corresponding polynomial degree.  The cost of the family is therefore
$\cO(N^{5/2}\log^{3/2}N\log\varepsilon^{-1}/\sigma_{\min}(R^n))$ at fixed $\varepsilon_{\rm in}$: $\cO(N^2\log^{3/2}N)$ for
straight lines and, with the measured $\sigma_{\min}(R^n)\sim N^{1/4}$, about
$\cO(N^{2.25}\log^{3/2}N)$ for the ADRT.  In the range $N\le256$ the measured steps
per application ($1.04,1.35,1.60,1.92,2.24,2.72\cdot10^4N$) grow by $1.17$--$1.3$
per doubling, which does not distinguish $N\log N$ from $N^{1.26}$; the mechanism
decides for the power.  Press's power $q=2$ is thus attained by the oblique
inverse for straight lines and missed by $\sigma_{\min}(R^n)^{-1}$ for the ADRT.

\subsubsection*{Minimizing eigenvectors of $R^{n\top}R^n$.}
The minimizers of $R^{n\top}R^n$ (``ghosts'') are symmetric under the dihedral
group, so that all four quadrants see them equally; their Fourier transform is
concentrated on the eight frequencies $(\pm a,\pm b),(\pm b,\pm a)$ with
$2\pi b/N=2.16,2.06,2.11$ for $N=32,64,128$, i.e.\ at $\omega\approx2\pi/3$, and
$83\%$ of their sinogram energy lies within two grid frequencies of $2\pi/3$.
They sit inside the Fourier cone of quadrants \RN{1}/\RN{2}, where a straight
line would give the eigenvalue $2\pi N/\omega\approx190$ ($N=128$), but the digital
quadrant sees them with $17$.  The frequency $2\pi/3$ is where $2\sin\frac\omega4=1$, so
that $z^3=1$ and $A_\omega(s,t)=z^{\delta_s(t)\bmod3}$ takes three values and every
entry of $M_\omega$ has modulus $\ge1$ (Lemma~\ref{lem:kernel}); single quadrants
are not nearly singular there ($\sigma_{\min}(A_\omega)=1.3$--$1.6$), the four cones
fail to cover jointly.

\subsubsection*{Integer sampling.}
The mechanism is the sampling of lines at integer offsets, not the dyadic
structure of the ADRT.  For the Bresenham lines $\delta_s(t)=\lfloor st/(N-1)+\tfrac12\rfloor$ one finds
$\lambda_{\min}(R^\top R)=15.8,28.9,49.9$ for $N=16,32,64$, the same as for the ADRT
($24.5,35.7,50.3$).  Any line sampled at integer offsets carries the rounding
phase $e^{-\mathrm i\omega\{\sigma t\}}$, whose Fourier coefficients
$|c_m(\omega)|=2|\sin\frac\omega2|/|2\pi m-\omega|$ have a divergent sum, so that at
$\omega$ of order one a superposition of aliases $\eta+2\pi m\sigma$ can cancel the
slice of every slope; a straight line with exact phases has no aliases.  The
continuum floor survives only well below the Nyquist frequency: restricted to
images band-limited to $c_f$ times Nyquist in both coordinates,
$\lambda_{\min}(R^{n\top}R^n)/N=6.0,6.3,6.3,6.4$ for $c_f=0.25$ ($N=16,\dots,128$),
$3.1,3.0,2.5,2.0$ for $c_f=0.5$, $3.1,2.1,1.4,1.0$ for $c_f=0.6$, and the unrestricted
values from $c_f=0.75$ on (the ghosts live at $|\xi|\approx N/3$).  A comparison
theorem with the continuous Radon transform, whose conditioning is known
\cite[Thm.~II.5.1]{Natterer}, can therefore certify $\lambda_{\min}\ge cN$ on the
lower third of the spectrum but says nothing about the ghosts.

\subsubsection*{Unsuccessful remedies.}
Three remedies were tested and fail.  (i) A frequency-dependent $\tau_k$ that is
small in a band around $2\pi/3$: at $N=64$, $\tau=N$, $\lambda_{\min}$ rises only from
$0.40$ to $0.48,0.52,0.58$ for bands of $\pm2,4,8$ grid frequencies, because the
weak directions are spread over $\omega\in[1.8,2.3]$.  (ii) Deflation of the lowest
eigenvectors of $E^\top F_\tau E$: the low tail is a bulk, not a few modes ($16$
eigenvalues below $0.9$, $143$ below $1.0$, $590$ below $1.2$ at $N=64$, $\tau=0.2N$).
(iii) A shifted outer solve $(E^\top F_pE+\mu)^{-1}$, which has the explicit
interval $[\mu,4+\mu]$: since the whole spectrum is $\cO(1)$, the bias
$\mu(E^\top F_pE+\mu)^{-1}$ touches every mode, and the noiseless Shepp--Logan
error becomes $5.6\%$ at $\mu=0.1$ and $15\%$ at $\mu=0.3$.  The exact inverse with
the a posteriori Weinstein enclosure remains the only rigorous choice of $D$
that costs no accuracy.

\subsubsection*{Conditioning of the hybrid.}  The smallest eigenvalue of the
hybrid's outer operator $G_1^{-1}G_h$ (Section~\ref{sec:hyb}) decays like
that of $E^\top FE$: $0.263$, $0.208$, $0.164$, $0.127$ ($\tau=0.2N$) and
$0.139$, $0.107$, $0.083$, $0.063$ ($\tau=N$) for $N=8,\dots,64$, factors $0.77$--$0.80$ per
doubling in both constructions.  This is expected: with $w=\Sone x/\|\Sone x\|$,
\[
\langle\widehat U^\top\Phi'\,\widehat Uw,w\rangle\le\max p\cdot\frac{\|\nR x\|^2}{\|\Sone x\|^2} ,
\]
and for the ghosts above (offset frequency near $2\pi/3$, where
the level-1 singular values are between $1$ and $\sqrt3$) the denominator is
$\cO(\|x\|^2)$ while $\|\nR x\|^2=\lambda_{\min}({\nR}^\top\nR)\|x\|^2$.  The conditioning of
the hybrid is therefore again $\approx\tau/\lambda_{\min}({\nR}^\top\nR)$ up to a
constant, and the total degree $(\sum_kd_k)D\sim N^{3/2}/\sigma_{\min}(\nR)$ is
unchanged in order.  The explicit level-1 inverse cannot help with the ghosts
because they are not a level-1 phenomenon.

\subsubsection*{Per-frequency lower bounds.}
The simplest rigorous route would replace each quadrant Gram by its smallest
eigenvalue, $M_\omega\succeq\sigma_{\min}(A_\omega)^2\Id$, so that
${\nR}^\top\nR\succeq W:=2\sum_{\omega}\sigma_{\min}(A_\omega)^2\big(P^v_\omega+P^h_\omega\big)$
with $P^{v,h}_\omega$ the vertical and horizontal offset-frequency projections
on the padded torus, an operator whose smallest eigenvalue is computable.
It is useless: $\sigma_{\min}(A_\omega)^2$ is $2.1$--$2.3$ for $|\omega|\ge2\pi/3$ and
$0.28$--$0.31$ on $[\pi/2,2\pi/3)$, but below $|\omega|=\pi/2$ it collapses
($\le4\cdot10^{-3}$ on $[\pi/4,\pi/2)$, $0$ to working precision below $\pi/4$), so
$\lambda_{\min}(W)\le4\cdot10^{-3}$ for $N\ge8$ while $\lambda_{\min}({\nR}^\top\nR)=15.5,24.5,35.7$.  The images that defeat $W$ are smooth in
both directions, and for those the Gram $M_\omega$ is large on the relevant
vectors (the continuum regime) even though its smallest eigenvalue vanishes.
A bound therefore has to be two-dimensional: it must control $M_\omega$ on the
subspace of column vectors with low horizontal frequency.  The continuum
comparison does this only for $|\xi|\lesssim0.4\pi$ (the band-limited floor
above), while the per-quadrant singular-value floor starts at $|\omega|\ge\pi/2$;
the ghosts, at $|\xi|\approx2.2$, sit in the gap between the two regimes, which
is why neither argument reaches them.

\begin{remark}[Fusing through more levels]\label{rem:m0}
With the filter on levels $m_0+1,\dots,n$ and the fusion through
$\npmz{S}{m_0}\cdots\Sone$ ($\tau=0.2N$): the
inner degree sums drop to $83,53,33$ ($/N$, $N=32$) for $m_0=1,2,3$, while the
unpreconditioned outer conditioning rises to $7.7,18.2,42.8$, so that
$(\sum_kd_k)\sqrt\kappa=230,226,216$ is essentially flat; the preconditioner
$({\npmz{S}{m_0}}^\top\cdots{\Sone}^\top\Sone\cdots\npmz{S}{m_0})^{-1}$ would bring $\kappa$
to $5.4,7.3,6.5$ and the product to $194,144,84$, but for $m_0\ge2$ it is the
Gram of several levels and has no explicit decomposition (for $m_0=n$ it is
$({\nR}^\top\nR)^{-1}$ itself).  The same pattern holds at $N=8,16$.  The level-1
hybrid is thus the member of this family for which the preconditioner is
explicit, and the gain from going further is limited by the lack of one.
\end{remark}

\subsubsection*{Summary.}
The conditioning of the oblique inverses is not a property of the filters or
of the window but of the ADRT: $\kappa(E^\top F_\tau E)\approx\tau/\lambda_{\min}(R^{n\top}R^n)$,
and $\sigma_{\min}(R^n)$ is set by the aliasing of integer-sampled lines near
$\omega=2\pi/3$, a mechanism shared by every integer-line Radon transform and
absent for exact lines.  A rigorous a priori bound on $D$ that does not inflate
the cost is equivalent to a lower bound on $\sigma_{\min}$ of the ADRT of the
right order; the continuum comparison gives it on the lower third of the
spectrum, and the pole bound $\sigma_{\min}(A_\omega)\ge(2\sin\frac\omega4)^n\ge1$ for
$|\omega|\ge2\pi/3$ gives an $\cO(1)$ floor per quadrant at the ghost frequencies,
which is too weak by $\sigma_{\min}(R^n)^2$.  Closing this gap is the open problem.
\section{Discussion}\label{sec:disc}
The construction answers, for oblique projections, the question of an
explicit inverse of the ADRT for out-of-range data: $L^n_\theta$, $L^n_\tau$ and
$L^n_p$ are exact left inverses written in closed form from the butterfly
representation of the single-quadrant levels, with a single well-conditioned
SPD operator inverted by a fixed polynomial, whose degree is governed by
$\sigma_{\min}$ of the ADRT (Section~\ref{sec:cond}).  What remains non-explicit is the
orthogonal projection: $(\nR)^\dagger$ requires $({\nR}^\top\nR)^{-1}$, and
${\nR}^\top\nR=\sum_dZ^{(d)}\otimes K_d+K_d\otimes Z^{(d)}$ (with $Z^{(d)}$ the shift by $d$
and explicit integer generators $K_d(j,j')=\#\{s:\delta_s(j')-\delta_s(j)=\pm d\}$)
is a two-level Toeplitz-like operator for which no superfast direct solver
is known \cite{ChanJin}; Press's multigrid \cite{Press06} and the iterations
compared in \cite{LORR} remain the methods of choice for it.  The family
$L^n_\tau$ shows that the gap is one of degree: as $\tau\to\infty$ the oblique
inverse tends to $(\nR)^\dagger$ while $\kappa(E^\top F_\tau E)$ tends to
$\kappa({\nR}^\top\nR)$; at $\tau=N$ one is within $7\%$ of Moore--Penrose at
$\kappa\approx13$.  Remark~\ref{rem:normal} puts the construction in perspective: a
fixed-degree Chebyshev polynomial of the normal equations, of degree
$\Theta(N/\sigma_{\min}(\nR))$, reaches the Moore--Penrose solution and, with a
compiled ADRT, is the fastest method up to $N=128$; the oblique inverses are
asymptotically cheaper by $\sqrt N/\sqrt{\log N}$, faster from $N=256$ on (the
hybrid by $3.3$ times at $N=512$), and exact at every degree, and both carry
the same factor $\sigma_{\min}(\nR)^{-1}$.

The hybrid of Section~\ref{sec:hyb}, SPIFE on the cross-quadrant level and
the polynomial filters on the upper levels, is an exact left inverse that is
better than $L^n_p$ by constant factors: $24$--$30\%$ less work per application at equal
$\tau$ and $\varepsilon$ in the fast implementation (Section~\ref{sec:fast}), and
noise amplification $1.14$--$1.21\times$ Moore--Penrose instead of
$1.30$--$1.38\times$ at $\tau=0.2N$.  The two ingredients that make it
work are the exact fusion of the quadrants at level 1 through ${\Sone}^\top$ and
the explicit $G_1^{-1}$ (Lemma~\ref{lem:g1}) as a preconditioner; the
truncated level-1 inverse of \cite{LORR} (Lemma~\ref{lem:lorr}) is cheaper still but
preconditions less well.  It does not change what
governs the conditioning: the outer spectrum still shrinks like
$\lambda_{\min}({\nR}^\top\nR)/\tau$, because the weak directions of the ADRT live
in the upper levels.  A natural extension, fusing through the first $m_0$
levels with the filter on levels $m_0+1,\dots,n$, lacks an explicit
preconditioner for $m_0\ge2$ (the Gram of two levels is no longer explicit);
Remark~\ref{rem:m0} reports its unpreconditioned conditioning.  The growth
$\sigma_{\min}(R^n)\sim N^{1/4}$ is measured but not proved, and proving it
remains open.  The asymptotic exponent of both constructions depends on it.

\section*{Acknowledgements}
The author used Claude (Anthropic), with the models Claude Fable~5.1 and Claude
Opus~5.5, as an AI assistant for code development, numerical experiments, and
drafting and editing of the manuscript.  All results were checked by the
author, who takes full responsibility for the content.

\appendix
\section{List of notation}\label{app:notation}
Symbols are listed in order of appearance within each group, with the place
where they are defined.

{\small
\begin{longtable}{@{}p{0.25\textwidth}p{0.57\textwidth}p{0.12\textwidth}@{}}
\toprule
symbol & meaning & defined in\\
\midrule
\endfirsthead
\toprule
symbol & meaning & defined in\\
\midrule
\endhead
\bottomrule
\endfoot
\multicolumn{3}{@{}l}{\emph{General}}\\[2pt]
$N=2^n$ & image size; $\log$ is the logarithm of base two & \S\ref{sec:prelim}\\
$[m]$, $[m]_1$ & $\{0,\dots,m-1\}$ and $\{1,\dots,m\}$ & \S\ref{sec:prelim}\\
$a\brem b$ & remainder of $a$ modulo $b$ & \S\ref{sec:prelim}\\
$\Id_m$, $\Id$ & identity matrix & \S\ref{sec:prelim}\\
$x(i,j)$ & image entries, $i$ the row index & \S\ref{sec:prelim}\\
$A^\dagger$, $A^+$ & Moore--Penrose pseudo-inverse & \S\ref{sec:prelim}\\
$\kappa(\cdot)$, $\sigma_{\min}(\cdot)$ & condition number, smallest singular value & \\
$\rg$, $\ker$ & range, kernel & \\
$\nu$ & data noise, $b=\nR[x]+\nu$ & \S\ref{sec:left}\\[4pt]
\multicolumn{3}{@{}l}{\emph{The ADRT}}\\[2pt]
$D_n(h,s)$ & digital line of offset $h$ and slope $s$ & \eqref{eq:dline}\\
$\delta_s(t)$, $w_j(s)$ & row of the digital line in column $t$; its bit weights & \eqref{eq:delta}\\
$E^n_m$, $E^n$ & index sets of level-$m$ data and of all lines & \eqref{eq:Rnn}\\
$R^n_n$ & single-quadrant ADRT & \eqref{eq:Rnn}\\
$\nm{S}$ & single-quadrant level $m$ & \eqref{eq:Snm}\\
$T^n_q$, $q=\RN{1},\dots,\RN{4}$ & quadrant permutations & \eqref{eq:permute}\\
$\nR$ & full ADRT, $\npmz{S}{n}\cdots\npmz{S}{1}$ & \S\ref{sec:prelim}\\
$\Sone$, $\npmz{S}{m}$ & cross-quadrant level; single-quadrant levels $\Id_4\otimes\nm{S}$ & \S\ref{sec:prelim}\\
$(\nR)^\dagger_{\rm SPIFE}$ & explicit inverse of \cite{LORR} for data in the range & \eqref{eq:spife}\\[4pt]
\multicolumn{3}{@{}l}{\emph{Offset-Fourier domain}}\\[2pt]
$L$ & cyclic period of the offsets, $L=2N$ & \S\ref{sec:fourier}\\
$\cR^n_n$ & cyclic single-quadrant ADRT & \eqref{eq:cyc}\\
$\iota$, $E$ & embedding of data into cyclic strips; zero padding of images & \S\ref{sec:fourier}\\
$\mathrm{DFT}_h$, $\omega_k$, $z$ & DFT in the offset; frequency $2\pi k/L$; $z=e^{\mathrm i\omega_k}$ & \S\ref{sec:fourier}\\
$A_k$ ($A_\omega$) & single-quadrant transform at offset frequency $\omega_k$ & \eqref{eq:Ak}\\
$\mathcal B^n_m$, $\beta$ & butterfly of level $m$ and its $2\times2$ block & \eqref{eq:butterfly}\\
$M_k$ ($M_\omega$) & Gram matrix $A_k^*A_k$ & \S\ref{sec:fourier}\\
$\mathcal C_q$ & Fourier cone of quadrant $q$ & \S\ref{sec:fourier}\\[4pt]
\multicolumn{3}{@{}l}{\emph{Oblique inverses}}\\[2pt]
$\rho(L)$ & stability ratio $\|L\|_2\,\sigma_{\min}(\nR)$ of a left inverse & \S\ref{sec:left}\\
$L_W$ & quadrant combination of exact single-quadrant inverses & \eqref{eq:LW}\\
$\theta$, $P_k$ & threshold; spectral projector of $M_k$ & \S\ref{sec:partition}\\
$\Phi_q$, $F$ & quadrant projectors and frame operator & \eqref{eq:Phi}\\
$\cR_q$, $M_q$, $r(M_q)$ & cyclic transform and Gram operator of quadrant $q$; function of it & \eqref{eq:Mq}\\
$\cR_q^+$ & quadrant-wise cyclic pseudo-inverse & \S\ref{sec:partition}\\
$L^n_\theta$ & spectral-partition oblique inverse & \eqref{eq:Ltheta}\\
$\tau$, $F_\tau$, $L^n_\tau$ & Tikhonov parameter, frame operator and inverse & \eqref{eq:Ltau}\\[4pt]
\multicolumn{3}{@{}l}{\emph{Polynomial filters}}\\[2pt]
$p_k$, $d_k$ & Chebyshev filter at frequency $\omega_k$ and its degree & \S\ref{sec:poly}\\
$\Lambda_k$ & upper bound for $\lambda_{\max}(M_k)$ & \eqref{eq:Lambda}\\
$\Lambda^{\rm cert}_k$, $r_m(\alpha)$ & certified bound; row-sum recursion & \eqref{eq:cert}\\
$F_p$, $L^n_p$ & polynomial-filter frame operator and inverse & \eqref{eq:Lp}\\
$\varepsilon$, $\varepsilon_{\rm in}$ & outer and inner polynomial tolerances & Thm.~\ref{thm:fast}\\
$D$, $c_D$ & degree of the outer Chebyshev polynomial; the polynomial & \S\ref{sec:stab}\\[4pt]
\multicolumn{3}{@{}l}{\emph{Hybrid inverse}}\\[2pt]
$Q$ & upper levels $S^n_n\cdots S^n_2$ & \S\ref{sec:hyb}\\
$A'_k$, $M'_k$ & upper-level butterflies and their Gram matrix & \S\ref{sec:hyb}\\
$\Phi'$, $G_1$, $G_h$ & filtered upper-level Gram; level-1 Gram; hybrid Gram & \eqref{eq:hybnot}\\
$\widetilde G$, $L_{\rm hyb}$ & preconditioned outer operator; hybrid inverse & \eqref{eq:hyb}\\
$K$, $J$ & $\mathrm{tridiag}(1,2,1)$; pair swap & Lemma~\ref{lem:g1}\\
$\Pi$, $M$ & $2\times2$-cell average; truncation of level-1 data & Lemma~\ref{lem:lorr}\\
\end{longtable}
}

\section{No trigonometric diagonalization of the normal operator}\label{app:galois}
The levels of the ADRT have explicit singular value decompositions whose
singular values, $2\cos\frac{k\pi}{4N+2}$ \cite[Thm.~3.5]{LORR} or, in the
offset-Fourier domain, $2|\cos\frac\omega4|$ and $2|\sin\frac\omega4|$, are values
of trigonometric functions at rational multiples of $\pi$ and hence lie in
cyclotomic fields.  The composition of the levels does not have this property.

\begin{proposition}[exact computation]\label{prop:galois}
Let $G={\nR}^\top\nR$, an integer matrix.  For $N=4$ its characteristic
polynomial factors over $\QQ$ as
\[
\begin{aligned}
&(\lambda-9)(\lambda-11)^2(\lambda-13)\,(\lambda^3-39\lambda^2+494\lambda-2036)\\
&\qquad\cdot(\lambda^3-45\lambda^2+630\lambda-2780)^2\,(\lambda^3-83\lambda^2+1750\lambda-10948),
\end{aligned}
\]
where the three cubics are irreducible with discriminants $7060$, $191700$ and
$7727188$, none of which is a square, so that each has Galois group $S_3$.  For
$N=8$ the irreducible factors have degrees $6,6,10,10$ and $16$, the last with
multiplicity two, and both sextics have Galois group $S_6$.
\end{proposition}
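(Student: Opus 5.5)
The statement is an exact computation, so the plan is to make every step of it a certificate that can be checked independently of floating point.

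\emph{Building the matrix.} First I construct $G={\nR}^\top\nR$ exactly as an integer matrix. I use Lemma~\ref{lem:bitlinear} for the digital lines $D_n(h,s)$ and the permutations \eqref{eq:permute} for the four quadrants. The entry $G((i,j),(i',j'))$ is the number of pairs (quadrant, line) whose digital line contains both pixels. Two sanity checks follow immediately. The row sums must equal $4N\cdot N$, since every line through a pixel has $N$ pixels. The trace must equal $4N\cdot N^2=4N^3$, since through each pixel there is exactly one line per slope per quadrant. For $N=4$ the trace is $256$. This agrees with the stated factorization, whose roots sum to $9+22+13+39+90+83=256$, and whose factor degrees sum to $1+2+1+3+6+3=16$. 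For $N=8$ the degrees $6+6+10+10+2\cdot16$ sum to $64$, as they should.

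\emph{Characteristic polynomial and factorization.} Next I compute the characteristic polynomial in exact integer arithmetic, using a division-free method such as Berkowitz or Hessenberg reduction over $\QQ$, and factor it over $\QQ$ with a standard Zassenhaus/LLL factorizer. To explain the squared factors and to cut down the size of the computation, I would first block-diagonalize $G$. Its quadrant structure is invariant under the dihedral group $D_4$ acting on the image, because the four quadrants are permuted into each other. Restricting $G$ to the isotypic components splits the characteristic polynomial into smaller factors. The factors of multiplicity two are then expected to come from the two-dimensional irreducible representation of $D_4$. If these multiplicities do not match the stated ones, that signals an error in the matrix, so this serves as a second check.

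\emph{Galois groups.} For each cubic, irreducibility follows from the rational root test over the divisors of the constant term. I then compute the discriminant $-4a^3c+a^2b^2+18abc-4b^3-27c^2$ for $\lambda^3+a\lambda^2+b\lambda+c$ and compare with the stated values $7060$, $191700$ and $7727188$. None of these is a perfect square, which I confirm from their prime factorizations, so each Galois group is $S_3$. For the sextics of $N=8$, irreducibility over $\QQ$ makes the Galois group transitive. I then use Dedekind's theorem: reduce modulo small primes not dividing the discriminant and record the factorization patterns. A pattern $(5,1)$ gives a $5$-cycle, and a pattern $(2,1,1,1,1)$ gives a transposition. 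A transitive subgroup of $S_6$ that contains a transposition and a $5$-cycle is all of $S_6$. The degree-$10$ and degree-$16$ factors only need an irreducibility certificate: the factorizer output, or an irreducible reduction modulo a suitable prime.

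\emph{Main obstacle.} The main difficulty is not the algebra but making sure that $G$ is the right matrix, with the correct quadrant conventions and pixel indexing. Two independent cross-checks address this. First, the dense $G$ must agree with the action of ${\nR}^\top\nR$ computed through the butterflies. Second, the numerical eigenvalues of $G$ must match the roots of the factors. A secondary obstacle is finding primes with the needed cycle types for both sextics. I would simply scan primes up to a few hundred, which by Chebotarev succeeds quickly.
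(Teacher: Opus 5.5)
Your route is the paper's. Its proof is only an exact computation: the characteristic polynomial, its factorization over $\QQ$, the discriminants of the cubics, and the Galois groups of the factors of degree at most six. Your certificates are sound. The rational root test settles the cubics, and the discriminant formula does return $7060=2^2\cdot5\cdot353$, $191700=2^2\cdot3^3\cdot5^2\cdot71$ and $7727188=2^2\cdot1931797$, none of them a square. For the sextics, a $5$-cycle in a transitive group on six points makes the group $2$-transitive, hence primitive, so a transposition forces $S_6$.

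One of your sanity checks is false, and it would make you reject a correct matrix: the row sums of $G$ are not $4N^2$. The entry $G(p,p')$ counts lines containing both pixels \emph{inside the square}. So the row sum at $p$ is $\sum_{\ell\ni p}|\ell\cap[N]^2|$, and the lines of $E^n$ need only meet the square. For $N=4$ the corner $(0,0)$ has row sum $16+8+8+16=48$, not $64$. Quadrants \RN{2} and \RN{1} see it as the pixel $(3,0)$ of quadrant \RN{3}, whose four lines keep only $4,2,1,1$ pixels. Constant row sums $64$ would make $\mathbf 1$ an eigenvector with eigenvalue $64$, which is not a root of the stated polynomial; the largest root, from the last cubic, is about $54.6$. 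The trace check is correct and suffices: there are $4N$ lines through every pixel, so the trace is $4N^3=256$.

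The $D_4$ reduction is correct and goes beyond what the paper says, but your justification is incomplete. $D_4$ has eight elements and there are only four quadrants, so permuting the quadrants is not enough. You also need that the single-quadrant line family is invariant under the half-turn. This holds because $\delta_s(N-1-t)=s-\delta_s(t)$ by Lemma~\ref{lem:bitlinear}, so $D_n(h,s)$ is mapped to $D_n(N-1-h-s,s)$. The pixel maps underlying $T^n_{\RN{1}},\dots,T^n_{\RN{4}}$ are a quarter-turn, the row reflection, the identity and the anti-diagonal reflection. These represent the four cosets of $\{\mathrm{id},\text{half-turn}\}$ in $D_4$, so $G$ is the $D_4$-average of the single-quadrant Gram matrix and commutes with $D_4$.

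Counting pixel orbits gives the block sizes:
\begin{itemize}
\item For $N=4$: corners, centre and edge pixels give isotypic blocks of sizes $3,3,1,1$ and $4$ (the last appearing twice).
\item For $N=8$: four orbits of size $4$ on the diagonals and six regular orbits give blocks of sizes $10,10,6,6$ and $16$ (twice).
\end{itemize}
These are exactly the stated factor degrees and multiplicities. The two one-dimensional blocks at $N=4$ live on the eight edge pixels and give the eigenvalues $9$ and $13$ directly. For $N=8$ the statement therefore reduces to the irreducibility of each block polynomial. Since the irreducible representations of $D_4$ are rational, you can do this splitting over $\QQ$ before factoring.
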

The factorization, the discriminants and the Galois groups are computed in
exact integer arithmetic (characteristic polynomial and factorization over
$\QQ$, Galois groups of the factors of degree at most six), in a fraction of a
second for both sizes.

\begin{corollary}\label{cor:galois}
Let $\mathbb K$ be an abelian extension of $\QQ$, for instance a cyclotomic field
$\QQ(e^{2\pi\mathrm i/m})$, and let $Q$ be an invertible matrix with entries in
$\mathbb K$.  Then $Q^{-1}GQ$ is not diagonal for $N=4$.  In particular no product
of discrete Fourier, cosine, sine, Hartley or Walsh--Hadamard transforms (whose
entries, normalizations included, lie in cyclotomic fields) diagonalizes $G$,
the singular values of $\nR$ are not expressible by trigonometric functions of
rational multiples of $\pi$, and for $N=8$ some of them are not expressible by
radicals at all.
\end{corollary}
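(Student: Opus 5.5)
The corollary follows from Proposition~\ref{prop:galois} by Galois theory; I would not compute anything beyond the factorization already given.

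\emph{Step 1: diagonalization over $\mathbb K$ forces eigenvalues into $\mathbb K$.} Suppose $Q^{-1}GQ=\Lambda$ is diagonal with $Q$ invertible over $\mathbb K$. Then $\Lambda$ has entries in $\mathbb K$, because $Q^{-1}$ is computed from $Q$ by field operations. The diagonal entries of $\Lambda$ are exactly the eigenvalues of $G$, so every root of the characteristic polynomial lies in $\mathbb K$.

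\emph{Step 2: this contradicts Proposition~\ref{prop:galois}.} Take one of the cubics, say $\lambda^3-39\lambda^2+494\lambda-2036$, and let $K_f\subset\mathbb K$ be its splitting field. Because $\mathbb K/\QQ$ is abelian (and Galois), every subextension is Galois with abelian group, being a quotient of an abelian group. So $\operatorname{Gal}(K_f/\QQ)$ is abelian. By the proposition this group is $S_3$, which is non-abelian, a contradiction. If $\mathbb K$ is infinite abelian, the same argument works since $K_f$ is finite. This proves the first claim.

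\emph{Step 3: the consequences.}
\begin{enumerate}
\item[(a)] The entries of the DFT, DCT, DST, Hartley and Walsh--Hadamard matrices are of the form $e^{2\pi\mathrm i a/m}$, $\cos(\pi a/m)$, $\sin(\pi a/m)$, $\cos+\sin$, or $\pm1$, times normalizations like $1/\sqrt m$ or $\sqrt{2/m}$. All of these lie in a common cyclotomic field: $\sqrt m$ lies in $\QQ(\zeta_{4m})$, and $\sqrt2$ in $\QQ(\zeta_8)$. A finite product of such matrices, and of their inverses, therefore has entries in a single cyclotomic field, which is abelian, and Steps~1--2 apply.
\item[(b)] Values of trigonometric functions at rational multiples of $\pi$ lie in cyclotomic fields. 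If every singular value $\sigma_i$ of $\nR$ were such a value, then so would be every eigenvalue $\sigma_i^2$ of $G$, and all roots of the cubic would lie in an abelian field. This is again a contradiction. It suffices that the roots of one cubic are not so expressible.
\item[(c)] For $N=8$, $S_6$ is not solvable, so by Galois' theorem the roots of either sextic are not expressible by radicals. The corresponding singular values are the square roots of these roots. If such a singular value were expressible by radicals, its square would be too, so it is not.
\end{enumerate}

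\emph{Main obstacle.} The only delicate point is the bookkeeping in (a), namely that the normalizations keep all entries inside one cyclotomic field. The square roots $\sqrt m$ are covered by quadratic Gauss sums, which lie in $\QQ(\zeta_{4m})$. Everything else is standard, with the real input being the exact computation in Proposition~\ref{prop:galois}.
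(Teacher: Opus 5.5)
Your proof is correct and follows the paper's argument. A diagonalization over $\mathbb K$ puts the eigenvalues of $G$ in $\mathbb K$; every subextension of an abelian extension is Galois with abelian group, which is incompatible with the Galois group $S_3$ of the cubics; and non-solvability of $S_6$ handles $N=8$. The only difference is cosmetic: you work with the splitting field of a cubic, using that all its roots lie in $\mathbb K$, whereas the paper uses $\QQ(\lambda)$ for a single root. The paper's version excludes each such eigenvalue, and hence each corresponding singular value, individually from every abelian field, rather than only showing that not all of them lie in one.
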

\begin{proof}
If $Q^{-1}GQ=D$ is diagonal, then $D$ has entries in $\mathbb K$, so every eigenvalue
of $G$ lies in $\mathbb K$.  Let $\lambda$ be a root of one of the cubics of
Proposition~\ref{prop:galois}.  Every subfield of an abelian extension of $\QQ$
is a Galois extension of $\QQ$ with abelian group, so $\QQ(\lambda)/\QQ$ is Galois;
then the cubic, being irreducible with a root in $\QQ(\lambda)$, splits there, its
splitting field is $\QQ(\lambda)$, and its Galois group is the abelian group of
$\QQ(\lambda)/\QQ$, of order $3$.  This contradicts the Galois group $S_3$.
Singular values of $\nR$ are square roots of eigenvalues of $G$, so the same
argument excludes them from $\mathbb K$.  Finally $S_6$ is not solvable, so the roots
of the sextics for $N=8$ are not expressible by radicals.
\end{proof}

The obstruction therefore arises only from the composition of the levels,
each of which is diagonalized explicitly: the orthogonal projection onto the
range of the ADRT, and with it the Moore--Penrose inverse, cannot be obtained by
composing trigonometric transforms with explicit diagonal scalings, which is
why the inverses of this paper are oblique.

\section{Code}\label{app:code}
The code is available at \url{https://github.com/dsrim/adrt-pinv} and archived on
Zenodo \cite{Rim26code}.  The
experiments share the following modules: \texttt{adrtcore.py} (the ADRT and its
transpose), \texttt{pinv.py} (CGLS), \texttt{specpart.py} (dense spectral
partition), \texttt{fastspec.py} and \texttt{fasthybrid.py} (NumPy versions of
$L^n_p$ and of the hybrid), \texttt{fastops.py} (optimized engine),
\texttt{fastcpp.py} and \texttt{oblique.cpp} (C\texttt{++} row kernels used for all
wall times), and \texttt{lab\_frame.py}, \texttt{lab\_hybrid.py} (dense
laboratories).  The file \texttt{RESULTS.md} in the repository lists the script
behind each table, figure and numerical statement of this paper.

\end{document}